	\def\({}%
	\def\){}%
\newtheorem{theorem}{Theorem}[subsection] 
\newtheorem{lemma}[theorem]{Lemma}
\newtheorem{corollary}[theorem]{Corollary}
\newtheorem{definition}{Definition}[subsection]
\newtheorem{proposition}[theorem]{Proposition}
\newtheorem{remark}[theorem]{Remark}
\newtheorem{example}[theorem]{Example}
\newcommand{\C}{{\mathcal C}}
\newcommand{\F}{{\mathcal F}}
\title[Boundary Estimates For Guillemin Boundary PROBLEM]{Boundary Estimates For The Monge-Ampère Equation In The Polygons With Guillemin Boundary Conditions}
\author[M. Bayrami]{Masoud Bayrami}
\address[M. Bayrami]{Department of Mathematical Sciences, Sharif University of Technology, P.O. Box: 11365-9415, Tehran, Iran}
\email{masoud.bayrami1990@sharif.edu}
\author[R. Seyyedali]{Reza Seyyedali}
\address[R. Seyyedali]{School of Mathematics, Institute for Research in Fundamental Sciences (IPM), P.O. Box: 19395-5746, Tehran, Iran}
\email{rseyyedali@ipm.ir}
\author[M. Talebi]{Mohammad Talebi}
\address[M. Talebi]{School of Mathematics, Institute for Research in Fundamental Sciences (IPM), P.O. Box: 19395-5746, Tehran, Iran}
\email{mohammadtalebi@ipm.ir}
\date{\today}
\begin{document}

\begin{abstract}
We establish a Schauder-type boundary regularity result for a two-dimensional singular Monge--Amp\`ere equation on convex polytopes subject to the Guillemin boundary condition. Our result extends the work of Rubin and Huang to the case where the right-hand side is merely H\"older continuous. In particular, we obtain Euclidean \(C^{1,\alpha/2}\) regularity up to the boundary, including along edges and at the vertices of the polytope, and then refine this to the sharp Euclidean \(C^{1,\alpha}\) regularity. The analysis combines techniques introduced by Donaldson in his study of the Abreu equation with refined blow-up and localization arguments adapted to the degenerate boundary geometry.
\end{abstract}

\keywords{Monge-Amp\'ere equation, Guillemin boundary condition, polytope domain, Schauder-type estimates, Boundary regularity.}

\subjclass[2020]{35J96, 35J75.}

\maketitle

\setcounter{tocdepth}{1}
\tableofcontents

\section{Introduction}
	
One of the fundamental questions in complex geometry concerns the existence and uniqueness of canonical metrics on compact Kähler manifolds, which include Kähler-Einstein metrics, constant scalar curvature Kähler metrics, and extremal metrics. In his early work, \cite{calabi1954the, calabi1957kahler}, Calabi shows that finding Kähler-Einstein metrics is equivalent to solving a complex Monge-Ampère equation. In later works, he introduces the concept of extremal metrics, which satisfy higher-order fully nonlinear elliptic equations (\cite{zbMATH03766648, zbMATH03917291}). Yau, in his seminal work \cite{Yau1978}, solves the complex Monge-Ampère equation with a smooth right-hand side on compact K\"ahler manifolds. As a result, he proves the existence of K\"ahler-Ricci flat metrics on compact manifolds with trivial canonical class. The existence problem for Kähler-Einstein metrics has been completely resolved  through the work of Yau, Aubin, Chen-Donaldson-Sun, and Tian, \cite{Yau1978, Aubin1978, ChenDonaldsonSun2015I, ChenDonaldsonSun2015II, ChenDonaldsonSun2015III, Tian1997}. However, the existence of constant scalar curvature K\"ahler metrics (cscK) and extremal metrics remains largely open. Even in the case of the complex Monge-Ampère equation, many questions regarding the regularity of solutions are still open. The celebrated result of Yau guarantees the existence of smooth solutions to the complex Monge-Ampère equation with a smooth right-hand side. However, for a less regular right-hand side, the regularity of solutions remains widely open. Most of these problems are highly challenging and far-reaching. One reason is that the regularity theory for the complex Monge-Ampère equation is inherently difficult and is still not well understood. For further information, we refer to references such as  \cite{Guedj2012, Kolodziej2005, GuedjZeriahi2005, Kolodziej2008}. On the other hand, the real Monge-Ampère equation has been extensively studied for decades. The regularity theory of the real Monge-Ampère equation is investigated in early works by Heinz, Pogorelov, Nirenberg, and others. References such as \cite{Heinz1959, Pogorelov1978, Nirenberg1953, MR4720871, zbMATH01614149, zbMATH06669881, zbMATH07788545} provide well-established results on the real Monge–Ampère equation. 
Caffarelli, in his seminal work \cite{zbMATH04154994}, provides a comprehensive analysis of the real Monge–Ampère equation from the perspective of regularity theory. His subsequent paper \cite{caffarelli1990localization} introduces the powerful localization technique, further advancing the understanding of interior regularity for solutions.
	
For toric Kähler manifolds, the existence problem for canonical metrics reduces to solving real Monge-Ampère or linearized Monge-Ampère equations. Therefore, it is natural to apply techniques from the real Monge-Ampère theory to establish the existence and regularity of canonical metrics on toric manifolds. Let \((X, \omega)\) be a compact toric manifold of dimension \( n \). The complex torus \((\mathbb{C}^*)^n\) acts on \(X\) with a dense free orbit. The action of the compact torus \((\mathbb{C}^*)^n\) on \((X, \omega)\) is Hamiltonian, which induces a moment map  
\(
\mu: X \to \mathbb{R}^n
\) 
that is unique up to translations. By a theorem of Atiyah-Bott \cite{atiyah1984moment} and Guillemin-Sternberg \cite{guillemin1982convexity}, the image \(P:=\mu(X) \subset \mathbb{R}^n\) is a Delzant polytope, independent of the choice of \(\omega \in [\omega]\) (see also \cite{guillemin-moment}). Furthermore, Guillemin \cite{guillemin1994kaehler} establishes the following theorem.
	
\begin{theorem}
\label{guillemin-result}
Let $l_1, \cdots, l_N$ be affine linear functions on $\mathbb{R}^n$ such that \begin{equation}
\label{polytope-set-def}
P=\{x \in \mathbb{R}^n \, : \, l_i(x) >0, \, i=1,\cdots, N \}.
\end{equation}
Then any toric K\"ahler metric on $(X,[\omega])$ corresponds to a strictly convex function $u:P \to \mathbb{R}$ that satisfies the following boundary condition:
\begin{equation*}
u - \sum_{i=1}^{N} l_i \log l_i \in C^{\infty}(\overline{P}),
\end{equation*}
which is called the "Guillemin boundary condition".
\end{theorem}
In the work of Abreu \cite{abreu1998kahler}, it is shown than the scalar curvature of the toric metric corresponding to a convex \(u\) is given by the following fourth-order formula:
\begin{equation}
\label{Abreu-eq-4th}
S(u)=-\sum u^{ij}_{ij}
\end{equation}
where $(u^{ij})$ is the inverse of the Hessian matrix $D^2u=(u_{ij})$.
For a prescribed $S=S(u)$, the above equation is called the Abreu equation, and \eqref{Abreu-eq-4th} can be written as a system of two second-order elliptic equations:
\begin{equation}
\label{Abreu-2-eq-v}
\begin{cases}
\det D^2 u =\varphi^{-1} \\
U^{ij}\varphi_{ij}=S(u),
\end{cases}
\end{equation}
where the latter one is the linearized Monge-Amp\'ere equation (studied by Caffarelli and Gutierrez in \cite{MR1439555}) and $U^{ij}$ denotes the co-factor matrix of the Hessian of $u$.
	
In a series of papers, \cite{MR1988506, donaldson2005interior, donaldson2008extremal, donaldson2009constant}, Donaldson studies the Abreu equation. His main motivation is to prove the existence of cscK metrics and extremal metrics on toric manifolds. In dimension \( n = 2 \), he proves the existence of cscK metrics on $K$-stable toric surfaces. To do this, he proves the regularity of the solutions of the Abreu equation where the function $S$ is constant. In \cite{MR3886172}, Chen, Li, and Sheng extend this result to the case of extremal K\"ahler metrics for toric surfaces which are the solutions of the Abreu equation with an affine linear function $S$.
The regularity of the solutions  for a  general function $S$ in any dimension remains widely open.
	
On the other hand, in a breakthrough work \cite{MR4301557, MR4301558}, Chen and Cheng prove a priori estimates for the solutions of cscK metrics (and more generally for bounded scalar curvature) on compact K\"ahler manifolds. A consequence of their results  is the existence of cscK metrics on compact toric manifolds of arbitrary dimension under a uniform stability condition. 
	
In general, the Abreu equation is quite complicated for a given function \( S \). The purpose of this paper is to focus on the first equation of the system version of the Abreu equation \eqref{Abreu-2-eq-v}, when the right-hand side function \( \varphi \) is specified. This particular question has been explored by Rubin in \cite{rubin2015monge} and Huang in \cite{huang2023guillemin} for the regular right-hand side function:
\begin{equation}
\label{num-den}
\varphi(x) = \frac{l_1(x)  \cdots  l_N(x)}{ H(x)}, \qquad \text{with} \quad H \in C^{\infty}(\overline{P}).
\end{equation}
Later, Huang and Shen, in \cite{huang2023monge}, extend this result to higher dimensions. Their approach is based on a purely PDE perspective.
	
The main goal of this paper is to study the case  where \( H \) is  only H\"older continuous. Due to some technicalities, we focus on the case of dimension \( n = 2 \). The higher-dimensional cases (\( n \geq 3 \)) may be explored in future research. 
	
Let $P \subset \mathbb{R}^n$ be the polytope defined by \eqref{polytope-set-def} and let $u: P \to \mathbb{R}$ be a strictly convex function that satisfies the real Monge-Amp\'ere equation and the Guillemin boundary condition, which is given by:
\begin{equation}
\label{MA-0000}
\begin{cases}
\det D^2 u=\dfrac{H}{l_1 \cdots l_N} \qquad & \text{in }P,\\
u - \displaystyle\sum_{i=1}^{N} l_i \log l_i \in C^{\infty}(\overline{P}).
\end{cases}
\end{equation}
	
The equation in \eqref{MA-0000} is singular along \( \partial P \). The main difficulty in solving \eqref{MA-0000} is to establish boundary regularity for
\(
u(x)-\sum_{i=1}^{N} l_i(x)\log l_i(x)
\)
by means of estimates that are uniform in \(u\). More precisely, our goal is to prove the following a priori bound:
\[
\left\|u-\sum_{i=1}^{N} l_i\log l_i\right\|_{C^{1,\alpha}(\overline{P})}\le C,
\]
where \(C\) depends only on the data of the problem—namely, the polytope \(P\), the right-hand side \(H\), and \(\|u\|_{C^0(\overline{P})}\)—and is otherwise independent of \(u\). We obtain this estimate in two steps. First, we derive a \(C^{1,\alpha/2}\) bound (see the proof of Theorem~\ref{Low-regularity} in Section~\ref{section--4}). Then, in Section~\ref{sec:boundary-regularity-completion}, we upgrade this regularity to the desired \(C^{1,\alpha}\) norm and also we prove that the second derivatives of
\(
u-\sum_{i=1}^{N} l_i\log l_i
\)
exhibit the optimal rate of blow-up near the boundary (see Theorem~\ref{main-theorem}).

First, it is known that the problem \eqref{MA-0000} is only well-defined for simple polytopes (see \cite[Proposition 2.1]{huang2023monge}). This issue only occurs in higher dimensions, specifically for \( n \geq 3 \), since all convex polygons are simple when in dimension \(  n=2 \).
Another important issue is that the boundary condition in equation \eqref{MA-0000} is unclear without explicitly prescribed boundary data. Explicit boundary values can only be specified at the vertices (corners) of \( P \). However, if the equation is restricted to the \((n-k)\)-face of \( P \), for \( k = 1, \cdots, n-1 \), the boundary values must satisfy a Monge-Ampère equation with Guillemin boundary conditions on this \((n-k)\)-face (see \cite[Lemma 2.2]{huang2023monge}). Therefore, the boundary condition for the problem \eqref{MA-0000} can be constructed by solving the Guillemin boundary value problem inductively, from edges to the \((n-1)\)-face of \( P \). In other words, the solvability of the problem \eqref{MA-0000} in dimension \( n \) depends on the solvability of these equations in dimensions \( 1, 2, \cdots, n-1 \). This discussion suggests that the Guillemin boundary value problem can be viewed as a Dirichlet boundary value problem with prescribed boundary data when the values of the solution at the vertices of \( P \) are specified.
	
From now on, we assume that \( H \) is a continuous function that satisfies  
\(
0 < a \leq H(x_1, x_2) \leq A
\),
for some constants \( a \) and \( A \).
The following result has been proven by Rubin in \cite{rubin2015monge} and by Huang in \cite{huang2023guillemin}. We will explain the details in Section~\ref{section--1}.

\begin{theorem}
\label{known-regular-solution}
Let $v_1,\cdots, v_N$ be the vertices of the polytope $P \subset \mathbb{R}^2$ and $a_1, \cdots, a_N$ be some given real numbers. If $H$ is smooth up to the boundary and satisfies a compatibility condition \eqref{eqeqeq-1}, to be specified later in Theorem \ref{Rubinthm1.1}; then there is a unique classical solution $u$ to the equation \eqref{MA-0000} with $u(v_i)=a_i$.
Moreover, the Guillemin boundary condition in \eqref{MA-0000} is satisfied uniformly, that is, $u- \sum_{i=1}^{N} l_i \log l_i$ is smooth up to the boundary, $\partial P$, in a uniform fashion with respect to $u$. In other words there is an a priori estimate.
\end{theorem}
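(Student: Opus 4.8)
The plan is to reduce \eqref{MA-0000} to a Dirichlet problem with explicitly constructed boundary data, settle existence and uniqueness for that problem, and then derive the uniform estimate by transferring the equation to the K\"ahler side, where the singularity disappears. \emph{Step 1 (boundary data).} I would first build the boundary values of $u$ one edge at a time. Let $e$ be an edge of $P$ joining vertices $v,v'$, parametrized by $t\in[0,L]$ with $t=0$ at $v$ and $t=L$ at $v'$. The inductive structure of the Guillemin problem forces $u|_e$ to solve the one-dimensional singular problem
\begin{equation*}
(u|_e)''(t)=\frac{c(t)}{t(L-t)},\qquad u|_e-t\log t-(L-t)\log(L-t)\in C^\infty([0,L]),
\end{equation*}
with $c$ smooth and positive, determined by $H$. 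Integrating twice, this has a solution that is unique once the two endpoint values are prescribed, depends in a controlled way on $H$ and those values, and automatically exhibits the $t\log t$ behaviour. Performing this on every edge with the normalization $u(v_i)=a_i$ produces boundary data $g$ on $\partial P$ that is continuous, smooth in the relative interior of each edge, compatible at the vertices, and Guillemin along each edge; then \eqref{MA-0000} becomes the Dirichlet problem $\det D^2 u=H/(l_1\cdots l_N)$ in $P$, $u=g$ on $\partial P$.

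\emph{Step 2 (existence and uniqueness).} Uniqueness is the comparison principle for the Monge--Amp\`ere operator: two bounded convex Aleksandrov solutions with identical right-hand side and boundary values coincide. For existence I would run the method of continuity, deforming from an explicit model (built from $\sum_i l_i\log l_i$, whose Monge--Amp\`ere is smooth and positive) to the target right-hand side and vertex data; here the compatibility condition \eqref{eqeqeq-1} guarantees the starting point is genuinely admissible. Openness comes from linear elliptic theory for the linearized operator $U^{ij}\partial_{ij}$ in the interior, together with its mapping properties in the weighted function spaces adapted to the Guillemin singularity (Step 3), and closedness is precisely the a priori estimate asserted in the theorem.

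\emph{Step 3 (the uniform a priori estimate --- the crux).} Interior bounds are classical: on compact subsets of $P$ the right-hand side is smooth and pinched between positive constants, so Pogorelov--Caffarelli interior estimates plus Schauder bootstrapping give $C^\infty_{\mathrm{loc}}(P)$ bounds depending only on $\|u\|_{C^0(\overline P)}$ and the distance to $\partial P$. The boundary estimates --- where the methods of Rubin, Huang and Donaldson enter --- I would obtain by working locally and passing to the Legendre transform $u^\ast$ of $u$ in the dual variables $\xi=Du$. Geometrically $u^\ast$ is a K\"ahler potential on a chart of the toric surface $X$ adapted to the edge, respectively the vertex, under consideration: an edge of $P$ is sent to infinity in the conormal $\xi$-direction and corresponds to a toric divisor $\{z_i=0\}$, while a vertex is sent to infinity in two directions and corresponds to a torus-fixed point $z=0$. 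In the holomorphic coordinates of such a chart the singular equation $\det D^2 u=H/(l_1\cdots l_N)$ becomes the non-degenerate complex Monge--Amp\`ere equation $\det\!\big(\partial^2 u^\ast/\partial z_i\partial\bar z_j\big)=F$ with $F$ smooth and bounded above and below by constants depending only on $H$, and the Guillemin condition on $u$ translates precisely into smoothness of $u^\ast$ up to $\{z_i=0\}$, respectively near $z=0$. To this equation one applies the a priori estimates for the complex Monge--Amp\`ere equation (Yau's $C^0$--$C^2$ estimates, Evans--Krylov, and Schauder, together with the Caffarelli--Nirenberg--Spruck theory at the divisors), obtaining uniform bounds for $u^\ast$; transferring back yields the estimate for $u-\sum_i l_i\log l_i$ near the edge or vertex. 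A partition of unity then patches the interior, edge and vertex estimates; uniformity in $u$ is inherited from the uniformity of the local normalizations, guaranteed by the $C^0$ bound on $u$ and the controlled boundary data from Step 1.

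\emph{Main obstacle.} I expect the vertex (corner) analysis to be the real difficulty: there the singularity is genuinely two-dimensional, the adapted chart on $X$ meets two toric divisors at once, and one must control the passage to and from the Legendre transform on an unbounded region while keeping all constants uniform in $u$. Assembling the edge and vertex estimates into a single uniform $C^{1,\alpha}(\overline P)$ --- in fact $C^\infty(\overline P)$, since $H$ is smooth --- bound for $u-\sum_i l_i\log l_i$ is the technical heart of the argument.
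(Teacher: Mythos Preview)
Your Step~1 (recovering the Dirichlet data from edge ODEs and vertex values) is exactly what Rubin does, and your Step~2 is a reasonable variant of his Perron/Cheng--Yau existence scheme. The divergence is in Step~3. Rubin and Huang never pass to the full Legendre dual or to the complex Monge--Amp\`ere equation on a toric surface. Near an edge Rubin applies the \emph{partial} Legendre transform in the single transverse variable; this converts the real Monge--Amp\`ere equation into a quasi-linear equation whose linearization is the Keldysh operator $\phi_{xx}+y\phi_{yy}$, and the edge regularity then comes from perturbation off the model $\det D^2u=1/y$ together with expansions for this degenerate linear problem. At the vertices Huang does not transform at all: he builds explicit sub- and super-barriers of the form $\pm A x_2 \mp B x_2^{1+\alpha}+x_1\log x_1+x_2\log x_2+v(x_1,0)$ and uses the comparison principle to obtain Lipschitz (and then higher) control of $u-\sum l_i\log l_i$ up to the corner. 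The whole argument stays on the real side and relies on differentiating the equation, which is why it needs $H$ smooth.

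Your alternative route via the K\"ahler side has a genuine gap as written. You invoke ``Yau's $C^0$--$C^2$ estimates'' and ``Caffarelli--Nirenberg--Spruck at the divisors'' on a local chart, but Yau's second-derivative estimate is a global maximum-principle argument on a \emph{compact} manifold, and CNS is a boundary theory for real Monge--Amp\`ere with Dirichlet data --- neither applies to a polydisc chart where the divisor sits in the interior. What you actually need on such a chart is an interior lower bound for the complex Hessian of $u^\ast$, and that is equivalent (after undoing the Legendre transform) to the very boundary $C^2$ control on $u$ you are trying to prove; so the local argument is circular. A clean global version of your idea does work, but only when $P$ is Delzant: then $X$ is a smooth compact toric surface, the equation becomes $\omega_\phi^2=e^f\omega_0^2$ with $f$ smooth (this is precisely where the compatibility condition \eqref{eqeqeq-1} enters), and Yau's theorem gives the uniform estimate directly --- no edge/vertex dichotomy, no partition of unity, and the ``main obstacle'' you anticipate at the corners disappears since those are smooth torus-fixed points of $X$. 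The statement here, however, is for an arbitrary convex polygon with unit (not integral) normals, so there is no compact $X$ to appeal to, and one is forced back to the real PDE techniques that Rubin and Huang actually use.
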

	
In the first step, Rubin proves the existence issue of generalized solutions by adapting 
the method developed in \cite{MR437805}. He further proves that $u \in C^{0,\alpha} (\overline{P})$ for some $\alpha \in (0,1)$ and satisfies $ \|u\|_{C^{0,\alpha}(\overline{P})}\leq C $, for some positive constant  $C=C(P,\|u\|_{C^0 (\overline{P})}, a, A).$
	
A key observation in Rubin's work is that, if a solution \( u \) exists, its restriction to the edges \( e_i = \{ x \in \mathbb{R}^2 : l_i(x) = 0 \} \) satisfies a second-order ODE along the edge. This, along with the specified values of \( u \) at the vertices of the polygon, completely determines the restriction of \( u \) to the boundary of the polytope. Therefore, \( u \) can be obtained by solving the Monge–Ampère equation with the given Dirichlet boundary condition.
	
The main goal of this paper is to establish an analogous to Theorem \ref{known-regular-solution}, but under weaker regularity assumptions on the function \(H\). Our approach is based on deriving an a priori estimate for the solution \(u\).
We establish an a priori estimate comparing the Hessian of a solution $u$ with the model potential $\mathbf{u}_0$, where
\begin{equation}
\label{u_0-func}
\mathbf{u}_0(x)=\sum_{i=1}^{N} l_i(x)\log l_i(x).
\end{equation}
Additionally, we provide a Schauder-type estimate for
\begin{equation}
\label{reminder-eq}
v:= u-\mathbf{u}_0=u - \sum_{i=1}^{N} l_i(x)\log l_i(x),
\end{equation}
to characterize the behavior of $u$ near the edges and vertices of the polytope $P$, where the Monge–Ampère equation becomes degenerate. This estimate confirms that $D^2u$ is uniformly equivalent to $D^2\mathbf{u}_0$, ensuring uniform control of the solution's boundary regularity. We now present the main a priori estimate of this paper:

\begin{theorem}[Main Result]
\label{main-theorem}
Let \(u\) be a solution to \eqref{MA-0000}. There exists a positive constant \(C\), depending only on \(a\), \(A\), \(\|H\|_{C^{0,\alpha}(\overline{P})}\), \(\alpha\), and \(\|u\|_{C^0(\overline{P})}\), such that the following matrix inequality holds in the sense of symmetric matrices:
\begin{equation}
\label{eq-EQ-00}
C^{-1}D^2\mathbf{u}_0 \le D^2u \le C D^2\mathbf{u}_0
\qquad \text{in } P,
\end{equation}
where \(\mathbf{u}_0\) is the function defined in \eqref{u_0-func}. Furthermore, the function $v$ given by \eqref{reminder-eq} satisfies the regularity estimate
\begin{equation}
\label{eq-EQ-01}
\|v\|_{C^{1,\alpha}(\overline{P})} \le C, 
\end{equation}
as well as the pointwise bound
\begin{equation}
\label{eq-EQ-02}
\mathrm{dist}(x, \partial P)^{-\alpha+1} |D^2v(x)| \le C,
\end{equation}
where we assume $H$ is sufficiently smooth to guarantee that $D^2v$ exists pointwise a.e. for the validity of \eqref{eq-EQ-02}.
\end{theorem}

The uniform ellipticity-type estimate \eqref{eq-EQ-00} allows us to extend the results of Rubin and Huang to the case where the right-hand side \(H\) is only Hölder continuous. Consequently, our a priori estimate enables us to establish Theorem \ref{known-regular-solution} under these weaker assumptions.

\begin{theorem}[Weaker version of Main Result]
\label{Low-regularity}
Let \(v_1,\dots,v_N\) be the vertices of the polytope \(P\subset\mathbb{R}^2\), and let \(a_1,\dots,a_N\) be prescribed real numbers. Suppose that \(H\) is Hölder continuous up to the boundary with exponent \(\alpha\) and satisfies the compatibility condition \eqref{eqeqeq-1} (specified in Theorem~\ref{Rubinthm1.1}). 

Then there exists a unique classical solution \(u\) of \eqref{MA-0000} satisfying \(u(v_i)=a_i\). Moreover, \(u \in C^{2,\alpha}(P)\) and \(u\) satisfies the "weak Guillemin boundary condition":
\begin{equation}
\label{weak-Guillemin-bdr-c}
u-\sum_{i=1}^{N} l_i\log l_i \in C^{1,\alpha/2}(\overline{P}).
\end{equation}
\end{theorem}

In contrast of the weak Guillemin boundary estimate \eqref{weak-Guillemin-bdr-c} in Theorem~\ref{Low-regularity}, Theorem~\ref{main-theorem} yields the sharper boundary regularity
\(
v=u-\sum_{i=1}^{N} l_i\log l_i \in C^{1,\alpha}(\overline P)
\). For the sharpness, see Example~\ref{one-dim-ex}. Regarding regularity, the improvement from an exponent of $\alpha/2$ to $\alpha$ is not the most demanding aspect of our result. The primary challenge actually lies in establishing \eqref{weak-Guillemin-bdr-c}.

Before outlining the proofs of Theorems~\ref{main-theorem} and \ref{Low-regularity}, we place our results in the context of the existing literature. The Dirichlet problem for the Monge--Ampère equation was studied by Caffarelli, Nirenberg, and Spruck in \cite{zbMATH03964498}. They prove that, if $\Omega$ is a smooth bounded uniformly convex domain, then the homogeneous Dirichlet problem
\[
\begin{cases}
\det D^2u=f \quad & \text{in } \Omega, \\
u=0 \quad & \text{on } \partial \Omega,
\end{cases}
\]
admits a unique solution $u \in C^\infty(\overline{\Omega})$, provided that $0<c\le f \in C^\infty(\overline{\Omega})$ for some constant $c>0$. As part of their argument, they obtain \textit{a priori} estimates
\[
\|u\|_{C^{k+2,\alpha}(\overline{\Omega})}
\le C
=
C\bigl(k,\alpha,\Omega,\|f\|_{C^{k,\alpha}(\overline{\Omega})},\inf_{\overline{\Omega}} f\bigr),
\]
for every integer $k\ge 2$ and $\alpha\in(0,1)$. A central step in their proof is to bound $D^2u$ up to the boundary. Owing to the boundary condition and the uniform convexity of $\Omega$, the tangential second derivatives are easily controlled; the equation then reduces the problem to estimating the mixed second derivatives. To accomplish this, they differentiate the equation once and construct delicate barrier functions to control the mixed terms. In this way, they establish uniform ellipticity up to the boundary. Higher regularity then follows from the classical Schauder theory.

Later, Wang observes in \cite{zbMATH00870152} that Lipschitz continuity of $f$ is sufficient to bound the second derivatives up to the boundary. If, however, $f$ is only Hölder continuous, the boundary regularity problem becomes substantially more subtle. This question remained open for nearly two decades until the work of Trudinger and Wang \cite{zbMATH05578708}, who used localization techniques to control the geometry of boundary sections. By means of a sophisticated argument, they prove that $u\in C^{2,\alpha}(\overline{\Omega})$ whenever $0<c\le f\in C^{0,\alpha}(\overline{\Omega})$. In \cite{zbMATH06168117}, Savin further weakens the geometric assumptions by replacing uniform convexity of the boundary with a quadratic separation condition.

Two main difficulties arise in the study of \eqref{MA-0000}. First, the polygon $P$ is neither smooth nor uniformly convex. Consequently, unlike in the smooth uniformly convex setting, one does not have an \textit{a priori} gradient bound for $u$. Nevertheless, Rubin showed that the solution is Hölder continuous up to the boundary, with Hölder exponent depending on $P$; see, for instance, \cite{han2025globalc1alpharegularitymongeampere} and the references therein for recent developments. Second, the equation itself is singular near the boundary, and in particular the gradient of the solution blows up as one approaches $\partial P$.

The strategy in \cite{rubin2015monge}, \cite{huang2023guillemin}, and \cite{huang2023monge} is based on the linearization of the Monge--Ampère equation together with the construction of suitable barrier functions to control the second derivatives, modulo the explicit singular terms of the form $x\log x$, up to the boundary. In this respect, their approach parallels that of Caffarelli--Nirenberg--Spruck \cite{zbMATH03964498}. However, the toric setting presents additional difficulties that require new ideas. Most importantly, their method depends strongly on differentiability of the right-hand side. When the right-hand side is only Hölder continuous, one can no longer differentiate the equation, and it is natural to look instead to the ideas of Trudinger--Wang \cite{zbMATH05578708} and Savin \cite{zbMATH06168117}.

A natural strategy would therefore be to localize the equation near the boundary and study the corresponding boundary sections. In the present setting, however, this approach breaks down because the gradient of $u$ diverges at the boundary. To overcome this obstacle, we draw on ideas introduced by Donaldson in his study of the Abreu equation (\cite{donaldson2009constant}). Roughly speaking, we show that the geometry of the sections can be controlled uniformly near the boundary, independently of the distance to $\partial P$. As a consequence, we obtain the uniform ellipticity estimate
\[
C^{-1}D^2\mathbf{u}_0 \le D^2u \le C D^2\mathbf{u}_0
\qquad \text{in } P.
\]
Even with this estimate in hand, establishing the Guillemin boundary condition—namely, proving that $u-\mathbf{u}_0 \in C^{1,\alpha}(\overline{P})$—remains  extremely delicate.

The argument for the Guillemin boundary condition differs substantially between edges and vertices. Near an edge, we apply a partial Legendre transform and derive Schauder estimates for the resulting quasilinear equation in the Legendre plane. At a vertex, by contrast, the argument is more sophisticated; see Proposition~\ref{prop:vertex-sharp}. The main difficulty is that Huang’s barrier construction depends crucially on $C^2$ regularity of the right-hand side, and therefore does not directly apply in the Hölder setting. To overcome this issue, we adapt the method of Trudinger and Wang. More precisely, in Lemma~\ref{lem:product}, We show that Huang’s barrier argument continues to work for Hölder continuous functions of separated form,
\[
H(x_1,x_2)=F(x_1)G(x_2).
\]
We then approximate a general Hölder continuous function $H$ by a family of functions $H_\epsilon$ satisfying this separation property in a neighborhood of the vertex.

Once the weak Guillemin boundary condition \eqref{weak-Guillemin-bdr-c} has been established, we show that the associated complex Monge--Ampère equation has H\"older continuous right-hand side. Then, we apply the regularity theory for the complex Monge--Amp\`ere equation (c.f. \cite{TWWY}) to conclude that the complex potential is  $C^{2,\alpha}$. Using this we can control the blow-up rate of the second derivatives of $v$ and improve the H\"older exponent of $D v$; see Proposition~\ref{por:edge-C1a} and Proposition~\ref{por:vertex-C1a}.

The paper is organized as follows, and the proof of Theorem~\ref{main-theorem} proceeds in several stages.

In section~\ref{section--1}, we begin by collecting a number of preliminary results concerning convex solutions, sections of solutions to the Monge--Amp\`ere equation, and the geometry associated with the Guillemin potential. We also review results of Rubin and Huang that will be used throughout the proof. We first establish the interior estimate, Theorem~\ref{somehow-standard}. In dimension $n=2$, a classical theorem of Heinz yields a bound on the modulus of strict convexity in the interior. For the reader's convenience, we provide a self-contained proof; see also \cite{Chen2014}.

The main part of the paper is devoted to the boundary analysis, which is divided into several steps. In Section~\ref{section--2}, we derive an a priori estimate near an edge, away from the vertices (Proposition~\ref{proposition-edges}). In Section~\ref{section--3}, we develop the blow-up and localization arguments needed for the local analysis near a vertex. Section~\ref{section--5} is devoted to the remaining vertex analysis, where we obtain the corresponding a priori estimates using the approximation procedure of Trudinger and Wang \cite{zbMATH05578708}. In Section~\ref{section--4}, we prove the low-regularity boundary estimate (Theorem~\ref{Low-regularity}). We then establish the sharp boundary regularity in Section~\ref{sec:boundary-regularity-completion}, where the regularity exponent is improved to the $C^{1,\alpha}$ norm and finally complete the proof of Theorem~\ref{main-theorem}.

To preserve the flow of the main arguments, the technical estimates and auxiliary computations utilized in the boundary analysis are relegated to Appendices~\ref{app:Schauder} and~\ref{sec:calc}.

Our approach may also be adaptable to higher dimensions and to more general equations, including the complex Monge--Amp\`ere equation on toric manifolds.

\section{Preliminaries}
\label{section--1}
	
Let $P \subset \mathbb{R}^n$ be the polytope defined by \eqref{polytope-set-def} and let $u: P \to \mathbb{R}$ be a strictly convex function satisfying the real Monge-Amp\'ere equation and the Guillemin boundary condition in \eqref{MA-0000}. We assume that $H$ is continuous and satisfies
$ 0 < a \leq H(x_1, x_2) \leq A $,
for constants $a$ and $A$.
We focus on the case of dimension $n=2$. 
Let \( n_i = (n_i^x, n_i^y) \) be the unit inward-pointing normal to the edge \( e_i = \{ x \in \mathbb{R}^2 : l_i(x) = 0 \} \) of the polytope \( P \), with vertices \( v_i \) for \( 1 \leq i \leq N \), where \( v_i \) is the intersection of the edges \( e_{i-1} \) and \( e_i \). Note that we use the notations \( l_0 = l_N \), \( e_0 = e_N \), and \( v_0 = v_N \).
Set \( l_i(x) = n_i \cdot x - \lambda_i \), where \( n_i \) is the unit normal vector, and let \( T_i \) be the tangent vector to the face, which is the result of a \( -90^\circ \) rotation of \( n_i \). Any two vectors \( n_i \) and \( n_k \) define a matrix \( n_i^{\alpha} n_k^{\beta} \), the determinant of which gives the area of the parallelogram spanned by \( n_i \) and \( n_k \).
Now, let \( u: P \to \mathbb{R} \) be a function that satisfies equation \eqref{MA-0000}. Given arbitrary real numbers \( a_1, \cdots, a_N \), we seek to find a function \( u \) such that \( u(v_i) = a_i \) for each \( i \).
Rubin, in \cite{rubin2015monge}, proved the existence of a unique solution that is globally \( C^{0, \alpha} \) and \( C^{\infty} \) at all points up to the boundary, except at the vertices of the polytope. The H\"older exponent \( \alpha \) depends only on the number of vertices of the polytope.
	
\begin{theorem}[Rubin \cite{rubin2015monge}]
\label{Rubinthm1.1}
Let $P$ be a convex polytope in $\mathbb{R}^2$, and consider the problem \eqref{MA-0000},
where $0<H \in C^{\infty}(\overline{P})$.
		
\begin{enumerate}
\item[(i)]
If the equation \eqref{MA-0000} admits a solution $u$ that is convex in $P$ and satisfies the Guillemin boundary condition in \eqref{MA-0000}, then the given function $H(x)$ must satisfy
\begin{equation}
\label{eqeqeq-1}
H(v_i)= \left( n_{i-1}^x n_i^y - n_{i}^x n_{i-1}^y \right)^2 \prod_{j \neq i-1, i} l_j(v_i).
\end{equation}
\item[(ii)]
Conversely, assume that the given function $H(x)$ satisfies \eqref{eqeqeq-1}. Then there exists $\alpha > 0$ depending only on $N$ such that for each choice of values $\{a_i\}_{i=1}^N$, $a_i \in \mathbb{R}$, there is a unique solution $u \in C^{0,\alpha}(\overline{P})$ to the equation in \eqref{MA-0000}, satisfying the following boundary condition:
\begin{equation}
\label{eqeqeq-2}
u - \sum_{i=1}^{N} l_i \log l_i \in C^{\infty} \left(\overline{P} \setminus \{v_1, \cdots, v_N\} \right),
\end{equation}
and $u(v_i)=a_i$.
\end{enumerate}
\end{theorem}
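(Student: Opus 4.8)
The plan is to treat the two assertions separately: part~(i) is a local computation at the vertices, while part~(ii) reduces the Guillemin problem to a singular Dirichlet problem. \emph{For part (i)}, suppose $u$ is a convex solution obeying the Guillemin condition and set $v:=u-\sum_{j=1}^N l_j\log l_j\in C^{\infty}(\overline{P})$. Since each $l_j$ is affine, $D^2(l_j\log l_j)=l_j^{-1}\,n_j\otimes n_j$, hence $D^2u=D^2v+\sum_{j=1}^N l_j^{-1}\,n_j\otimes n_j$. Near the vertex $v_i$ only $l_{i-1}$ and $l_i$ vanish (the other $l_j$ stay bounded away from $0$), so $D^2u=l_{i-1}^{-1}\,n_{i-1}\otimes n_{i-1}+l_i^{-1}\,n_i\otimes n_i+B$ with $B$ bounded there. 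Expanding the $2\times 2$ determinant, using that rank-one matrices are singular and that $\det(a\,n\otimes n+b\,m\otimes m)=ab\,(n^x m^y-n^y m^x)^2$, one gets $l_{i-1}l_i\det D^2u=(n_{i-1}^x n_i^y-n_i^x n_{i-1}^y)^2+O(l_{i-1})+O(l_i)$ near $v_i$. As the left-hand side also equals $H/\!\prod_{j\ne i-1,i}l_j$, which extends continuously to $v_i$, letting $x\to v_i$ gives \eqref{eqeqeq-1}.

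\emph{For part (ii)}, I would first observe that any solution restricts on an edge $e_i$ to the solution of a two-point problem: near the interior of $e_i$ the Guillemin condition forces $u=l_i\log l_i+\psi$ with $\psi$ smooth, and writing \eqref{MA-0000} in the rigid coordinates $(s,t)$ with $s=l_i$ and $t$ arclength along $e_i$, then multiplying by $s$, yields $\psi_{tt}+s(\psi_{ss}\psi_{tt}-\psi_{st}^2)=H/\!\prod_{j\ne i}l_j$; letting $s\to 0$ shows $\phi_i:=u|_{e_i}$ satisfies the linear ODE $\phi_i''=(H/\!\prod_{j\ne i}l_j)|_{e_i}$ with endpoint values $a_i$ and $a_{i+1}$. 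Conversely, taking $\phi_i$ to be the unique solution of this problem (convex, since its right-hand side is positive) produces a continuous, edgewise-convex function $\phi$ on $\partial P$, and it remains to solve $\det D^2u=H/(l_1\cdots l_N)$ in $P$ with $u=\phi$ on $\partial P$. Here the right-hand side is \emph{not} integrable on $P$ (it is comparable to $(l_{i-1}l_i)^{-1}$ near $v_i$), so Aleksandrov's theory does not apply off the shelf; following \cite{MR437805} I would truncate the right-hand side at level $k$, solve the resulting Dirichlet problems (with bounded right-hand side) for convex $u_k$, and pass to the limit, the decisive point being a uniform barrier bound $|u_k-w|\le C$ near $\partial P$, where $w:=\sum_j l_j\log l_j$. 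This is exactly where \eqref{eqeqeq-1} enters: it forces $\det D^2w$ and the right-hand side to share the leading coefficient $(n_{i-1}^x n_i^y-n_i^x n_{i-1}^y)^2$ at $v_i$, so that affine modifications of $w$ trap the $u_k$ from above and below, giving in the limit a generalized solution $u$ attaining $\phi$.

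It then remains to establish \eqref{eqeqeq-2} and uniqueness. In the interior, positivity and, for smooth $H$, smoothness of the right-hand side, together with Heinz's two-dimensional strict-convexity theorem, give $u\in C^{\infty}(P)$, while the global $C^{0,\alpha}(\overline{P})$ bound with $\alpha=\alpha(N)$ follows from barrier constructions at the corners, the exponent being governed by the opening angles. For \eqref{eqeqeq-2} one sets $\psi:=u-l_i\log l_i$ near an open edge, shows $\psi$ solves a degenerate elliptic equation in the $(l_i,t)$ coordinates, and bootstraps its regularity using the linearized Monge-Amp\`ere operator and carefully chosen barriers. For uniqueness, two solutions with the same vertex values have, by the edge ODEs, the same boundary data and the same Monge-Amp\`ere measure; since both lie between $w\pm C$ near $\partial P$, so that $\phi$ is genuinely attained, the comparison principle forces them to coincide.

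The main obstacle is the boundary regularity \eqref{eqeqeq-2}: the equation is simultaneously singular (the right-hand side blows up) and, once the $l_i\log l_i$ part is subtracted, degenerate at $\partial P$ with $|\nabla u|\to\infty$, so the Caffarelli-Nirenberg-Spruck route to second-derivative bounds does not transfer verbatim and new barrier functions adapted to the $l_i\log l_i$ asymptotics are required. A secondary difficulty, already present in setting up the generalized solution, is ensuring that $\det D^2w$ matches the right-hand side to leading order at the corners --- which is precisely why the compatibility condition \eqref{eqeqeq-1} is indispensable, and not merely a consequence of part~(i).
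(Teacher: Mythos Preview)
Your outline largely tracks Rubin's proof as the paper summarizes it (the paper does not prove Theorem~\ref{Rubinthm1.1} independently but recounts \cite{rubin2015monge}): the vertex determinant computation in~(i), the edge ODE \eqref{eqeqeq-3} fixing the Dirichlet data, the Cheng--Yau truncation \cite{MR437805} for a generalized solution, corner barriers for the global $C^{0,\alpha}(\overline P)$ bound, and interior smoothness via Heinz all match. The one substantive divergence is the mechanism for \eqref{eqeqeq-2}. You propose to subtract $l_i\log l_i$ and bootstrap the resulting degenerate equation via the linearized Monge--Amp\`ere operator and barriers. Rubin's actual route, as the paper stresses (see the paragraphs following Theorem~\ref{Rubinthm1.1} and Remark~\ref{RRR-1}), is to apply the \emph{partial Legendre transform} in the tangential variable, which converts the problem near an open edge into the quasi-linear Keldysh-type equation $H u^*_{pp}+y\,u^*_{yy}=0$; smoothness up to $\{y=0\}$ then comes from explicit $y^n$, $y^n\log y$ expansions for this linear degenerate equation together with a perturbation argument. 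Your suggestion is closer in spirit to Huang's later barrier strategy in \cite{huang2024monge}, but without the Legendre reduction you would have to build the degenerate Schauder theory from scratch --- which is consistent with your identifying this step as the main obstacle.

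One small slip: ``affine modifications of $w=\sum_j l_j\log l_j$'' leave $\det D^2w$ unchanged, so they cannot by themselves produce strict sub/super\-solutions for the truncated problems; a genuinely convex correction is needed, and the paper records that Rubin's lower barrier at the corners is a power of $\prod_j l_j$, not $w$.
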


The condition \eqref{eqeqeq-2} is nearly identical to the Guillemin boundary condition in \eqref{MA-0000}, except at the vertices of the polygon.
	
Rubin observed that if a solution \( u \) exists, then its restriction to the edge \( e_i \) satisfies the following second-order ODE along the edge \( e_i \), i.e.:
\begin{equation}
\label{eqeqeq-3}
\partial^2_{T_i} u = \frac{|n_i|^2}{\varphi_{n_i}}
\end{equation}
where \( \varphi \) is given by \eqref{num-den}. Combined with the assigned values of \( u \) at the vertices \( v_i \), this equation completely determines the restriction of \( u \) to the boundary \( \partial P \) of the polytope. Thus, \( u \) can be obtained by solving the Monge–Ampère equation with this Dirichlet condition to find a generalized solution.
More precisely, in \cite[Lemma 2.3]{rubin2015monge}, it is shown that if the \( i \)-th edge \( e_i = \{ l_i(x) = 0 \} \) is parametrized by \( x = v_i + tT_i \), then for the restriction function \( u \in C^2 \left((0,L) \right) \cap C^0([0,L]) \) that solves
\begin{equation}
\label{l-r-en}
u_{tt} = \frac{h(t)}{t(L-t)}
\end{equation}
where \( 0 < h \in C^{\infty}\left([0,L] \right) \), we have:
\begin{equation}
\label{l-r-en-1}
u(t) = h(0) t \log t + h(L) (L - t) \log (L - t) + v(t).
\end{equation}
Here \( v \) is smooth on \( [0, L] \). The function \( u(t) \) is determined by its boundary values \( u(0) \) and \( u(L) \).

It is worth mentioning that the boundary condition \eqref{eqeqeq-3} also appeared in the work of Le and Savin \cite{MR3125549}, in connection with the variational approach to Abreu’s equation \eqref{Abreu-eq-4th}. In that context, this relation arises from the Euler–Lagrange equation satisfied by a minimizer. However, in Rubin's context, it follows directly from the Guillemin boundary conditions through some relatively simple computations.
	
The remaining important issue, we must emphasize again, is regularity. The \( C^{0,\alpha} \) regularity on \( \overline{P} \) is established by constructing appropriate barrier functions. To address the lack of strict convexity of the domain, a lower barrier is derived from a power of the product of the \( l_i \)'s. The regularity and asymptotic expansion at the edges are modeled on the following problems:
\(
\det D^2u(x_1, x_2) = \frac{1}{x_2}
\)
near the interior of an edge \( \{x_2 = 0\} \), and  
\(
\det D^2u(x_1, x_2) = \frac{1}{x_1 x_2}
\)
near a vertex, i.e. in this model problem the corner is located at \( \mathbf{0} = (0, 0) \).
	
Rubin proved the existence of an Alexandrov (equivalently viscosity) solution \( u \in C^{0,\alpha}(\overline{P}) \) for the Monge–Ampère equation with suitable boundary conditions for some \( \alpha \in (0, 1) \). By applying the partial Legendre transform (as in \cite{MR2494810}), reducing the problem to a quasi-linear equation, and using perturbation arguments, he established the \( C^{\infty} \) regularity of \( u(x) - \sum_{i=1}^N l_i(x) \log l_i(x) \) up to the boundary of \( P \), except at the vertices.  
He demonstrated that solutions corresponding to positive, smooth boundary functions \( H \) exhibit the same expansion as solutions of the model problem:
\begin{equation}
\label{MA-model-face}
\det D^2 u(x,y)=\frac{1}{y}
\end{equation} 
in neighborhoods of boundary points away from the vertices. For \eqref{MA-model-face}, Rubin derived expansions in terms of \( y^n \) and \( y^n \log y \) for the partial Legendre transform of the solution (and thus for the solution itself) within a half-ball. These expansions follow from the fact that the solution satisfies a simple degenerate elliptic linear equation.
In other words, Rubin established the existence of solutions to the Monge–Ampère equation with the boundary condition:   
\[
u - \sum_{i=1}^N l_i \log l_i \in C^{\infty}\left(\overline{P} \setminus \{v_1, \cdots, v_N\} \right) \cap C^{0,\alpha} (\overline{P}).
\]  
The regularity of the solution at the vertices \( v_i \) remained unresolved until Huang provided a definitive answer in \cite{huang2023guillemin}, proving higher regularity up to the vertices:
$ u - \sum_{i=1}^{N} l_i \log l_i \in C^{\infty}(\overline{P})$.
	
This is a surprising phenomenon when compared to the recent results in \cite{zbMATH07653694, huang2023monge} for the uniformly elliptic case on cones and polygonal domains, highlighting the crucial role of the Guillemin boundary condition in \eqref{MA-0000}. Notably, the gradient of \( u \) blows up on \( \partial P \), ensuring the strict convexity of \( u \) and interior regularity in higher dimensions \( n \geq 3 \). This contrasts with the two-dimensional case due to Pogorelov's well-known counterexample (see for example \cite{zbMATH07568023}). Specifically, in \cite{huang2023monge}, Huang and Shen addressed the higher-dimensional case (for example, see the proof of Lemma 3.2 and equation (3.9) in their work), where the effect of the Guillemin boundary condition in \eqref{MA-0000} is evident. In this argument, they utilized Caffarelli's localization theorem \cite[Theorem 1]{caffarelli1990localization}. However, these results are somewhat well-known; for the sake of completeness, we provide a detailed proof in Theorem~\ref{somehow-standard}.
	
Besides the importance of the Guillemin boundary condition in \eqref{MA-0000}, the polytope structure of the problem is also crucial, as noted by Donaldson in \cite{donaldson2005interior} and also by Rubin in \cite{rubin2015monge}. Specifically, a naive formulation of the problem on strictly convex domains \( D \subset \mathbb{R}^2 \), with the boundary function \( d(x) := \mathrm{dist}(x, \partial D) \), given by:   
\[
\begin{cases}
\det D^2 u(x) = \mathrm{O}(d^{-1}), & \quad \text{in }D, \\
u - d \log d \in C^3 (\overline{D}),
\end{cases}
\]  
would admit no solution. The boundary asymptotics for \( u \) imply that \( \det D^2 u(x) \sim d^{-1} \log d^{-1} \) near \( \partial D \), which is not compatible (\cite{donaldson2005interior}). Therefore, the geometric features of the polytope are essential to the problem.

Since problem is local in nature, without loss of generality, let's consider the following degenerate/singular Monge-Ampère equation:
$$
\det D^2 u(x) = \frac{H(x)}{x_1 x_2}
$$
defined within a cube, say,
$$ Q_1 := \left\{x = (x_1, x_2) \in \mathbb{R}^2 \, : \, 0 < x_1, x_2 < 1 \right\}, $$
where the singularity on the right-hand side arises along the edges $x_1 = 0$, $x_2 = 0$, and specifically at the vertex $(0, 0)$. We show that, under mild assumptions—specifically, \( 0 < H \in C^{0,\alpha}(\overline{Q_1}) \) for some \( \alpha \in (0,1) \), and the explicit bounds   
\[
0 < a \leq H(x_1,x_2) \leq A < +\infty,
\]  
the weak Guillemin boundary condition  \[
u - x_1 \log x_1 - x_2 \log x_2 \in C^{1,\alpha/2}(\overline{Q_1}),
\]  
is satisfied.
	
Before we finish this section, we address the interior estimate. As mentioned before, in dimension two, a well-known theorem by Heinz (\cite{heinz1959differentialungleichung}) guarantees that we can bound the modulus of strict convexity in the interior. Indeed, for \( n = 2 \), the strict convexity of the solution arises directly from the positivity of the Monge-Ampère measure within the interior. However, in any dimension, the following interior estimate is a direct consequence of Caffarelli's "no line segment" theorem (\cite[Theorem 1]{caffarelli1990localization}). For the reader's convenience, we provide a detailed proof.
	
\begin{theorem}
\label{somehow-standard}
Let $u$ solves the problem \eqref{MA-0000}. Then, for any positive $\delta>0$, there exists a constant $C=C(P, a, A, \|H\|_{C^{0,\alpha}(\overline{P})}, \alpha, \|u\|_{{C^0} (\overline{P})},\delta)$ such that 
$$ \|u\|_{C^{2,\alpha}\left(P_{\delta}\right)} \leq C, $$
where \( P_{\delta} = \left\{ p \in P \, : \, \mathrm{dist}\left(p, \partial P \right) \geq \delta \right\} \) denotes the \( \delta \)-neighborhood of the polytope \( P \).
\end{theorem}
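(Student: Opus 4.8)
## Proof proposal for Theorem \ref{somehow-standard}

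\textbf{Overall strategy.} The claim is a purely interior estimate for the Monge--Ampère equation $\det D^2 u = H/(l_1\cdots l_N)$ on the region $P_\delta$ bounded away from $\partial P$. On $P_\delta$ the right-hand side $f:= H/(l_1\cdots l_N)$ is bounded above and below by positive constants depending only on $\delta$, $P$, $a$, $A$, and it inherits the $C^{0,\alpha}$ norm of $H$ (since $l_1\cdots l_N$ is smooth and bounded below on $P_\delta$). So the whole problem reduces to the classical interior regularity theory for $\det D^2 u = f$ with $f\in C^{0,\alpha}$ and $0 < \lambda \le f \le \Lambda$. The plan is to run the standard three-step pipeline: (1) a strict-convexity / section-shape statement, (2) Caffarelli's $C^{1,\beta}_{\mathrm{loc}}$ and $W^{2,p}_{\mathrm{loc}}$ estimates, upgraded via Caffarelli's $C^{2,\alpha}$ interior estimate to a $C^{2,\alpha}$ bound on a slightly smaller set, (3) a covering argument to pass from ``slightly smaller'' to all of $P_\delta$. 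The only genuine input specific to our setting is that we need these interior estimates to be \emph{quantitative}, i.e. to depend only on the listed data and on $\|u\|_{C^0}$, not on the solution itself.

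\textbf{Step 1: strict convexity and section control.} Fix $\delta$ and work on $P_{\delta/2}$. First I would record that $u$ restricted to $P_{\delta/2}$ is convex with $\|u\|_{C^0(\overline{P})}$ controlling its oscillation, and that $\lambda(\delta) \le \det D^2 u \le \Lambda(\delta)$ there in the Alexandrov sense. The key point is that $u$ has no supporting-hyperplane contact set containing a segment: in dimension $n=2$ this follows directly from Heinz's theorem (\cite{heinz1959differentialungleichung}), since positivity of the Monge--Ampère measure forces strict convexity; in general dimension it is Caffarelli's ``no line segment'' theorem \cite[Theorem 1]{caffarelli1990localization}. Combined with the two-sided bound on the Monge--Ampère measure, Caffarelli's theory then gives a quantitative modulus of strict convexity and, for each $x_0\in P_\delta$, control of the shape of the sections $S_h(x_0) = \{ y : u(y) < u(x_0) + \nabla u(x_0)\cdot(y-x_0) + h \}$: for $h$ small (depending only on $\delta$, the data, and $\|u\|_{C^0}$) the section is compactly contained in $P_{\delta/2}$ and, after the John-ellipsoid normalization, is comparable to a ball.

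\textbf{Step 2: from sections to $C^{2,\alpha}$.} On these normalized sections Caffarelli's interior estimates apply: the $C^{1,\beta}$ estimate and the $W^{2,p}$ estimate (for all $p<\infty$) give strict convexity with a power-type modulus and interior $W^{2,p}$ bounds depending only on $n$, $\lambda$, $\Lambda$, $p$ and the section geometry. With $f\in C^{0,\alpha}$ one then invokes Caffarelli's $C^{2,\alpha}$ interior estimate for the Monge--Ampère equation (equivalently: once $u\in W^{2,p}$ for large $p$ the equation is uniformly elliptic with $C^{0,\alpha}$ coefficients on a smaller section, and the linear Schauder estimate closes the argument), obtaining $\|u\|_{C^{2,\alpha}(S_{h/2}(x_0))} \le C$ with $C$ depending only on $P$, $a$, $A$, $\|H\|_{C^{0,\alpha}(\overline{P})}$, $\alpha$, $\|u\|_{C^0(\overline{P})}$ and $\delta$. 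Translating back through the normalizing affine map (whose norm and the norm of its inverse are controlled by the same data) yields a fixed-size ball $B_{r}(x_0)\subset P_\delta$ with $r=r(\delta,\text{data})>0$ on which $\|u\|_{C^{2,\alpha}(B_r(x_0))}\le C$.

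\textbf{Step 3: covering.} Since $\overline{P_\delta}$ is compact, cover it by finitely many balls $B_{r/2}(x_j)$ with $x_j\in P_\delta$; on each $B_r(x_j)$ we have the uniform $C^{2,\alpha}$ bound from Step 2, and patching these (the number of balls depends only on $P,\delta,r$) gives $\|u\|_{C^{2,\alpha}(P_\delta)}\le C(P,a,A,\|H\|_{C^{0,\alpha}(\overline{P})},\alpha,\|u\|_{C^0(\overline{P})},\delta)$, as claimed.

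\textbf{Main obstacle.} Nothing here is new; the only care needed is bookkeeping the dependence of all constants. The one place where this requires attention is Step 1: one must verify that the size $h$ of the ``good'' sections, and the eccentricity of their John ellipsoids, can be bounded below/above purely in terms of $\delta$, the bounds $a\le H\le A$, the polytope $P$, and $\|u\|_{C^0(\overline{P})}$ --- in particular uniformly over the family of solutions $u$. This is exactly what Caffarelli's compactness/normalization machinery delivers (a solution with controlled Monge--Ampère measure and controlled $C^0$ norm on a fixed domain has sections of controlled shape), so I expect the argument to go through cleanly; for the reader's convenience I would spell out this uniformity rather than the well-known estimates themselves.
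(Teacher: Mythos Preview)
Your proposal is correct for the paper's setting ($n=2$), and the Steps~2--3 pipeline (normalized sections $\to$ Caffarelli $C^{2,\alpha}$ $\to$ covering) is exactly the standard route. The difference from the paper lies in Step~1. You obtain the uniform modulus of strict convexity directly from Heinz's quantitative two-dimensional estimate, treating the problem as a purely interior one on $P_{\delta/2}$. The paper instead runs a compactness argument valid in every dimension: assuming a sequence $u_j$ with $M(u_j,\delta)\to 0$, it passes to a limit $u_\infty$ whose contact set with a supporting hyperplane is not a point; by Caffarelli's localization theorem \cite[Theorem~1]{caffarelli1990localization} this set contains a segment reaching $\partial P$, and then the Guillemin boundary condition (which forces $|u(t,c_1t,\dots)-u(\mathbf{0})|\gtrsim t|\log t|$ along any ray hitting an edge) yields the contradiction. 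So the paper's proof uses the boundary behavior in an essential way, whereas yours does not.

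One caution: your parenthetical remark that ``in general dimension it is Caffarelli's `no line segment' theorem'' and that ``a solution with controlled Monge--Amp\`ere measure and controlled $C^0$ norm on a fixed domain has sections of controlled shape'' is not correct as stated for $n\ge 3$. Pogorelov's example shows that $\lambda\le\det D^2u\le\Lambda$ and a $C^0$ bound on a fixed interior domain do \emph{not} by themselves give a uniform strict-convexity modulus; Caffarelli's theorem only tells you that any flat segment must extend to the boundary, so to rule it out you need boundary information---precisely the role of the Guillemin condition in the paper's argument. For $n=2$ Heinz saves you and your proof stands, but if you want to comment on higher dimensions you should either invoke the boundary condition as the paper does or drop the claim.
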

	
\begin{proof}
In this proof, we do not restrict \( n \) to be \( 2 \). For a given positive \( \delta \) and any solution \( u \), the modulus of strict convexity of \( u \) on \( P_{2\delta} \subset P_{\delta} \) is defined as follows:
\[
M(u,\delta) := \inf \left\{ \left(u - l_{p,u} \right)(x) \, : \, p \in P_{2\delta}, \, x \in \partial P_{\delta} \right\}.
\]
Here, \( l_{p, u} \) denotes the linearization of \( u \) at \( p \). By the celebrated theorem of Caffarelli, \cite[Theorem 1]{caffarelli1990localization}, we obtain the estimate:
\[
\|u\|_{C^{2,\alpha}\left(P_{2\delta}\right)} \leq C = C\left(\|u\|_{L^{\infty}\left(P_{\delta}\right)},\|F\|_{C^{0,\alpha}\left(P_{\delta}\right)}, \inf_{P_{\delta}} F,  M(u,\delta)\right)
\]
where 
\[
F= \varphi^{-1}= \frac{H}{l_1  \cdots  l_N}.
\] 
Clearly, \( \|F\|_{C^{0,\alpha}\left(P_{\delta}\right)} \) and \( \inf_{P_{\delta}} F \) depend only on \( \delta \), \( \alpha \) and \( \|H\|_{C^{0,\alpha}(\overline{P})} \). Hence, to establish the interior estimate, it suffices to show that \( M(u,\delta) \) is bounded from below.
		
Suppose, for contradiction, that the interior estimate fails for some positive \( \delta \). Then there exists a sequence of functions \( u_j \) and \( H_j \) such that 
\[
\left\|u_j \right\|_{L^\infty\left(P_{\delta}\right)}, \left\|H_j\right\|_{C^{0,\alpha}\left(P_{\delta}\right)} \leq C,
\]
but 
\[
M(u_j,\delta) \to 0, \qquad \text{as} \quad j \to + \infty.
\] 
This implies that there exist sequences of points \( p_j \in P_{2\delta} \) and \( x_j \in \partial P_{\delta} \) such that 
\[
\left(u_j - l_{p_j, u_j} \right)(x_j) \to 0.
\] 
Passing to a subsequence, we may assume that \( u_j \to u_{\infty} \) locally uniformly in \(P\), \( x_j \to x_{\infty} \in \partial P \), and \( p_j \to p_{\infty} \in P_{\delta} \). Since we have a uniform bound on \( u_j \), on the compact sets of \(P\), we also obtain a uniform gradient bound:
\[
\left\| \nabla u_j \right\|_{L^{\infty}(P_{\delta})} \leq C=C(\delta).
\] 
This ensures that the linearizations \( l_{p_j, u_j} \) converge locally uniformly to an affine function \( l_{\infty} \). Consequently,
\[
u_j - l_{p_j, u_j} \to u_{\infty} - l_{\infty} \qquad \text{locally uniformly}
\]
as well. In particular, \( u_{\infty} - l_{\infty} \) vanishes at \( p_{\infty} \) and \( x_{\infty} \). 
By Caffarelli's result, \cite[Theorem 1]{caffarelli1990localization}, \( u_{\infty} - l_{\infty} \) must vanish along a line segment that intersects the boundary of the polytope. However, one can easily obtain a contradiction with the boundary condition for \( u_{\infty} \), following a similar line of argument as in \cite{Chen2014} or \cite[Lemma 3.2]{huang2023monge}. In fact, the set \(\{ u_{\infty} = l_{\infty} \}\) contains a ray \( L \) with its endpoint on \( \partial P \). Without loss of generality, assume that this endpoint is \(\textbf{0}\) and that the edge of the polytope \( P \) containing \(\textbf{0}\) is given by \( x_1 = 0 \). Furthermore, assume that 
\[
L = \left\{ x = (x_1, x_2, \dots, x_n) \in \overline{P} \, : \, x_1 = t, \, x_2 = c_1 t, \, \dots, \, x_n = c_{n-1} t \right\}
\]
for some range of \( t \) and for some constants \( c_1, c_2, \dots, c_{n-1} \). Then, along \( L \), we have:
\[
\begin{aligned}  
& 0 = \left| u_{\infty}(t, c_1t, \dots, c_{n-1}t) - u_{\infty}(\textbf{0}) \right|  \\  
& \geq \left| u_{\infty}(t, c_1t, \dots, c_{n-1}t) - u_{\infty}(0, c_1t, \dots, c_{n-1}t) \right| - \left| u_{\infty}(0, c_1t, \dots, c_{n-1}t) - u_{\infty}(\textbf{0}) \right|,  \\  
&\geq \mathrm{O}\left(t \left|\log t \right|\right) - \mathrm{O}\left(t^2\right) > 0,
\end{aligned}
\]
for sufficiently small \( t \), due to the Guillemin boundary condition and the behavior of solutions along the edge of the polytope (see, e.g., \eqref{l-r-en} and \eqref{l-r-en-1}). This contradiction completes the proof.
\end{proof}

\section{Regularity near the Edge}
\label{section--2}
	
The goal of this section is to establish Theorem~\ref{main-theorem} away from the vertices. More precisely, we prove the following result.
	
\begin{proposition}
\label{proposition-edges}
For any positive \( \delta \), there exists a constant 
$
C = C(P, a, A, \|H\|_{C^{0,\alpha}(\overline{P})}, \delta, \alpha, \|u\|_{C^0 (\overline{P})})
$
such that
\[
C^{-1} D^2 \textbf{u}_0 \leq D^2 u \leq C D^2 \textbf{u}_0, \qquad \text{in }P \setminus N_{\delta},
\]  
for any solution $u$ to the problem \eqref{MA-0000}. Here, \( N_{\delta} \) denotes the union of balls of radius \( \delta \) centered at the vertices of \( P \).
\end{proposition}
	
We will provide a sketch of the proof before going into the details.
For a given \( \delta > 0 \), let us consider a point \( p = (p_1, p_2) \) that is close to an edge, say the \( x_1 \)-axis, and is at least a distance of \(\delta\) away from the other edges of the polytope \( P \). This implies that \( p \) lies within a small neighborhood of the \( x_1 \)-axis but is still far from the vertices and other edges of \( P \), ensuring that the local geometry around \( p \) can be analyzed without interference from corner singularities. The goal is to show that 
\[
D^2u(p_1,p_2) = \begin{bmatrix}
u_{11}(p_1,p_2) & u_{12}(p_1,p_2) \\
u_{21}(p_1,p_2) & u_{22}(p_1,p_2)
\end{bmatrix}
\sim 
\begin{bmatrix}
1 & 0 \\
0 & \dfrac{1}{p_2}
\end{bmatrix}
\]
uniformly (depending on $\delta$).
First, we observe that the tangential second derivative on the boundary has a lower and upper bound depending on $H$ and $\delta$. Therefore, we would like to use the equation to extend this estimate to a neighborhood of the edge. A natural way to do that is to localize the solution at the boundary. Unfortunately, we can not do that due to the blow up of the derivative at the boundary. In order to overcome this issue, we follow a clever idea introduced by Donaldson in \cite{donaldson2009constant}.
Instead of estimating the second tangential derivative, we try to show that the second normal derivative close to the boundary behaves uniformly like $d^{-1}$, where $d$ is the distance to the boundary. More precisely, we want to show that 
$$
u_{22}(p_1,p_2)p_2 \leq C=C\left(\delta, \|u\|_{\infty}, H \right).
$$ 
It is difficult to directly prove such an estimate. Instead, we prove a coarse version of this estimate (Theorem~\ref{thm1}).
Namely, we are bounding the quantity $D$ introduced by Donaldson in \cite{donaldson2009constant} (c.f. Definition \ref{DDD-12}).  The next step is to use Caffarelli's celebrated result (\cite{zbMATH04154994}) to obtain a $C^{2,\alpha}$ estimate. It is important to note that in order to bound $D$, we only use upper and lower bounds on $H$. However, to obtain an estimate on the second derivative, we use the H\"older continuity of $H.$ 
	
\begin{definition}[Normalization of solutions]
For a solution $u$ and a point $p = (t, s) \in P$,  we define
$$ u^*(x) := u(x) - l_p(x), $$
where 
$$ l_p(x) := u(p) + \nabla u(p) \cdot (x - p) $$
being the supporting hyperplane of $u$ at $p$. Alternatively, we can write
$$ u^*(x_1, x_2) := u(x_1, x_2) - u(t, s) - u_1(t, s)(x_1 - t) - u_2(t, s)(x_2 - s), $$
which represents the normalization of the solution $u$ at $p = (t, s)$. Note that $u^*$ attains its minimum value of $0$ at $p$, and the supporting hyperplane of $u^*$ at $p$ is the trivial zero function.
\end{definition}

\begin{definition}[Donaldson's notion of $D$ of $u$ at $(t, s)$] 
\label{DDD-12}
Suppose the point $p$ is close to an edge, say $x_2=0$.
For the solution $u$ and the normalization point $p = (t, s)$, we define the constant
$$ D \left(u; (t, s) \right) := \frac{u^*(t, 0)}{s} = \frac{u(t, 0) - u(t, s) + u_2(t, s) s}{s} $$
which depends on $u$ and $(t, s)$. This constant is referred to as the $D$ for the normalized solution $u^*$ at the normalization point $p = (t, s)$.
\end{definition}
	
To prove the preceding proposition, it suffices to focus on a neighborhood of the edges. Without loss of generality, we assume the edge is  
\[
\left\{(x_1,0) \, : \, |x_1| \leq 2 \right\}
\]
and consider the rectangle  
\[
R = \left\{(x_1, x_2) \, : \, |x_1| \leq 1, \, 0 < x_2 \leq 1 \right\}.
\]
Let $P = [-2, 2] \times (0, 2] \subset \mathbb{R}^2$, and consider a smooth, strictly convex function $u : P \to \mathbb{R}$, where $u(x) = u(x_1, x_2)$, satisfies the following Guillemin boundary condition:
\begin{equation}
\label{Guilliman}
u(x_1, x_2) - x_2 \log x_2 \in C^{\infty} (\overline{P}).
\end{equation}
Additionally, assume that $u$ satisfies the following Monge-Ampère equation:
\begin{equation}
\label{eq1}
\det D^2 u(x_1, x_2) = \frac{H(x_1, x_2)}{x_2}
\end{equation}
where $H : \overline{P} \to \mathbb{R}$ is continuous and satisfies $ 0 < a \leq H(x_1, x_2) \leq A $, for some constants $a$ and $A$. Furthermore, assume that 
$u(x_1, 0)$ is smooth and strictly convex for $-2 \leq x_1 \leq 2$; 
then, the equation \eqref{eq1} implies that 
$$ u_{11}(x_1, 0) = H(x_1, 0). $$
Considering $u = u(x_1, x_2)$ as described above, Rubin in \cite[Theorem 3.1]{rubin2015monge}, used Perron’s method to solve the Dirichlet problems and demonstrated that there exists a solution $u$ in $P$ which is H\"older continuous up to the boundary. More precisely, there exists $\alpha \in (0,1)$ such that
\begin{equation}
\label{first-bound}
\|u\|_{C^{0,\alpha}(\overline{P})} \leq C,
\end{equation}
where $C = C(a, A)$ is a universal constant, i.e. independent of all solutions $u$ of \eqref{eq1}. Specifically, this is also a uniform $L^{\infty}$ bound for the solutions.
	
The following theorem demonstrates that $D$ is universally bounded with respect to the normalization points $(t, s) \in R$, regardless of how close $s$ is to zero. This bound is uniform in $u$ as well.
	
\begin{theorem}
\label{thm1}
For any positive \( \delta \), there exists a universal positive constant \( \tilde{D} = \tilde{D}(a, A, \delta) \) such that  
\[
D \left( u; (t,s) \right) \leq \tilde{D},
\]  
for any \( (t,s) \in \overline{P} \setminus N_{\delta} \), where \( N_{\delta} \) denotes the union of balls of radius \( \delta \) centered at the vertices of the polytope \( P \).
\end{theorem}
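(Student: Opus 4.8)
\emph{Plan.} The plan is to argue by contradiction, using Donaldson's rescaling at a singular edge from \cite{donaldson2009constant}. We may assume $H$ is smooth: the asserted constant $\tilde D$ depends only on $a,A,\delta$; for smooth compatible $H$ Theorem \ref{Rubinthm1.1} produces a solution smooth up to $\partial P$ away from the vertices, on which the argument is run; and the estimate for H\"older $H$ is then recovered by the approximation scheme of Section \ref{section--4}. After reducing to the local model near an edge, a point of $\overline{P}\setminus N_\delta$ may be taken in $R=[-1,1]\times(0,1]$, so suppose there are solutions $u_j$ and points $(t_j,s_j)\in R$ with $D_j:=D(u_j;(t_j,s_j))\to\infty$. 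First I would show $s_j\to 0$. If $s_j\ge\varepsilon_0>0$ along a subsequence, then $(t_j,s_j)$ is at distance $\ge\min(1,\varepsilon_0)$ from $\partial P$, so Theorem \ref{somehow-standard} gives a uniform $C^2$ bound near $(t_j,s_j)$; but then
\[
D_j=\frac{u_j(t_j,0)-u_j(t_j,s_j)}{s_j}+\partial_2 u_j(t_j,s_j)
\]
is bounded (the numerator by \eqref{first-bound}, the last term by the interior estimate) — a contradiction. So $s_j\to 0$. Next, convexity of $u_j$ in $x_2$ together with the interior estimate at $(t_j,1)$ gives $\partial_2 u_j(t_j,s_j)\le\partial_2 u_j(t_j,1)\le C$, while $u_j(t_j,0)-u_j(t_j,s_j)\le Cs_j^{\alpha}$ by \eqref{first-bound}; hence $M_j:=D_j s_j=u_j(t_j,0)-u_j(t_j,s_j)+\partial_2 u_j(t_j,s_j)s_j$ satisfies $0\le M_j\le Cs_j^{\alpha}\to 0$, so the blow-up of $D_j$ comes entirely from $s_j\to 0$.

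Now normalize $u_j^*:=u_j-l_{(t_j,s_j)}$, so $u_j^*\ge 0$, $u_j^*(t_j,s_j)=0=|\nabla u_j^*(t_j,s_j)|$, $u_j^*(t_j,0)=M_j$, and introduce the Donaldson rescaling
\[
w_j(\xi,\eta):=\frac{1}{s_j}\,u_j^*\!\Bigl(t_j+\sqrt{s_j}\,\xi,\ s_j\eta\Bigr).
\]
A direct computation gives $\det D^2 w_j=H_j/\eta$ with $H_j(\xi,\eta):=H(t_j+\sqrt{s_j}\xi,s_j\eta)\in[a,A]$; moreover $w_j$ is convex and nonnegative on a domain exhausting the half-plane $\{\eta>0\}$, $w_j(0,1)=0=|\nabla w_j(0,1)|$, $w_j(0,0)=D_j\to\infty$, and from $\partial_1^2 u_j(\cdot,0)=H(\cdot,0)$ one gets $\partial_\xi^2 w_j(\xi,0)=H_j(\xi,0)$. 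The point of using exponent $\tfrac12$ in the tangential variable and $1$ in the normal one is precisely that this is the scaling leaving the singular right-hand side $H/x_2$ invariant while opening up the domain — this is the device of \cite{donaldson2009constant}. (Alternatively, normalizing by $1/M_j$ with tangential scale $D_j\sqrt{s_j}$ yields $\tilde w_j$ with $\tilde w_j(0,0)=1$ and $\partial_\xi^2\tilde w_j(\cdot,0)=D_j H_j(\cdot,0)\to\infty$; this second picture is what makes the pinching below transparent.)

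It remains to turn $D_j\to\infty$ into a contradiction via the geometry of the sections of $w_j$. Fix a small $c>0$ and let $S_j:=\{w_j<c\}$, a bounded convex section containing $(0,1)$ in its interior. On one hand, since $w_j$ attains a strict interior minimum $0$ at $(0,1)$ with vanishing gradient, its Monge--Amp\`ere mass over $S_j$ equals $|\partial w_j(S_j)|$, which contains a ball of radius $\gtrsim c/\operatorname{diam}(S_j)$, so $(\operatorname{diam}S_j)^{-2}\lesssim\int_{S_j}\det D^2 w_j$; on the other hand $\int_{S_j}\det D^2 w_j=\int_{S_j}H_j/\eta\le A\int_{S_j}d\xi\,d\eta/\eta$. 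The mechanism is that $D_j\to\infty$ forces $S_j$ to pinch to a point at the edge $\{\eta=0\}$: in the $\tilde w_j$–picture $\partial_\xi^2\tilde w_j(\cdot,0)=D_j H_j(\cdot,0)\ge aD_j$, so the trace of the section on $\{\eta=0\}$ has length $O(D_j^{-1/2})\to 0$, and a convexity/barrier argument then confines the section to a cusp-like region with $\int d\xi\,d\eta/\eta\to 0$, while $\operatorname{diam}$ stays bounded. Combining, $(\operatorname{diam}S_j)^{-2}\lesssim\int_{S_j}\det D^2 w_j\le A\int_{S_j}d\xi\,d\eta/\eta\to 0$ with $\operatorname{diam}(S_j)$ bounded — a contradiction. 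I expect the main obstacle to be exactly this last step: controlling, uniformly up to the singular edge, the shape of the rescaled sections $S_j$ — their diameter from above and their pinching rate at $\{\eta=0\}$ — which does not follow from standard interior Monge--Amp\`ere theory, since the right-hand side is unbounded at the edge and the $C^0$–norm of $w_j$ is itself not a priori under control; it is here that Donaldson's section estimates and barrier constructions from \cite{donaldson2009constant} are indispensable.
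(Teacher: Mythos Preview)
Your overall framework --- contradiction, normalization at a bad point, Donaldson-type rescaling near the edge --- matches the paper's strategy, but both your choice of rescaling and your endgame differ, and the proof as written has a genuine gap.

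\textbf{Rescaling and the missing ``worst point'' device.} You normalize by $1/s_j$ with tangential scale $\sqrt{s_j}$, which keeps $\det D^2 w_j = H_j/\eta$ but sends $w_j(0,0)=D_j\to\infty$; you then try to extract information from the sections of an unbounded sequence. The paper instead normalizes by $1/(s_0 D)$ with tangential scale $\lambda_0=\sqrt{s_0 D}$, so that $\tilde u(0,0)=1$ and $\det D^2 \tilde u = H/(D\,x_2)\to 0$. More importantly, the paper does \emph{not} just pick points where $D$ is large: it first selects the ``worst point'' $p_0$ maximizing $\Lambda(p)^\theta D(u;p)$ over $R$ (with $\theta>2/\alpha-2$). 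This point selection yields the key comparison $D(u;(t,s))\le k\,D(u;p_0)$ for all nearby $(t,s)$ (inequality \eqref{eq2}), which after rescaling becomes $D(\tilde u;(t,s))\le k$; this is exactly what drives the upper bounds $\tilde u(0,x_2)\le 1$ on $[0,1]$ and $\tilde u(0,x_2)\le kx_2\log x_2$ for $x_2\ge 1$ (Lemma \ref{UPER-LOWER-bound}), and together with $\tilde u_{11}(\cdot,0)=H(\cdot,0)\in[a,A]$ (Lemma \ref{UPER-LOWER-bound-2}) gives local uniform boundedness of $\tilde u_n$. Without this worst-point selection you have no control of $D(w_j;\cdot)$ at nearby points, hence no way to bound $w_j$ off the axes, and no a priori handle on $\partial_\xi w_j(0,0)$.

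\textbf{The contradiction.} The paper's endgame is qualitatively different from your section-pinching sketch. Because $\tilde u_n$ is locally uniformly bounded and $\det D^2\tilde u_n\to 0$, the limit $u_\infty$ exists and is nowhere strictly convex; its zero set $Z\ni(0,1)$ must therefore contain a line $\{x_2=\eta x_1+1\}$ with $\eta\neq 0$ (the case $\eta=0$ is excluded using again the bound $D(\tilde u_n;\cdot)\le k$). From this one deduces that $\tilde u_n(x_1,0)\to -x_1+1$ locally uniformly on $(-\infty,1]$. But the boundary ODE gives $\tilde u_{n,11}(x_1,0)=H_n(t_0+\lambda_0 x_1,0)\in[a,A]$, so along a subsequence $\tilde u_n(x_1,0)\to 1+\beta x_1+\tfrac{\alpha}{2}x_1^2$ with $\alpha\ge a>0$ --- a strictly convex parabola, contradicting the affine limit.

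\textbf{Where your argument breaks.} Your final step --- that $S_j=\{w_j<c\}$ pinches at $\{\eta=0\}$ while $\operatorname{diam}(S_j)$ stays bounded --- is precisely the missing content. You have not shown that $S_j$ even approaches $\{\eta=0\}$: if $S_j\Subset\{\eta\ge\epsilon_0\}$ then $\det D^2 w_j$ is pinched between positive constants on $S_j$ and there is no contradiction at all. Nor is $\operatorname{diam}(S_j)$ bounded without a uniform local bound on $w_j$, which you do not have since $w_j(0,0)\to\infty$. In your alternative $\tilde w_j$ picture, $\partial_\xi\tilde w_j(0,0)$ is uncontrolled, so the location and depth of the boundary trace are unknown. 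These are not technicalities that Donaldson's barriers automatically supply; they are exactly what the worst-point selection and the paper's normalization are designed to furnish. You correctly flag this step as the ``main obstacle'', but as written the proposal is an outline that stops short of the decisive mechanism.
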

In order to prove Theorem~\ref{thm1}, we follow the same strategy as in \cite{donaldson2009constant}.
Arguing by contradiction we assume that such an estimate fails. Therefore, we have a solution $u$ and a point $p$ such that $D(u;p)$ is very large. We may assume that $p$ is the "worst point" in the sense that it almost maximizes $D$ (cf. \eqref{worst point}). By rescaling $u$, we construct a solution for a rescaled equation that is defined on a large subset of upper half-plane (see Definition \ref{D-constant-def}). Moreover, the rescaled function is normalized at $(0,1)$ and is uniformly bounded on compact subsets of upper half-plane. Now assume that there exists a sequence of solutions and sequence of points close to the edge whose $D$ goes to $+\infty$. By rescaling this sequence, we obtain a subsequence of rescaled solutions that converges locally uniformly to a nowhere strictly convex function which is affine linear on a half-plane on the boundary. On the other hand the boundary condition gives a uniform strict convexity of rescaled functions on the boundary. This leads to a contradiction.
	
In order to prove Theorem~\ref{thm1}, we need the following lemma.
	
\begin{lemma}
\label{zero-D}
In a uniform fashion with respect to any $u$, and any $t$ such that $(t, s) \in R$, we have:
$$ s^{1-\gamma} D \left(u; (t, s) \right) \to 0 $$
as $s \to 0$, for any fixed $\gamma<\alpha$.
\end{lemma}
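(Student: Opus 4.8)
The plan is to bound $D(u;(t,s))$ directly from the known $C^{0,\alpha}$ estimate \eqref{first-bound} together with a lower bound on the normal derivative $u_2(t,s)$ coming from the Guillemin boundary condition. Recall
\[
D\bigl(u;(t,s)\bigr) = \frac{u(t,0) - u(t,s) + u_2(t,s)\,s}{s}.
\]
By \eqref{first-bound} we have $|u(t,0)-u(t,s)| \le C s^{\alpha}$, so the contribution of the first two terms to $s^{1-\gamma}D$ is $\mathrm{O}(s^{\alpha-\gamma})\to 0$ for $\gamma<\alpha$. Hence everything reduces to controlling the term $s^{1-\gamma}u_2(t,s) = s^{-\gamma}\cdot s\, u_2(t,s)$; it suffices to show $s\,u_2(t,s)\to 0$ uniformly as $s\to 0$, in fact that $s\,u_2(t,s) = \mathrm{O}(s|\log s|)$ by the edge asymptotics.

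The key step is to use convexity together with the boundary estimate to pin down the size of $u_2$. First I would establish an upper bound: by convexity of $u$ in the $x_2$-variable, for $0<s<s'$ with $(t,s')\in R$,
\[
u_2(t,s) \le \frac{u(t,s') - u(t,s)}{s'-s},
\]
which by \eqref{first-bound} is bounded by $C(s'-s)^{-1}\bigl((s')^{\alpha}+s^{\alpha}\bigr)$; choosing $s' = 2s$ gives $u_2(t,s)\le C s^{\alpha-1}$, hence $s\,u_2(t,s)\le C s^{\alpha}\to 0$. For the lower bound I would again use convexity in $x_2$: $u_2(t,s)\ge \frac{u(t,s)-u(t,0)}{s} \ge -Cs^{\alpha-1}$, so $s\,u_2(t,s)\ge -Cs^{\alpha}$. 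Combining, $|s\,u_2(t,s)|\le Cs^{\alpha}$, and therefore
\[
s^{1-\gamma}\bigl|D(u;(t,s))\bigr| \le C s^{\alpha-\gamma} \xrightarrow[s\to 0]{} 0,
\]
uniformly in $u$ and in $t$ with $(t,s)\in R$. (If one wants the sharper rate, note that along the edge the restriction satisfies an ODE of the type \eqref{l-r-en}, giving $u(t,\cdot)$ a $s\log s$ expansion by \eqref{l-r-en-1}, whence $u_2(t,s) = \mathrm{O}(|\log s|)$ and $D(u;(t,s)) = \mathrm{O}(|\log s|)$, which is more than enough; but the crude convexity argument above already suffices.)

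The main obstacle is making the constants genuinely uniform in $u$: the bound $C$ in \eqref{first-bound} is uniform by Rubin's theorem, but one must be careful that the comparison points $(t,2s)$ stay inside the region where \eqref{first-bound} applies — this is why the statement restricts to $(t,s)\in R$ with $R$ chosen so that $(t,2s)\in P=[-2,2]\times(0,2]$ — and that no use is made of any interior second-derivative bound (which would not be uniform up to the edge). Once the argument is phrased purely in terms of the $C^{0,\alpha}$ bound and one-variable convexity, uniformity is automatic. I do not expect any serious difficulty beyond this bookkeeping.
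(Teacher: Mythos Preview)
Your proof is correct and follows essentially the same approach as the paper: split $s^{1-\gamma}D$ into the difference-quotient term controlled directly by the $C^{0,\alpha}$ estimate \eqref{first-bound}, and the derivative term, for which you establish $|u_2(t,s)|\le C s^{\alpha-1}$ via one-variable convexity and \eqref{first-bound}. The only cosmetic difference is in how that derivative bound is obtained: the paper first normalizes $u$ at $(t,1)$ so that $u_2(t,1)=0$, then uses the mean value theorem and monotonicity of $|u_2(t,\cdot)|$ on $(0,1]$, whereas you use the two secant inequalities at $s'=2s$ and $s''=0$ directly---your version is a mild simplification that avoids the normalization step.
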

	
\begin{proof}
Without loss of generality, we assume that $t=0$ and  $u$ is normalized at $p=(0,1)$. Define $g(x_2) := u(0, x_2)$. By convexity, $g'(x_2) = u_2(0, x_2)$ is increasing. Since $g'(1) = 0$, we know that $|g'(x_2)|$ is decreasing on $(0,1]$. Therefore, for any $s \in (0,1)$, there exists $\xi \in (0, s)$, depending on $s$, such that
$$ g'(\xi) = \frac{g(s) - g(0)}{s}. $$
Thus,
\begin{equation}
\label{eq10}
|u_2(0, s)|=|g'(s)| \leq |g'(\xi)| = \frac{|g(s) - g(0)|}{s} \leq s^{-1+\alpha} \|u \|_{C^{0, \alpha}(\overline{P})} \leq C s^{-1+\alpha} 
\end{equation}
where we used the estimate \eqref{first-bound}.
By the definition of the $D$ of $u$ at $(0, s)$, we have:
$$ s^{1-\gamma} D \left(u; (0, s) \right) = s^{-\gamma} \left( u(0, 0) - u(0, s) \right) + u_{2}(0, s) s^{1-\gamma}. $$ 
Thus, by using the uniform bound \eqref{first-bound} and  \eqref{eq10}, we obtain:
$$ s^{1-\gamma} D \left(u; (0, s) \right) \leq s^{-\gamma} |u(0, 0) - u(0, s)| + |u_{2}(0, s)| s^{1-\gamma} \leq C s^{\alpha-\gamma} $$
which completes the proof.
\end{proof}

\subsection{Proof of Theorem \ref{thm1} and its consequences}
	
In this subsection, we will prove Theorem~\ref{thm1} and discuss its consequences.
The proof will be divided into several steps:
	
\medskip
\noindent {\bf STEP I (preparing the rescaling of solutions):} 
\noindent
Firstly, for any point $p = (t, s) \in R$, define
$$ \Lambda(p) = \Lambda(t, s) := \min\{1 - t, 1 + t\} $$
which represents the distance of $p$ to the lines $x_1= -1$ and $x_1=1$. 
Also, for any solution $u$, let
\begin{equation}
\label{eq1.5}
\mu := \max_{p \in R} \Lambda(p)^{\theta} D \left(u; p \right)
\end{equation}
where $\theta>\frac{2}{\alpha}-2$ is a fixed number, and $\alpha$ is the H\"older exponent in \eqref{first-bound}.
If $p_0 = (t_0, s_0)$ is the point where this maximum is attained over $R$, i.e.,
\begin{equation}\label{worst point} \Lambda(p)^{\theta} D \left(u; p \right) \leq \mu = \Lambda(p_0)^{\theta} D \left(u; p_0 \right) \qquad \forall p \in R;\end{equation}
then by the Lemma~\ref{zero-D}, one have:
$$ s_0^{-\alpha+1} \leq \frac{C}{\mu} \Lambda^{\theta}(p_0) $$
for a universal constant $C=C(a,A)$.
Moreover, one can easily check that for any fixed $L>0$, the following estimate holds:
\begin{equation}
\label{eq2}
D \left(u; (t,s) \right) \leq D \left(u; p_0 \right) \frac{1}{1-LC \mu^{-\frac{1}{\theta}}}
\end{equation}
for any $(t,s) \in R$ with $|t-t_{0}| \leq L \sqrt{s_0 D(u; p_0)}$, provided that $s_0$ is sufficiently small.
	
Secondly, we start by rescaling the normalization of the solution $u$, denoted $u^*$, as follows:
	
\begin{definition}[Definition of the scaling $\tilde{u}$]
\label{D-constant-def}
Let $u^*$ be the normalization of $u$ at $p_0 = (t_0, s_0)$. We then define the following rescaling:
$$ \tilde{u}(x_1, x_2) := \frac{1}{s_0 D \left(u; p_0 \right)} u^* \left(t_0 + \sqrt{s_0 D \left(u; p_0 \right)} x_1, s_0 x_2 \right). $$
For simplicity in notation, we will occasionally denote \(\lambda_0 := \sqrt{s_0 D(u; p_0)}\), which will serve as our scaling parameter.
\end{definition}

\begin{remark}
\label{lambda_0}
Our choice of \( \lambda_0 \) is inspired by Donaldson's approach (see, e.g., equation (27) in \cite{donaldson2009constant}). Specifically, we choose \( \lambda = \lambda_0 \) so that  
\begin{equation}
\label{R-R-12-100}
\lambda \int_{t_0-\lambda}^{t_0+\lambda} u_{11}(t,0) \, dt = s_0 D(u; p_0).
\end{equation}
Since \( u_{11}(t,0) \sim 1 \) away from the vertex, this suggests considering:  
\[
\lambda_0 = \sqrt{s_0 D(u; p_0)}.
\]  
\end{remark}
	
Notice that, according to Lemma~\ref{zero-D}, as $s_0$ approaches zero (i.e., as the point $p_0$ moves towards the edge of $R$), $\lambda_0$ approaches zero. 
	
The rescaled function $\tilde{u}$ is then defined, on the rectangles
$$ \tilde{R} = \left(-\frac{\Lambda(t_0)}{2\lambda_0}, \frac{\Lambda(t_0)}{2\lambda_0}\right) \times \left(0, \frac{1}{s_0}\right), $$
and it satisfies $\tilde{u}(0,0) = 1$. 
Furthermore, a simple computation gives the following PDE for $\tilde{u}$:
\begin{equation}
\label{eq2.5}
\det D^2 \tilde{u}(x_1, x_2) = \frac{s_0^2}{\lambda_0^2} \det D^2 u(t_0 + \lambda_0 x_1, s_0 x_2) = \frac{H(t_0 + \lambda_0 x_1, s_0 x_2)}{D \left(u; p_0 \right) x_2} \qquad \text{in }\tilde{R}.
\end{equation}

\medskip
\noindent {\bf STEP II (local boundedness of the rescaled solutions):}
\noindent
In this step, we aim to prove that the rescaled solutions $\tilde{u}$ are locally bounded on $\tilde{R}$ uniformly with respect to the scaling parameter $s_0$ (and also, obviously, with respect to $\lambda_0$, due to Lemma~\ref{zero-D}).

First, we make the following simple observations about the restrictions of $\tilde{u}$ to the $x_2$ and $x_1$-axes.

\begin{lemma}
\label{UPER-LOWER-bound}
We have: 
$$
\tilde{u}(0,x_2) 
\leq 
\begin{cases}
1, & \qquad 0 \leq x_2 \leq 1, \\
k x_{2} \log x_{2},  &  \qquad 1 \leq x_2,
\end{cases}
$$
where $k>1$ can be chosen close enough to $1$, provided that $s_0$ is sufficiently small.
\end{lemma}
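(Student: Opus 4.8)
The plan is to recall the two facts we know about the trace of $u$ on the $x_2$-axis and then simply translate them through the scaling in Definition \ref{D-constant-def}. Write $g(x_2) := u(t_0, x_2)$ for the restriction of the (un-normalized) solution to the vertical line $x_1 = t_0$. Since the point $p_0 = (t_0,s_0)$ lies in $\overline{P}\setminus N_{\delta}$, the segment $\{t_0\}\times(0,2]$ meets the edge $\{x_2 = 0\}$ at a point away from the vertices; hence, by the Guillemin boundary condition \eqref{Guilliman} together with the one-dimensional asymptotics recorded in \eqref{l-r-en}--\eqref{l-r-en-1}, the function $g$ is convex on $(0,2]$ and admits the expansion $g(x_2) = c_0\, x_2 \log x_2 + (\text{smooth})$ near $x_2 = 0$, where the leading coefficient $c_0 = c_0(t_0)$ is close to $1$ uniformly, since $c_0$ equals the relevant value of $H$ up to the fixed geometric factor and $H \geq a > 0$ is bounded. (More precisely, $c_0(t_0)$ is the value appearing in \eqref{l-r-en-1} for the ODE \eqref{l-r-en} along this vertical slice; the point is only that it is bounded above by some $k>1$ arbitrarily close to $1$ once the slice is fixed.)

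Next I would unwind the normalization. By definition $u^{*}(x) = u(x) - l_{p_0}(x)$ vanishes at $p_0$ with $\nabla u^{*}(p_0) = 0$, so along the vertical line $h(x_2) := u^{*}(t_0,x_2) = g(x_2) - g(s_0) - g'(s_0)(x_2 - s_0)$ is the convex function $g$ re-centered so that $h(s_0) = 0$ and $h'(s_0) = 0$; thus $h \geq 0$, $h$ is decreasing on $(0,s_0)$ and increasing on $(s_0,\infty)$, and $h(0) = s_0 D(u;p_0)$ by the very Definition \ref{DDD-12} of $D$. The rescaled function satisfies $\tilde u(0,x_2) = \frac{1}{s_0 D(u;p_0)}\, h(s_0 x_2)$. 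Hence $\tilde u(0,0) = 1$, $\tilde u(0,1) = 0$, and $\tilde u(0,\cdot)$ is convex, decreasing on $(0,1)$ and increasing on $(1,\infty)$. Convexity and decrease on $(0,1)$ immediately give $\tilde u(0,x_2) \leq \tilde u(0,0) = 1$ for $0 \leq x_2 \leq 1$, which is the first case.

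For the second case, $1 \leq x_2$, I would use the upper bound $g(y) \leq c_0\, y\log y + C y$ valid on a fixed interval (say $y \in (0,1]$; for $y$ up to $s_0 x_2$ with $x_2 \leq 1/s_0$ this stays in $(0,1]$), together with $g'(s_0) = O(s_0\log s_0) \to 0$ and $g(s_0) = O(s_0\log s_0)\to 0$ and $s_0 D(u;p_0) \to 0$ (Lemma \ref{zero-D}). Plugging $y = s_0 x_2$ into $h(y) = g(y) - g(s_0) - g'(s_0)(y-s_0)$ and dividing by $s_0 D(u;p_0) = h(0)$, the terms $g(s_0)$ and $g'(s_0)s_0$ are lower order, the term $-g'(s_0) s_0 x_2 = o(1)\, x_2$ is absorbed, and the main term gives $\tilde u(0,x_2) \leq \frac{c_0 s_0 x_2 \log(s_0 x_2) + C s_0 x_2}{s_0 D(u;p_0)}$; since $\log(s_0 x_2) \leq \log x_2$ for $x_2 \leq 1/s_0$ and $D(u;p_0)$ is large (we are arguing by contradiction, so $D(u;p_0) \to \infty$), the denominator kills the $C s_0 x_2$ contribution and leaves $\tilde u(0,x_2) \leq k\, x_2 \log x_2$ with $k$ as close to $c_0$, hence to $1$, as we like once $s_0$ is small enough. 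The only mildly delicate point, and the one I would write out carefully, is the bookkeeping in this last estimate: making sure that the "lower order'' error terms are genuinely controlled uniformly in $u$ and in the normalization point — but this is exactly what Lemma \ref{zero-D} (which forces $s_0^{1-\alpha} \lesssim \Lambda^{\theta}(p_0)/D(u;p_0)$, hence $s_0 D(u;p_0) \to 0$) was set up to provide.
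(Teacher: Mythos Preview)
Your argument for $0\le x_2\le 1$ is correct and matches the paper: convexity together with $\tilde u(0,0)=1$, $\tilde u(0,1)=0$ gives $\tilde u(0,x_2)\le 1$ on that interval.

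For $x_2\ge 1$ there is a genuine gap. You appeal to an expansion $g(y)=c_0\,y\log y+O(y)$ for $g(y)=u(t_0,y)$ with a \emph{uniform} $O(y)$ remainder. Two remarks. First, \eqref{l-r-en}--\eqref{l-r-en-1} concern the restriction of $u$ \emph{to} an edge, where a one-dimensional ODE holds; on a transversal slice $\{x_1=t_0\}$ no such ODE is available, and the Guillemin condition directly gives $c_0=1$ exactly, not merely ``close to $1$''. Second, and this is the real issue, the smoothness of $g(y)-y\log y$ is \emph{not uniform in $u$}: obtaining uniform control on $u-x_2\log x_2$ near the edge is precisely the goal of this section. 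In fact, if $v(y):=g(y)-y\log y$ were uniformly $C^1$ one computes
\[
s_0\,D(u;p_0)=g(0)-g(s_0)+g'(s_0)s_0=s_0+\bigl[v(0)-v(s_0)+v'(s_0)s_0\bigr]=s_0+o(s_0),
\]
forcing $D(u;p_0)\to 1$; so the very hypothesis $D\to\infty$ you invoke rules out the uniform expansion you need. Lemma~\ref{zero-D} does not rescue the bookkeeping: it only bounds $D$ from \emph{above} by $o(s_0^{\gamma-1})$, whereas absorbing your error terms would require $D$ to dominate quantities of comparable order.

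The paper's proof uses an input you have not touched: the point $p_0$ was chosen to \emph{maximize} the weighted quantity $\Lambda^\theta D$ over $R$ (see \eqref{worst point}--\eqref{eq2}). Under the rescaling this maximality gives $D(\tilde u;(0,s))\le k$ with $k$ as close to $1$ as desired, for every relevant $s$. Writing this out for $\tilde g(s):=\tilde u(0,s)$ yields the first-order differential inequality $s\,\tilde g'(s)-\tilde g(s)+1\le ks$, which, with $\tilde g(1)=\tilde g'(1)=0$, integrates to $\tilde g(s)\le ks\log s+(1-s)\le ks\log s$ for $s\ge 1$.
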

	
\begin{lemma}
\label{UPER-LOWER-bound-2}
We have:
$$ \tilde{u}(x_1,0) \leq 1+C|x_1|+\frac{A}{2} x_1^2 \qquad \forall x_1  \in \left(-\frac{\Lambda(t_0)}{2 \lambda_0},\frac{\Lambda(t_0)}{2 \lambda_0} \right), $$
for some universal constant $C$.
\end{lemma}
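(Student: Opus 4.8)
The plan is to read off an upper bound for $\tilde u$ on the $x_1$-axis directly from the definition of the rescaling together with the already-established bound on $u_{11}(x_1,0)$. Recall that $\tilde u(x_1,x_2)=\frac{1}{s_0 D(u;p_0)}u^*\bigl(t_0+\lambda_0 x_1,\,s_0 x_2\bigr)$ with $\lambda_0=\sqrt{s_0 D(u;p_0)}$, so that setting $x_2=0$ gives $\tilde u(x_1,0)=\frac{1}{\lambda_0^2}u^*(t_0+\lambda_0 x_1,0)$. First I would use $u^*(t_0,0)=s_0 D(u;p_0)=\lambda_0^2$ (this is exactly Donaldson's normalization, from Definition \ref{DDD-12}) to get $\tilde u(0,0)=1$, matching the claim. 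Next, Taylor-expanding the function $x_1\mapsto u^*(t_0+\lambda_0 x_1,0)$ along the edge $\{x_2=0\}$, and using that $\partial_{11}u^*=u_{11}$ and that on the edge $u_{11}(x_1,0)=H(x_1,0)\leq A$, I obtain
\[
u^*(t_0+\lambda_0 x_1,0)\leq u^*(t_0,0)+\partial_1 u^*(t_0,0)\,\lambda_0 x_1+\tfrac{A}{2}\lambda_0^2 x_1^2,
\]
and dividing by $\lambda_0^2$ yields $\tilde u(x_1,0)\leq 1+\frac{\partial_1 u^*(t_0,0)}{\lambda_0}x_1+\frac{A}{2}x_1^2$. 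It remains only to show that the linear coefficient $\partial_1 u^*(t_0,0)/\lambda_0=\bigl(u_1(t_0,0)-u_1(t_0,s_0)\bigr)/\lambda_0$ is bounded by a universal constant.

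For that linear term I would argue exactly as in the proof of Lemma \ref{zero-D}: the tangential derivative $u_1(\cdot,0)$ along the edge is controlled because $u(x_1,0)$ is (uniformly) strictly convex and $C^{0,\alpha}$ up to the boundary by \eqref{first-bound}, so $|u_1(t_0,0)|\leq C$ uniformly; and the difference $u_1(t_0,0)-u_1(t_0,s_0)$ of the $x_1$-derivative across a vertical segment of length $s_0$ is controlled by the convexity/Hessian bounds, giving $|u_1(t_0,0)-u_1(t_0,s_0)|\leq C\,s_0^{\beta}$ for a suitable exponent — in any case this difference is $\mathrm{o}(\lambda_0)$ since $\lambda_0=\sqrt{s_0 D(u;p_0)}$ and, by the choice of $p_0$ as the near-maximizer, $s_0/\lambda_0 = \sqrt{s_0/D(u;p_0)}\to 0$. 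Hence $\partial_1 u^*(t_0,0)/\lambda_0$ is bounded by a universal constant $C$, which is the constant appearing in the statement, and the domain restriction $x_1\in(-\Lambda(t_0)/2\lambda_0,\Lambda(t_0)/2\lambda_0)$ is just the projection to the $x_1$-axis of the rectangle $\tilde R$ on which $\tilde u$ is defined.

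The one delicate point — and the step I expect to require the most care — is making the bound on the linear coefficient genuinely \emph{universal}, i.e. independent of the solution $u$ and of how close $p_0$ is to the edge. The crude bound $|u_1(t_0,0)|\le C$ needs the uniform strict convexity of $u(\cdot,0)$ along the edge (away from the vertices, which is where $N_\delta$ enters), and one must be careful that differentiating the $C^{0,\alpha}$ bound \eqref{first-bound} does not secretly cost a negative power of the distance to the vertex; restricting to $\overline P\setminus N_\delta$ and invoking the edge ODE \eqref{l-r-en}–\eqref{l-r-en-1} (which gives $u(x_1,0)=$ smooth $+$ controlled $x\log x$ terms near the endpoints) handles this. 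The estimate on the transverse difference $u_1(t_0,0)-u_1(t_0,s_0)$ is less subtle but should be stated carefully: it follows from the convexity of $u$ together with the Guillemin boundary form \eqref{Guilliman}, since $u-x_2\log x_2\in C^\infty$ forces $u_1$ to be continuous (indeed smooth) up to $\{x_2=0\}$ away from vertices, with modulus of continuity independent of $u$ by the interior estimate of Theorem \ref{somehow-standard} applied at scale $\sim s_0$. Absorbing all of these into a single universal $C$ completes the proof.
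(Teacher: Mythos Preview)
Your setup and Taylor expansion (the first paragraph) are correct and match the paper exactly: one writes $h(x_1)=\tilde u(x_1,0)$, observes $h(0)=1$, $h''(x_1)=u_{11}(t_0+\lambda_0 x_1,0)=H(t_0+\lambda_0 x_1,0)\le A$, and integrates twice to get $h(x_1)\le 1+h'(0)x_1+\tfrac{A}{2}x_1^2$. The only remaining task is to bound $h'(0)$ by a universal constant.

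The gap is in your argument for this linear coefficient. You rewrite $h'(0)=\bigl(u_1(t_0,0)-u_1(t_0,s_0)\bigr)/\lambda_0$ and then try to control the transverse difference $u_1(t_0,0)-u_1(t_0,s_0)$ directly. But this amounts to controlling $\int_0^{s_0}u_{12}(t_0,s)\,ds$, and no a~priori bound on the mixed second derivative is available at this point of the proof; obtaining uniform Hessian control near the edge is exactly the goal of the entire section, so the reasoning becomes circular. Your two fallback justifications do not close the gap either: the Guillemin condition $u-x_2\log x_2\in C^\infty(\overline P)$ does make $u_1$ continuous up to $\{x_2=0\}$ for each individual $u$, but with no \emph{uniform} modulus; and invoking Theorem~\ref{somehow-standard} ``at scale $\sim s_0$'' gives constants that blow up as $s_0\to 0$, since the constant there depends on the distance $\delta$ to $\partial P$.

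The paper bounds $h'(0)$ by an elementary argument that stays entirely in the rescaled picture and uses no interior Hessian information. Since $u^\ast\ge 0$ (it is $u$ minus its supporting affine function), one has $h(x_1)=\tilde u(x_1,0)\ge 0$ for all $x_1$ in the domain. For $s_0$ small enough the domain contains $[-1,1]$, so evaluating the Taylor upper bound at $x_1=\pm 1$ gives
\[
0\le h(\pm 1)\le 1\pm h'(0)+\tfrac{A}{2},
\]
whence $|h'(0)|\le 1+\tfrac{A}{2}$. This is the universal constant $C$ in the statement.
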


\begin{proof}[Proof of Lemma \ref{UPER-LOWER-bound}]
Due to \eqref{eq2}, we can assume
\begin{equation}
\label{eq3}
D \left(\tilde{u}; (t, s) \right) \leq k,
\end{equation}
for any $(t, s)$ in compact subsets of the upper half-plane $\mathbb{H} = \{ (x_1, x_2) \in \mathbb{R}^2 \,:\, x_2 > 0 \}$, and besides we can suppose $k$ is as close to $1$ as we please.

For simplicity, define $g(x_2) := \tilde{u}(0, x_2)$. Then, \eqref{eq3} immediately leads to the following differential inequality:
$$ x_2 g'(x_2) - g(x_2) + 1 \leq k x_2 $$
where, by the construction of $\tilde{u}$, we know that $g(1) = g'(1) = 0$. Solving for $g$ results in the following bounds:
$$ g(x_2) \leq k x_2 \log x_2 + (1 - x_2), \qquad \forall x_2 \in [1, +\infty), $$
and
\begin{equation}
\label{eq4}
g(x_2) \geq k x_2 \log x_2 + (1 - x_2), \qquad \forall x_2 \in (0,1].
\end{equation}
On the other hand, since $g(0) = 1$, $g(1) = 0$, and more importantly, $g$ is convex (i.e., $g'$ is increasing), we have $g(x_2) \leq 1$ for any $0 \leq x_2 \leq 1$.
\end{proof}

\begin{proof}[Proof of Lemma \ref{UPER-LOWER-bound-2}]
For simplicity, let $h(x_1) = \tilde{u}(x_1, 0)$. Then, $h$ is a convex function satisfying:
$$ h''(x_1) = u_{11}(t_0 + \lambda_0 x_1, 0) = H(t_0 + \lambda_0 x_1, 0) \leq A, $$
for all $x_1 \in \left(-\frac{\Lambda(t_0)}{2\lambda_0}, \frac{\Lambda(t_0)}{2\lambda_0}\right)$. Also, $h(0)=1$. Therefore, we have:
$$ h(x_1) - 1 - h'(0) x_1 = h(x_1) - h(0) - h'(0) x_1 = \int_{0}^{x_1} \int_{0}^{s} h''(t) \, dt \, ds \leq \frac{A}{2} x_1^2 \qquad \forall x_1 \in \left(-\frac{\Lambda(t_0)}{2\lambda_0}, \frac{\Lambda(t_0)}{2\lambda_0}\right). $$
Noting that for $s_0$ small enough $\frac{\Lambda(t_0)}{2\lambda_0}>1$, and since
$ \int_{-1}^{1} {\tilde{u}}_{11}(t,0) \, dt $ 
is universally bounded, it easily follows that $h'(0)$ is universally bounded as well. This completes the proof.
\end{proof}
	
Now, using the convexity of $\tilde{u}$ and leveraging Lemmas \ref{UPER-LOWER-bound} and \ref{UPER-LOWER-bound-2}, we obtain a uniform bound on the rectangle
$$ \left(-\frac{\Lambda(t_0)}{2\lambda_0}, \frac{\Lambda(t_0)}{2\lambda_0} \right) \times \left(0, \frac{1}{s_0} \right). $$
Since, according to Lemma~\ref{zero-D}, this rectangle exhausts the upper half-plane $\mathbb{H}$ as $s_0$ approaches zero, we conclude that on every compact subset of $\mathbb{H}$, the rescaled solutions $\tilde{u}$ are uniformly bounded for any scaling parameter $s_0$.

\medskip
\noindent {\bf STEP III (final contradiction by getting the limit solution):}
\noindent
Now, we are ready to prove Theorem~\ref{thm1} by contradiction. 
	
Assuming that the positive constant $\tilde{D}$ does not exist, there is a sequence of solutions $u_n$ to \eqref{eq1}, with $H = H_n$, and the sequence $H_n$ satisfies (uniformly) the aforementioned assumptions on $H$, i.e.,
$$ 0 < a \leq H_n(x_1, x_2) \leq A, $$
and the $u_n$'s are uniformly bounded in $\overline{P}$. If
$$ \sup_{x \in R} D \left(u_n; x \right) \to +\infty, $$
then, $\lambda_0 \to +\infty$, and after passing to a subsequence, the rescaled functions $\tilde{u}_n$ will converge locally uniformly to a convex function $u_{\infty}$ on $\mathbb{H}$.
This follows easily from the standard Arzelà-Ascoli compactness theorem. Specifically, it is sufficient to show the equicontinuity of $\tilde{u}_n$'s in any compact subset of $\mathbb{H}$. This equicontinuity is guaranteed by the local uniform bound on $\tilde{u}_n$'s and the convexity of $\tilde{u}_n$'s, which immediately leads to local uniform Lipschitz estimates for $\tilde{u}_n$'s.
	
On the other hand, since the solutions $\tilde{u}_n$ can also be interpreted as viscosity solutions, it follows from equation \eqref{eq2.5} that $u_{\infty}$ is nowhere strictly convex on $\mathbb{H}$. More precisely, suppose the contrary; then there exists a ball $B_r(x)$ and an affine-linear function $l$ such that $u_{\infty} - l$ vanishes at $x$, but is strictly positive on the boundary of $B_r(x)$. The functions $\tilde{u}_n$ satisfy equation \eqref{eq2.5}, with the right-hand side tending to zero. Also, $\tilde{u}_n(x) - l_{x}(x) \to 0$ and $\tilde{u}_n - l_{x} \geq \delta > 0$, say, on $\partial B_r(x)$. Now, by applying the Alexandrov maximum principle, we obtain: 
$$ \left| \tilde{u}_n - l_{x} \right|^n \leq C(r) \int_{B_r(x)} \det D^2 \tilde{u}_n \to 0, $$
which implies that $u_{\infty} - l_{x}$ becomes zero at some point on $\partial B_r(x)$, leading to a contradiction and completing the proof.
	
\begin{lemma}
There exists a non-zero number $\eta$ such that $u_{\infty}$ vanishes on the half-plane $x_2 = \eta x_1+1$.
\end{lemma}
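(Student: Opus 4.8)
I would argue as follows. The plan is to locate the global minimum of $u_{\infty}$, to identify the contact set $K=\{u_{\infty}=0\}$ there, and then to combine Caffarelli's structure theorem for generalized solutions of $\det D^2u=0$ with the bound $D(u_{\infty};\cdot)\le 1$ to force $K$ to reach the edge $\partial\mathbb{H}=\{x_2=0\}$; the line through $(0,1)$ and that contact point is then automatically of the form $\{x_2=\eta x_1+1\}$ with $\eta\ne 0$.

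First I record the properties of $u_{\infty}$ that the argument uses. As a locally uniform limit on $\mathbb{H}$ of the nonnegative functions $\tilde{u}_n$, which are normalized at $(0,1)$, the function $u_{\infty}$ is convex, $u_{\infty}\ge 0$, and the zero affine function supports $u_{\infty}$ at $(0,1)$; hence $(0,1)$ is the global minimum of $u_{\infty}$ and $K$ is exactly this contact set, a closed convex subset of $\overline{\mathbb{H}}$ containing $(0,1)$. By the uniform boundary control of Step~II and Lemma~\ref{UPER-LOWER-bound-2}, $u_{\infty}$ extends continuously up to $\partial\mathbb{H}$ with trace $h_{\infty}(x_1):=u_{\infty}(x_1,0)=\lim_n\tilde{u}_n(x_1,0)$, and $a\le h_{\infty}''\le A$; so $h_{\infty}$ is uniformly convex, in particular unbounded, with $h_{\infty}(0)=1$ (hence $(0,0)\notin K$). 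Finally, passing to the limit in \eqref{eq3} (with $k=k_n\to 1$ on each compact subset of $\mathbb{H}$, by \eqref{eq2}) gives $D(u_{\infty};(t,s))\le 1$ for every $(t,s)\in\mathbb{H}$; here, at a minimum point $p$ of $u_{\infty}$, one reads $D(u_{\infty};p)$ off the zero supporting plane, so $D(u_{\infty};p)=u_{\infty}(p_1,0)/p_2$.

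The substantive step is to show that $K$ meets $\partial\mathbb{H}$. Since $u_{\infty}$ is nowhere strictly convex on $\mathbb{H}$, it is a generalized solution of $\det D^2u_{\infty}=0$, so by Caffarelli \cite[Theorem~1]{caffarelli1990localization} --- exactly as invoked in the proof of Theorem~\ref{somehow-standard} --- the contact set $K$ has no extreme point lying in $\{x_2>0\}$. Suppose, for contradiction, that $K\subseteq\{x_2>0\}$. Then $K$ is a nonempty closed convex set with no extreme point at all, hence it contains a full line $L$; since $L\subseteq\{x_2>0\}$, the line $L$ is horizontal, $L=\{x_2=c\}$ for some $c>0$. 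Every point $(t,c)\in L\subseteq K$ is a minimum of $u_{\infty}$, so $D(u_{\infty};(t,c))=h_{\infty}(t)/c$; the bound $D(u_{\infty};\cdot)\le 1$ then forces $h_{\infty}(t)\le c$ for all $t\in\mathbb{R}$, contradicting the unboundedness of $h_{\infty}$. Hence there is a point $(a^{*},0)\in K\cap\partial\mathbb{H}$, and $a^{*}\ne 0$ because $h_{\infty}(a^{*})=0\ne 1=h_{\infty}(0)$. Since $K$ is convex and contains both $(a^{*},0)$ and $(0,1)$, it contains the segment joining them, so $u_{\infty}\equiv 0$ on that segment; this segment lies on $\{x_2=\eta x_1+1\}$ with $\eta:=-1/a^{*}\ne 0$ (whose trace on $\overline{\mathbb{H}}$ is the half-line issuing from $(a^{*},0)$), which proves the lemma.

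I expect the genuinely delicate points to lie not in the convex-geometry argument above but in what feeds it: that the rescaled solutions converge up to $\partial\mathbb{H}$ to a function whose trace is the uniformly convex $h_{\infty}$, and that $D\le 1$ survives passage to the limit --- both resting on the uniform estimates of Step~II, Lemma~\ref{zero-D}, and the worst-point normalization of Step~I. Granting those, the step ``$K$ meets $\partial\mathbb{H}$'' is the only real content, and it is the two-dimensional counterpart of the dichotomy used by Donaldson in \cite{donaldson2009constant} and in the proof of Theorem~\ref{somehow-standard}.
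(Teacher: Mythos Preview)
Your strategy matches the paper's: exploit that $K=\{u_\infty=0\}$ has no extreme point in $\mathbb{H}$, and combine this with the $D$-bound and the uniform convexity of the boundary trace $h_\infty$. There is, however, a real gap at the last step. You conclude only that $u_\infty$ vanishes on the \emph{segment} from $(0,1)$ to $(a^*,0)$; the lemma asserts vanishing on the entire half-line $\{x_2=\eta x_1+1\}\cap\overline{\mathbb{H}}$, i.e.\ also on the unbounded ray issuing from $(0,1)$ away from $(a^*,0)$. This matters for what comes next: the proof of Theorem~\ref{thm1} uses vanishing along the full half-line (for every $x_1<1$, after the WLOG $\eta=-1$) to force $\tilde u_n(x_1,0)\to 1-x_1$ on all of $(-\infty,1]$, and it is that linearity that contradicts $h_\infty''\ge a>0$. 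With only the segment one obtains merely $h_\infty(x_1)\le 1-x_1$ on $[0,1]$ together with $h_\infty(0)=1$, $h_\infty(1)=0$, and this is perfectly compatible with uniform convexity (take $h_\infty(x_1)=1-(1+\tfrac{a}{2})x_1+\tfrac{a}{2}x_1^2$).

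The paper closes this gap by arguing that a full \emph{line} through $(0,1)$ lies in $Z$: since $Z$ has no extreme point it contains a line, and translating by the recession direction puts a parallel line through $(0,1)$ inside $Z$; the vertical option is then excluded by the lower bound \eqref{eq4}, and the horizontal one by an argument close in spirit to yours. Your route can be completed the same way: knowing that every extreme point of $\overline{K}$ lies on $\{x_2=0\}$ while $(0,1)\in K$, either $\overline{K}$ contains a line (then the parallel through $(0,1)$ lies in $K$, non-vertical by \eqref{eq4}, non-horizontal by your $h_\infty$-unboundedness argument), or $\overline{K}$ is line-free and $\overline{K}=\mathrm{conv}(\mathrm{ext}\,\overline{K})+\mathrm{rec}\,\overline{K}$ with $\mathrm{conv}(\mathrm{ext}\,\overline{K})\subset\{x_2=0\}$, forcing a recession direction with positive $x_2$-component and hence a genuine ray from $\{x_2=0\}$ through $(0,1)$. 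One further minor point: your limit inequality $D(u_\infty;(t,c))\le 1$ at minimum points of $u_\infty$ does not follow by simply letting $n\to\infty$ in \eqref{eq3}, since $\partial_2\tilde u_n(t,c)$ need not tend to $0$; as the paper does when excluding $\eta=0$, one should first locate nearby points $(t,c_n)$ with $\partial_2\tilde u_n(t,c_n)\to 0$ and then pass to the limit.
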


\begin{proof}
Let's define
$$ Z := \left\{ (x_1, x_2) \in \mathbb{H} \, : \, u_{\infty}(x_1, x_2) = 0 \right\}. $$
Given that the functions $\tilde{u}_n$'s are normalized such that their minimum occurs at the point $p_0 = (0, 1)$, it can be concluded that $Z$ is a convex set containing $p_0$.
		
Since the function \( u_{\infty} \) is nowhere strictly convex on \( \mathbb{H} \), \( Z \) cannot have extreme points. This implies that there must exist a line passing through \( p_0 \) whose intersection with \( \mathbb{H} \) is contained in \( Z \).
		
From equation \eqref{eq4}, we know that the functions \( \tilde{u}_n(0, x_2) \) are bounded below by \( x_2 \log x_2 - x_2 + 1 \) for \( 0 < x_2 < 1 \). Due to this lower bound, \( Z \) cannot contain the line segment \(\{x_1 = 0, x_2 > 0\}\). Therefore, \( Z \) must contain the line
$$ \Gamma_{\eta} := \{(x_1, x_2) \, : \, x_2 = \eta x_1 + 1, x_2 > 0\}, $$
for some \(\eta \in \mathbb{R}\).
		
To show that \(\eta \neq 0\), we proceed as follows:
		
We normalize the functions so that the \(x_1\) derivative of \(\tilde{u}_n\) differs by 1 at the points \((\pm 1, 0)\). As a result, there is a constant \(c\) such that \(\tilde{u}_n(c, 0) > 2\). Let's assume that \(Z\) contains the line \(\Gamma_0\), which means that for \(x_2 = 1\), we would have \(\tilde{u}_n(c, 1) \to 0\) as \(n \to +\infty\). Therefore, there exists a sequence \(c_n\) converging to 1 such that the \(x_2\) derivatives of \(\tilde{u}_n\) at \((c, c_n)\) approaches $0$. This, however, implies that the estimate provided by \eqref{eq3} would be violated for any \(k < 2\), which contradicts the assumption that \(D(u; (t, s))\) is uniformly bounded. Hence, \(\eta \neq 0\).
\end{proof}

\begin{proof}[Proof of Theorem \ref{thm1}]
Without lose of generality, we can assume that $u_{\infty}$ vanishes along the half-line $x_2 = -x_1 + 1$.
		
We claim that $\tilde{u}_n(x_1,0)$ converges locally uniformly to $-x_1 + 1$ on $(-\infty, 1]$.
		
To demonstrate this, consider a given $x_1 < 1$. We can find a sequence $d_n$ converging to $-x_1 + 1$ such that both $\tilde{u}_n$ and $\frac{\partial \tilde{u}_n}{\partial x_2}$, evaluated at $(x_1, d_n)$, converge to $0$ as $n \to +\infty$. Consequently, from equation \eqref{eq3}, we obtain
\begin{equation}
\label{linear-bound}
\tilde{u}_n (x_1,0) \leq k_n (-x_1+1) + \epsilon_n
\end{equation}
where $\epsilon_n \to 0$ and $k_n \to 1$.
		
Moreover, by construction, $\tilde{u}_n (0,0) = 1$, and due to convexity, $\tilde{u}_n (x_1,0)$ converges to $-x_1 + 1$ uniformly for each $x_1$ within any compact subset of $(-\infty, 1)$. Additionally, since the solutions are bounded in a neighborhood around the point $(1,0)$, convexity guarantees that this convergence is uniform up to $x_1 = 1$.
		
On the other hand, we have:
$$ \tilde{u}_{n}(x_1,0) = \tilde{u}_{n}(0,0) + \tilde{u}_{n}^{\prime}(0,0) x_1 + \int_{0}^{x_1} \int_{0}^{s} H_n(t_0 + \lambda_0 t, 0) \, dt ds. $$
After passing to a subsequence, we can assume that $H_n(t_0 + \lambda_0 t, 0) \to \alpha \geq a$ and $\tilde{u}_{n}^{\prime}(0,0) \to \beta$ as $n \to +\infty$. Therefore,
\begin{equation}
\label{quadratic-bound}
\tilde{u}_{n}(x_1,0) \to 1 + \beta x_1 + \frac{\alpha}{2} x_1^2.
\end{equation}
However, \eqref{quadratic-bound} clearly contradicts \eqref{linear-bound} for any $x_1$ close to $1$.
\end{proof}
	
\subsection{Proof of Proposition \ref{proposition-edges} } 
In this subsection, we complete the proof of Proposition~\ref{proposition-edges} using the upper bound on $D$. We begin by proving that the solution satisfies a uniform modulus of strict convexity (Theorem~\ref{G-O-Sec}). However, since we are in dimension $n=2$, Heinz’s theorem (\cite{heinz1959differentialungleichung}) can be directly applied to obtain such an estimate. Nonetheless, we believe that the argument in Theorem~\ref{G-O-Sec} can be adapted to achieve a similar result in higher dimensions. Therefore, we present its line of arguments here only for interested readers.
	
Let fix a positive number \( \epsilon \) and without loss of generality let \( p =(t,s)= (0, \epsilon) \). We assume that \( u \) is normalized at \( p = (0, \epsilon) \). The goal is to study the geometry of the sections near the edge.
	
For $\delta>0$ and the point $q=(q_1,q_2)$ away from vertices, define:
$$ \Delta(\delta, q):= \int_{-\delta+q_1}^{\delta+q_1} u_{11}(t, q_2) \, dt = u_1(\delta+q_1, q_2) - u_1(-\delta+q_1, q_2). $$
	
\begin{lemma}
\label{lem-lower}
There exists $c_1, c_2>0$ such that if \( \Delta(\delta, q) \leq c_1 \frac{q_2}{\delta} \),
then
$$ \det D^2 u \left(q_1, \frac{q_2}{2} \right) \leq \frac{c_2}{\delta^2}$$ for any point $q=(q_1,q_2)$ away from vertices with $q_2$ small enough.
\end{lemma}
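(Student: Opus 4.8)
The plan is to run a rescaling–contradiction argument in the spirit of the proof of Theorem~\ref{thm1}. The key point is that a too small value of $\Delta(\delta,q)$ forces $\delta\le C\sqrt{q_2}$ for a universal $C$; once this is known the conclusion is immediate from the equation \eqref{eq1}, since then $\det D^2 u(q_1,q_2/2)=2H(q_1,q_2/2)/q_2\le 2A/q_2\le 2AC^2/\delta^2$, so one may take $c_2:=2A\max(1,C^2)$ (and $c_1$ may be fixed, say $c_1=1$).

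I would first normalize: set $w:=u-l_q$, the supporting-plane normalization of $u$ at $q=(q_1,q_2)$, so $w\ge 0$, $w(q)=0$, $\nabla w(q)=0$ and $\det D^2 w=H/x_2$. Since $t\mapsto w_1(t,q_2)$ is nondecreasing and $w_1(q_1,q_2)=0$, the hypothesis $\Delta(\delta,q)=w_1(q_1+\delta,q_2)-w_1(q_1-\delta,q_2)\le c_1 q_2/\delta$ gives $|w_1(t,q_2)|\le c_1 q_2/\delta$ for $|t-q_1|\le\delta$, and integrating from $q_1$ yields
\[
0\le w(t,q_2)\le 2c_1 q_2 \qquad\text{on}\quad \sigma:=[q_1-\delta,\,q_1+\delta]\times\{q_2\}.
\]
By Theorem~\ref{thm1} (applied at $q$, which is away from the vertices) and Definition~\ref{DDD-12}, $w(q_1,0)=q_2\,D(u;q)\le \tilde D q_2$. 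The extreme points of the flat triangle $T:=\operatorname{conv}(\sigma\cup\{(q_1,0)\})$ are the two endpoints of $\sigma$ and the apex $(q_1,0)$, so convexity gives $0\le w\le M:=\max(2c_1,\tilde D)\,q_2$ on $T$; note $T$ has base $2\delta$, height $q_2$, pinches linearly to a single point of the edge, and lies in $P$ for $q_2$ small.

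It remains to prove $\delta\le C\sqrt{q_2}$ (with $c_1=1$), which I would do by contradiction. If it fails there are solutions $u_n$ of \eqref{eq1}, with $a\le H_n\le A$, points $q^{(n)}=(q_1^{(n)},q_2^{(n)})$ away from the vertices, and scales $\delta_n$ with $\Delta(\delta_n,q^{(n)})\le q_2^{(n)}/\delta_n$ and $\delta_n/\sqrt{q_2^{(n)}}\to\infty$; since $\delta_n$ is bounded this forces $q_2^{(n)}\to 0$. With $w_n$ the normalization of $u_n$ at $q^{(n)}$, set
\[
\hat w_n(y_1,y_2):=\frac{1}{q_2^{(n)}}\,w_n\!\big(q_1^{(n)}+\delta_n y_1,\ q_2^{(n)} y_2\big).
\]
By the previous paragraph $0\le \hat w_n\le M^\ast:=\max(2,\tilde D)$ on the fixed triangle $T_\infty:=\operatorname{conv}([-1,1]\times\{1\}\cup\{(0,0)\})$, whereas \eqref{eq1} rescales to $\det D^2\hat w_n=\dfrac{\delta_n^2}{q_2^{(n)}}\cdot\dfrac{H_n(q_1^{(n)}+\delta_n y_1,\,q_2^{(n)}y_2)}{y_2}$, which is at least $\tfrac{4a}{3}(\delta_n/\sqrt{q_2^{(n)}})^2$ on $K:=\{2/3\le y_2\le 3/4,\ |y_1|\le 1/2\}\subset\subset T_\infty$. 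This cannot happen: $\hat w_n$ is a smooth convex function bounded by $M^\ast$ on $T_\infty$ and $K$ lies at a fixed distance $c_0>0$ from $\partial T_\infty$, so $\hat w_n$ is $(M^\ast/c_0)$-Lipschitz on $K$, hence $\int_K\det D^2\hat w_n\,dy=|\nabla\hat w_n(K)|\le\pi(M^\ast/c_0)^2$, while the lower bound makes $\int_K\det D^2\hat w_n\,dy\to\infty$.

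The step I expect to be the main obstacle is this contradiction — more precisely, making rigorous that after rescaling the functions remain trapped below the tent over all of $T_\infty$ (this is exactly where the hypothesis and the uniform bound on $D$ from Theorem~\ref{thm1} are used) and that the blow-up of $\det D^2\hat w_n$ takes place on a set compactly contained in the interior of $T_\infty$. Everything else — the reduction to the model edge picture near $q$, and the final bookkeeping with $c_1,c_2$ — is elementary.
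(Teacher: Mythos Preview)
Your argument is correct and is essentially the paper's own proof, only recast as a contradiction: you rescale by $(\delta,q_2)$, use the hypothesis together with the bound $D(u;q)\le\tilde D$ from Theorem~\ref{thm1} to trap the rescaled function on the triangle $T_\infty$, and then compare the bounded Monge--Amp\`ere mass of a compact subset with $\det D^2\hat w_n\sim\delta^2/q_2$ to force $\delta^2\le Cq_2$. The paper does exactly this but directly rather than by contradiction (it simply observes that the gradient image of the ball is bounded while $\det D^2\tilde u\sim\delta^2/\epsilon$ on it, hence $\delta^2/\epsilon$ is bounded), so your contradiction/sequence machinery is unnecessary; otherwise the two arguments coincide step for step.
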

	
\begin{proof}
Without loss of generality, we may assume that $q=(0,\epsilon)$ and \( u \) is normalized at \( (0,\epsilon) \). Define 
$$ \tilde{u}(x_1, x_2) = \frac{1}{\epsilon} u^*(\delta x_1, \epsilon x_2). $$ 
Then, by the assumption of lemma, we have: 
$$ \int_{-1}^{1} \tilde{u}_{11}(x, 1) \, dx = \epsilon^{-1} \delta^2 \int_{-1}^{1} u_{11}(\delta x, \epsilon) \, dx = \epsilon^{-1} \delta \int_{-\delta}^{\delta} u_{11}(t, \epsilon) \, dt \leq c_1. $$
This implies that $ \tilde{u}_{1}(1, 1) - \tilde{u}_{1}(-1, 1) \sim 1 $, and consequently we obtain \( |\tilde{u}_{1}(x_1,1) | \) is universally bounded, say, on $[-\frac{3}{4},\frac{3}{4}]$. On the other hand, by the estimates in Lemma~\ref{UPER-LOWER-bound}, we have \( \tilde{u}(0, x_2) \leq 1 \) on \( [0,1] \). Therefore, \( \tilde{u}_2(0, x_2) \) is universally bounded, say, on \( [\frac{1}{4},\frac{3}{4}] \). Now, by convexity, it is evident that \( |\nabla \tilde{u}| \) remains bounded within a small ball of radius \( \frac{1}{4} \) centered at \( (0, \frac{1}{2}) \). Consequently, according to the definition of Alexandrov solutions, the integral 
\(
\int_{B_{\frac{1}{4}}(0,\frac{1}{2})} \det D^2 \tilde{u}(x_1, x_2) 
\)
which equals to \( |\nabla \tilde{u}(B_{\frac{1}{4}}(0,\frac{1}{2}))| \) is also bounded. Note that on \( B_{\frac{1}{4}}(0,\frac{1}{2}) \), we have: 
$$ 2a \frac{\delta^2}{\epsilon} \leq \det D^2 \tilde{u}(x_1, x_2) = \delta^2 \det D^2 u (\delta x_1, \epsilon x_2) = \frac{\delta^2 H(\delta x_1, \epsilon x_2)}{\epsilon x_2} \leq 2A \frac{\delta^2}{\epsilon}. $$ 
This together with the boundedness of the integral implies that $\frac{\delta^2}{\epsilon}$ is universally bounded; and the proof is complete.
\end{proof}

\begin{corollary}
\label{C-lower-bound}
There exists a constant \( M = M(c_2, a) > 0 \), with \( c_2 \) defined in Lemma~\ref{lem-lower}, such that for any fixed \( a_2 > M \), there exists \( a_1 > 0 \) ensuring that 
\begin{equation}
\label{new-coarse-es}
\Delta(a_2 \sqrt{\epsilon}, \epsilon)  \geq a_1 \sqrt{\epsilon}
\end{equation}
for sufficiently small \( \epsilon > 0 \) depending on \( a_2 \).
\end{corollary}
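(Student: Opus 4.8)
The plan is to obtain the lower bound on $\Delta$ by contraposing Lemma \ref{lem-lower} and playing it against the pointwise lower bound on $\det D^2 u$ coming directly from the equation \eqref{eq1}. As in the proof of Lemma \ref{lem-lower}, I would first reduce to the case $q=(0,\epsilon)$ after a translation in the $x_1$ variable, so that $\Delta(a_2\sqrt{\epsilon},\epsilon)$ stands for $\Delta(a_2\sqrt{\epsilon},(0,\epsilon))$. From \eqref{eq1} and the hypothesis $H\geq a$, evaluating at the point $(0,\epsilon/2)$ gives the universal lower bound
\[
\det D^2 u\left(0,\tfrac{\epsilon}{2}\right)=\frac{H(0,\epsilon/2)}{\epsilon/2}\geq \frac{2a}{\epsilon}.
\]

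Next I would set $M:=\sqrt{c_2/(2a)}$, with $c_2$ the constant from Lemma \ref{lem-lower}, fix any $a_2>M$, and put $a_1:=c_1/a_2>0$ (where $c_1$ is also from Lemma \ref{lem-lower}). I claim that $\Delta(a_2\sqrt{\epsilon},\epsilon)\geq a_1\sqrt{\epsilon}$ once $\epsilon$ is small enough depending on $a_2$. Suppose not; then, since $c_1\frac{\epsilon}{a_2\sqrt{\epsilon}}=\frac{c_1\sqrt{\epsilon}}{a_2}=a_1\sqrt{\epsilon}$, we would have $\Delta(a_2\sqrt{\epsilon},\epsilon)\leq c_1\frac{q_2}{\delta}$ with $q_2=\epsilon$ and $\delta:=a_2\sqrt{\epsilon}$. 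Because $\delta=a_2\sqrt{\epsilon}\to 0$ as $\epsilon\to 0$, for $\epsilon$ small (depending on $a_2$) this $\delta$ and $q=(0,\epsilon)$ lie in the regime where Lemma \ref{lem-lower} applies, so the lemma yields
\[
\det D^2 u\left(0,\tfrac{\epsilon}{2}\right)\leq \frac{c_2}{(a_2\sqrt{\epsilon})^2}=\frac{c_2}{a_2^2\,\epsilon}.
\]
Combining the two displayed inequalities gives $2a/\epsilon\leq c_2/(a_2^2\epsilon)$, i.e. $a_2^2\leq c_2/(2a)$, i.e. $a_2\leq M$, contradicting $a_2>M$. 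Hence $\Delta(a_2\sqrt{\epsilon},\epsilon)\geq a_1\sqrt{\epsilon}$ for all sufficiently small $\epsilon$, which is the assertion with $a_1=c_1/a_2$.

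There is no serious obstacle in this step; it is essentially the contrapositive of Lemma \ref{lem-lower} combined with the lower bound $H\geq a$. The only point requiring a little care is to check that $\epsilon$ can be taken small enough — allowing the threshold to depend on $a_2$ — that the scale $\delta=a_2\sqrt{\epsilon}$ falls into the range where Lemma \ref{lem-lower} is valid; this is immediate because $\delta\to 0$ together with $q_2=\epsilon\to 0$, and the point $(0,\epsilon)$ stays away from the vertices for $\epsilon$ small.
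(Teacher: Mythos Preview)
Your argument is correct and follows essentially the same route as the paper: contrapose Lemma~\ref{lem-lower} with $\delta=a_2\sqrt{\epsilon}$ and confront the resulting upper bound on $\det D^2u(0,\epsilon/2)$ with the lower bound $2a/\epsilon$ from the equation. Your version is in fact slightly tidier, since you specify $M=\sqrt{c_2/(2a)}$ and $a_1=c_1/a_2$ explicitly, whereas the paper phrases the contradiction by assuming failure for \emph{every} $a_1$ and then choosing one with $a_1a_2<c_1$.
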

\begin{proof}
We proceed by contradiction. Assume that the inequality \eqref{new-coarse-es} does not hold for any \( a_1 \) when \( \epsilon \) is sufficiently small.
Thus, for \( \delta = a_2 \sqrt{\epsilon} \), we obtain:
\[
\Delta(\delta, \epsilon)  < a_1 \sqrt{\epsilon} = a_1 \frac{a_2 \epsilon}{\delta} = a_1 \frac{a_2}{c_1} \frac{c_1 \epsilon}{\delta}
\]
for any \( a_1 > 0 \), with \( c_1 \) defined in Lemma~\ref{lem-lower}.
Choosing \( a_1 \) such that \( a_1 a_2 < c_1 \) leads to 
\(
\Delta(\delta, \epsilon) \leq \frac{c_1 \epsilon}{\delta}.
\)
By applying the result from Lemma~\ref{lem-lower}, this implies:
$$ \frac{a}{\epsilon} \leq \frac{H \left(0, \frac{\epsilon}{2} \right)}{\epsilon}=\det D^2u \left(0, \frac{\epsilon}{2} \right) \leq \frac{c_2}{\delta^2}=\frac{c_2}{a_2^2 \epsilon} $$
or, $a a_2^2 \leq c_2$.
Now, by selecting \( a_2 \) such that \( a a_2^2 > c_2 \), we arrive at a contradiction.
\end{proof}

\begin{theorem}
\label{G-O-Sec}
There exists a universal constant \( C = C(a, A) \) such that 
$$ \{ u < h \} \Subset \left\{ x_2 > \frac{\epsilon}{2} \right\}, $$
for any \( h < C \epsilon \).
\end{theorem}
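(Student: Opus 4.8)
The plan is to argue by contradiction, following the rescaling strategy of the proof of Theorem~\ref{thm1}, but exploiting one decisive difference: here the rescaled equations do \emph{not} degenerate, so the limiting object will be a genuine (non-affine) Monge--Amp\`ere solution, and Heinz's two-dimensional strict convexity theorem applies to it directly. I would first record the reduction. Since $u$ is normalized at $(0,\epsilon)$ we have $u\ge 0$, $u(0,\epsilon)=0$, and $u$ is strictly convex by Heinz's theorem \cite{heinz1959differentialungleichung} (because $\det D^2u=H/x_2>0$), so every section $\{u<h\}$ is bounded; hence the asserted compact containment can fail for a given constant $C$ only if there is a point $z\in\overline P$ with $z_2\le\epsilon/2$ and $u(z)\le C\epsilon$. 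It therefore suffices to exclude, for a suitable universal $C=C(a,A)$, the existence of such a $z$.

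So suppose this fails: for each $n$ there are a solution $u_n$ of \eqref{eq1} in the model configuration (normalized at $(0,\epsilon_n)$, with $\epsilon_n\to 0$ and $a\le H_n\le A$) and a point $z_n\in\overline P$ with $(z_n)_2\le\epsilon_n/2$ and $u_n(z_n)\le\epsilon_n/n$. I would rescale by $\lambda=\sqrt{\epsilon_n}$ in $x_1$ and by $\epsilon_n$ in $x_2$, setting $\tilde u_n(x_1,x_2):=\epsilon_n^{-1}u_n(\sqrt{\epsilon_n}\,x_1,\epsilon_n x_2)$ on $\tilde P_n:=[-2/\sqrt{\epsilon_n},\,2/\sqrt{\epsilon_n}]\times(0,\,2/\epsilon_n]$, which exhausts $\overline{\mathbb H}$. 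A computation as in the proof of Lemma~\ref{lem-lower} gives $\det D^2\tilde u_n=\tilde H_n/x_2$ with $\tilde H_n(x)=H_n(\sqrt{\epsilon_n}x_1,\epsilon_n x_2)\in[a,A]$, and $\tilde u_n$ is convex, nonnegative, and vanishes together with its gradient at $(0,1)$. The key point is the scale covariance $D(\tilde u_n;(t,s))=D(u_n;(\sqrt{\epsilon_n}t,\epsilon_n s))$, so Theorem~\ref{thm1} bounds $D(\tilde u_n;\cdot)$ uniformly on every fixed compact subset of $\overline{\mathbb H}$ once $n$ is large; feeding this into Step~II of the proof of Theorem~\ref{thm1} (the analogues of Lemmas~\ref{UPER-LOWER-bound} and \ref{UPER-LOWER-bound-2}) shows that $\tilde u_n$ is uniformly bounded on compact subsets of $\overline{\mathbb H}$. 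Convexity then gives uniform local Lipschitz bounds, so by Arzel\`a--Ascoli a subsequence $\tilde u_n\to u_\infty$ locally uniformly, with $u_\infty$ convex, nonnegative, and $u_\infty(0,1)=0$.

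Next I would pass to the limit: after a further subsequence $\tilde H_n\rightharpoonup H_\infty$ weak-$*$ in $L^\infty$ with $a\le H_\infty\le A$, and since $1/x_2$ is locally bounded on the open half-plane $\mathbb H$ the Monge--Amp\`ere measures converge there, so $u_\infty$ is an Alexandrov solution of $\det D^2u_\infty=H_\infty/x_2\ge a/x_2>0$ in $\mathbb H$. By Heinz's theorem the positivity of this measure forces $u_\infty$ to be strictly convex in $\mathbb H$, so its graph contains no line segment over the interior. On the other hand, putting $\tilde z_n:=\big((z_n)_1/\sqrt{\epsilon_n},\,(z_n)_2/\epsilon_n\big)$, we have $(\tilde z_n)_2\le 1/2$ and $\tilde u_n(\tilde z_n)\le 1/n\to 0$. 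If the $\tilde z_n$ stay bounded, a subsequence converges to $z_\infty$ with $(z_\infty)_2\le 1/2$ and $u_\infty(z_\infty)=0=u_\infty(0,1)$, so $u_\infty\equiv 0$ on the segment $[(0,1),z_\infty]$, an initial piece of which lies in $\mathbb H$; if the $\tilde z_n$ are unbounded, convexity together with $\tilde u_n(0,1)\to 0$ and $\tilde u_n(\tilde z_n)\to 0$ forces $u_\infty\equiv 0$ along a ray from $(0,1)$ into $\mathbb H$. Either way $u_\infty$ is affine along a nondegenerate segment in $\mathbb H$, contradicting Heinz's theorem; this contradiction would finish the proof.

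The step I expect to be the main obstacle is the compactness step: one must confirm that the rescaled solutions $\tilde u_n$ stay uniformly bounded on compact subsets of $\overline{\mathbb H}$, which is exactly where the uniform bound on Donaldson's quantity $D$ from Theorem~\ref{thm1} is indispensable — transported to the rescaled picture via $D(\tilde u_n;(t,s))=D(u_n;(\sqrt{\epsilon_n}t,\epsilon_n s))$ together with Lemma~\ref{zero-D}. Once $u_\infty$ is available, the fact that the rescaled right-hand sides remain comparable to $1/x_2$ — so that $u_\infty$ is a nondegenerate solution, in contrast to the affine limit that arose in the proof of Theorem~\ref{thm1} — is precisely what makes Heinz's strict convexity applicable. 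I note that one could instead avoid compactness and derive $u\ge C\epsilon$ on $\{x_2=\epsilon/2\}$ via a barrier built on the quantitative lower bound of Corollary~\ref{C-lower-bound}, but the argument above is cleaner and, in keeping with the remark preceding the statement, is the one more likely to extend to higher dimensions.
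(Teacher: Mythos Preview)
Your argument is correct and follows the same core strategy as the paper: rescale by $\tilde u(x_1,x_2)=\epsilon^{-1}u(\sqrt{\epsilon}\,x_1,\epsilon x_2)$, note that the rescaled right-hand side remains comparable to $1/x_2$ (so the limit $u_\infty$ is a genuine Monge--Amp\`ere solution, not an affine one), pass to a limit using the $D$-bound of Theorem~\ref{thm1} for compactness, and derive a contradiction from Heinz's strict convexity theorem applied to $u_\infty$.

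The one substantive difference is in how the rescaled ``bad'' point is controlled. The paper precedes the compactness step with a quantitative Step~I: using Corollary~\ref{C-lower-bound} (the tangential integral lower bound built from Lemma~\ref{lem-lower}) together with Theorem~\ref{thm1}, it shows directly that the first touching point $(x,\tfrac{\epsilon}{2})$ of the section with the line $x_2=\tfrac{\epsilon}{2}$ satisfies $|x|\le c\sqrt{\epsilon}$, so after rescaling $\tilde x_n$ is automatically bounded and the limit argument takes place on a fixed compact subset of $\{x_2\ge\tfrac14\}$. You skip Step~I and instead split into cases: if $\tilde z_n$ is bounded you get a segment in $\mathbb H$ on which $u_\infty\equiv 0$; if unbounded, convexity along $[(0,1),\tilde z_n]$ forces $u_\infty\equiv 0$ on a horizontal ray at height $x_2=1$. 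Either way Heinz gives the contradiction. This shortcut is valid in dimension two and makes Lemma~\ref{lem-lower} and Corollary~\ref{C-lower-bound} unnecessary for the present theorem. The paper's Step~I, by contrast, yields a Heinz-free bound on the tangential extent of the section --- this is precisely the ingredient the authors flag (in the remark preceding the theorem) as the part they expect to generalize to higher dimensions, where your convexity dichotomy would still work but the final appeal to Heinz would not.
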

	
\begin{proof}
\medskip
\noindent {\bf STEP I:}
\noindent 
Let \( u \) be normalized at \( (0, \epsilon) \), and assume that \( h \) is the smallest positive number such that \( \{ u \leq h \epsilon \} \) intersects the line \( y = \frac{\epsilon}{2} \) at the point \( (x, \frac{\epsilon}{2}) \). We claim that there exists a uniform positive constant \( c \) such that \( |x| \leq c \sqrt{\epsilon} \). Without loss of generality, we assume that \( x \geq 0 \).
We proceed by contradiction. Assume that for any \( a_2 > M \) (where \( M \) is as defined in Corollary~\ref{C-lower-bound}), we have  
\begin{equation}
\label{x>>!}
x \geq a_2 \sqrt{\epsilon}.
\end{equation}  
Since we have  
\[
u\left(0, \frac{\epsilon}{2}\right) - h \epsilon = u\left(0, \frac{\epsilon}{2}\right) - u\left(x, \frac{\epsilon}{2}\right) = \int_0^x \int_0^t u''\left(s, \frac{\epsilon}{2}\right) \, ds \, dt,
\]  
applying Corollary~\ref{C-lower-bound}, we obtain:  
\begin{align*}  
u\left(0, \frac{\epsilon}{2}\right) - h \epsilon &\geq  \int_{2M\sqrt{\epsilon}}^x \int_0^t u''\left(s, \frac{\epsilon}{2}\right) \, ds \, dt \\  
&= \int_{2M\sqrt{\epsilon}}^x \left( u'\left(t, \frac{\epsilon}{2}\right) - u'\left(0, \frac{\epsilon}{2}\right) \right) \, dt \\  
&\geq \left(x - 2M\sqrt{\epsilon} \right) \Delta\left(M \sqrt{\epsilon}, \frac{\epsilon}{2}\right) \\  
&\geq a_1 \sqrt{\epsilon} \left(x - 2M\sqrt{\epsilon} \right).
\end{align*}  
On the other hand, due to the convexity of \( u \) and Theorem~\ref{thm1}, we know that  
\[
u\left(0, \frac{\epsilon}{2}\right) \leq u(0, 0) = \epsilon D\left(u; (0, \epsilon)\right) \leq \epsilon \tilde{D}.
\]  
Combining this with the previous estimate, we obtain  
\[
a_1 \sqrt{\epsilon} x \leq \epsilon \tilde{D} - \epsilon h + 2Ma_1\epsilon \leq \epsilon \left( \tilde{D} + 2Ma_1 \right).
\]  
This, together with \eqref{x>>!}, implies that  
\[
a_2 \leq \frac{2Ma_1 + \tilde{D}}{a_1},
\]  
which contradicts the assumption that \( a_2 \) can be arbitrarily larger than \( M \).
		
Now, we proceed with the remainder of the proof using the following rescaling:  
\[
\tilde{u}(x_1, x_2) = \frac{1}{\epsilon} u^*(\sqrt{\epsilon} x_1, \epsilon x_2),
\]  
applied to the solution \( u \). Note that the section \( \{\tilde{u} \leq h \} \) intersects the line \( y = \frac{1}{2} \) at the point \( (\tilde{x}, \frac{1}{2}) \). We have already established that \( |\tilde{x}| \leq C \) uniformly.
		
\medskip
\noindent {\bf STEP II:}
\noindent    
Using the uniform bound established in STEP I, we proceed with the proof of the theorem. We argue by contradiction. Suppose there exists a sequence \(\{u_n\}\), \(\{\epsilon_n\}\), and \(\{h_n\}\) such that the set \(\{u_n < h_n \epsilon_n\}\) intersects the line \(y = \frac{\epsilon_n}{2}\) at the point \((x_n, \frac{\epsilon_n}{2})\), where \(h_n, \epsilon_n \to 0\).  
Applying the scaling from STEP I, we obtain a sequence of rescaled functions \(\tilde{u}_n\) and corresponding points \((\tilde{x}_n,\frac{1}{2})\).  
First, observe that \(\tilde{u}_n(t, \frac{1}{2})\) attains a minimum at \( t = \tilde{x}_n \), satisfying the following conditions:  
\begin{itemize}
\item \( \tilde{u}_n(\tilde{x}_n, \frac{1}{2}) = h_n \);
\item \(\tilde{u}_n'(\tilde{x}_n, \frac{1}{2}) = 0 \);
\item \( \tilde{u}_n(t,\frac{1}{2})\geq h_n\).
\end{itemize}
After passing to a subsequence, we may assume that \(\tilde{x}_n \to \tilde{x}_{\infty}\). Moreover, by the definition of \(x_n\), we obtain:  
\[
\tilde{u}_n\left(x_n,\frac{1}{2}\right) = h_n \to 0.
\]
On the other hand, since  
\begin{equation}
\label{eq5}
\tilde{u}_n(0,0) = \epsilon_n D \left(u_n; (0,\epsilon) \right) \leq \epsilon_n \tilde{D} \to 0,
\end{equation}
a similar argument to those in Lemmas \ref{UPER-LOWER-bound} and \ref{UPER-LOWER-bound-2} yields a uniform bound for \(\tilde{u}_n\) on the set \(\left[-1,1\right] \times \left[\frac{1}{4}, 1\right]\), independent of \(\epsilon_n\).
		
Additionally, since \(\tilde{u}\) is normalized at \((0,1)\), and  
\[
\det D^2 \tilde{u}(x_1, x_2) = \epsilon \det D^2 u(\sqrt{\epsilon} x_1, \epsilon x_2) = \frac{H(\sqrt{\epsilon} x_1, \epsilon x_2)}{x_2},
\]  
we obtain the inequality  
$$ 4a \leq \det D^2 \tilde{u}(x_1, x_2) \leq 4A \qquad \text{on} \quad \left(\frac{-1}{\sqrt{\epsilon}}, \frac{1}{\sqrt{\epsilon}}\right) \times \left(\frac{1}{4}, \frac{1}{\epsilon}\right). $$  
Since \(\tilde{u}_n\) converges locally uniformly to \(u_{\infty}\) on \([-1,1] \times [\frac{1}{2},1]\) as \(n \to +\infty\), we conclude that  
\[
4a \leq \det D^2 u_{\infty} \leq 4A \qquad \text{in }(-1,1) \times \left(\frac{1}{2},1\right).
\]  
However, \(u_{\infty}\) is identically zero on the line segment connecting the points \((0,1)\) and \(({\tilde{x}}_{\infty}, \frac{1}{2})\), which contradicts the strict convexity of \(u_{\infty}\), as established by a well-known theorem in dimension \(n=2\) by Heinz (\cite{heinz1959differentialungleichung}).  
This contradiction completes the proof of the claim.
\end{proof}
	
Now, we can apply Caffarelli's celebrated regularity theory (\cite{zbMATH04154994}) to establish Proposition~\ref{proposition-edges}.

\begin{proof}[Proof of Proposition \ref{proposition-edges}]
Assume that \(u\) is normalized at the point \((t,\epsilon)\) and define
\[
\tilde{u}(x_1,x_2)
=
\frac{1}{\epsilon}\,
u^*(t+\sqrt{\epsilon}\,x_1,\epsilon x_2).
\]
Then \(\tilde u\) is defined on the ball \(B_{1/2}((0,1))\). Moreover,
\(\tilde u\) is uniformly bounded and strictly convex. The strict convexity
follows from Heinz's theorem \cite{heinz1959differentialungleichung}. Hence
we may apply Caffarelli's interior regularity theory
(\cite{zbMATH04154994}; see also \cite[Lemma 4.41]{zbMATH06669881}).

Under this scaling, we have
\[
\det D^2\tilde u(x_1,x_2) = \frac{H(t+\sqrt{\epsilon}x_1,\epsilon x_2)}{x_1x_2} \in C^{0,\alpha}\left(B_{1/2}((0,1))\right).
\]
Applying Caffarelli’s estimate to the Hessian at the point $(0,1)$,
\[
D^2\tilde u(0,1) = 
\begin{bmatrix}
u_{11}(t,\epsilon) & \sqrt{\epsilon}\,u_{12}(t,\epsilon)\\
\sqrt{\epsilon}\,u_{12}(t,\epsilon) & \epsilon\,u_{22}(t,\epsilon)
\end{bmatrix},
\]
we obtain the following bounds:
\[
C^{-1} I \leq D^2 \tilde u(0,1) \leq C I,
\]
where $I$ denotes the identity matrix, and the constant $C = C\big(P, a, A, \|H\|_{C^{0,\alpha}(\overline P)}, \alpha, \|u\|_{C^0(\overline P)}\big)$ is independent of $\epsilon$.

Returning to the original variables, where 
\[
D^2u(t,\epsilon) = \begin{bmatrix} u_{11}(t,\epsilon) & u_{12}(t,\epsilon) \\ u_{12}(t,\epsilon) & u_{22}(t,\epsilon) \end{bmatrix},
\]
we deduce that
\[
C^{-1} \begin{bmatrix} 1 & 0 \\ 0 & \epsilon^{-1} \end{bmatrix} \leq D^2 u(t,\epsilon) \leq C \begin{bmatrix} 1 & 0 \\ 0 & \epsilon^{-1} \end{bmatrix}.
\]
Finally, noting that $D^2\mathbf{u}_0(t,\epsilon) \sim \operatorname{diag}(1, \epsilon^{-1})$ completes the proof.
\end{proof}

\begin{remark}
We do not need to have a priori control on the modulus of strict convexity in the preceding argument, as we are in dimension $n=2$. However, in higher dimensions, one needs to have such an a priori estimate in order to apply Caffarelli's result (\cite{zbMATH04154994}).
\end{remark}
	
\section{Regularity near the Vertex I}
\label{section--3}
	
In this section we apply similar ideas to extend the regularity of solutions to the vertices. This allows us to complete the proof of Theorem~\ref{main-theorem}. The main goal of this section is proving the follwing.

\begin{proposition}
\label{proposition-vertices}
Let \( u \) be a solution to \eqref{MA-0000}. There exists a constant 
$
C = C(P, a, A, \|H\|_{C^{0,\alpha}(\overline{P})},  \alpha, \|u\|_{C^0(\overline{P})})
$
such that
\[
C^{-1} D^2 \textbf{u}_0 \leq D^2 u \leq C D^2 \textbf{u}_0, \qquad \text{in }N,
\]  
where \( N \) is sufficiently small neighborhood of the vertices of \( P \).
\end{proposition}
	
Our strategy is similar to the one outlined in Section \ref{section--2}. Roughly speaking, We would like to prove Theorem~\ref{thm1} with a constant that is independent of the distance to the vertices. Without loss of generality, we may assume that we are close to the vertex $(0,0)$ where the polytope $P$ around the vertex $(0,0)$ is locally given by $x_1,x_2>0$. The first step is to estimate $D$ on the diagonal (Theorem~\ref{thm1V}). Once, we have done that, we conclude that $D$ is uniformly bounded for any point $(p_1,p_2)$ such that $p_1 \sim p_2 \sim 0$. The next step is to show that $D$ is bounded where $p_1 \gg p_2$ (Theorem~\ref{thm1V2}).
	
\subsection{Regularity along the diagonal near the vertices}
	
Define $Q:=Q_2=\{x = (x_1, x_2) \in \mathbb{R}^2 \, : \, 0 < x_1, x_2 < 2 \}$ and assume that $u:Q \to  \mathbb{R}$, $u=u(x_1,x_2)$, is a smooth strictly convex function satisfying the Guillemin boundary condition $$ u(x_{1},x_{2})-x_{1} \log x_{1} -x_{2}\log x_{2} \in C^{\infty}(\overline{Q}). $$
Also, assume that $u$ satisfies again the following Monge-Amp\'ere equation 
\begin{equation}
\label{eq1V}
\det D^2u \left(x_1,x_2 \right) = \frac{H(x_1,x_2)}{x_1 x_{2}}
\end{equation}
where $H :\overline{Q} \to \mathbb{R}$ is continuous and satisfying 
$ 0 < a \leq H(x_{1},x_{2}) \leq A$,
for some constants $a,A$.
Then, the equation \eqref{eq1V} implies that:
\begin{equation}
\label{restriction-x_1-x_2}
u_{11}(x_{1},0)=\frac{H(x_{1},0)}{x_1} \quad \text{and} \quad u_{22}(0,x_{2})=\frac{H(0,x_{2})}{x_2}.
\end{equation}
Moreover, for  
$$ v(x_1,x_2):=u(x_1,x_2)-x_1 \log x_1 -x_2 \log x_2 $$  
if necessary, by subtracting an affine function, one can confirm that \( v \) satisfies the following Dirichlet boundary value problem:  
\begin{equation}
\label{v-equation-1}
\begin{cases}
\left( x_1 v_{11} +1 \right) \left( x_2 v_{22} + 1 \right) - x_1x_2v_{12}^2 = H(x_1,x_2), \quad & \text{in }Q, \\  
v(0,0) = |\nabla v(0,0)| = 0, \quad  
v_{11}(x_1,0) = \dfrac{H(x_1,0)-1}{x_1}, \quad  
v_{22}(0,x_2) = \dfrac{H(0,x_2)-1}{x_2}.
\end{cases}
\end{equation}
As Rubin demonstrated in \cite{rubin2015monge}, equation \eqref{v-equation-1} admits a solution \( v \in C^{\infty}\left(\overline{Q}\setminus\{\textbf{0}\}\right) \cap C^{0,\theta}(\overline{Q}) \) for some \( \theta \in (0,1) \). Building on this, Huang in \cite{huang2023guillemin} constructed barrier functions to establish that \( v \in C^{0,1}(\overline{Q}) \). However, the proof in \cite[Lemma 2.4]{huang2023guillemin} relies on the assumption that \( H \in C^1(\overline{Q_1}) \). We adapt this approach with a slight modification to the barrier functions, ensuring that the proof remains valid even for functions \( H \) that are only  Hölder continuous. This will be the subject of Lemma~\ref{Barrier-modified-Huang}.

Before proceeding with our main objectives, we will first present Definition \ref{Donaldson-E-constant} by Donaldson (\cite{donaldson2009constant}). The result of Lemma~\ref{Barrier-modified-Huang} (which shows the Lipschitz continuity of solutions to \eqref{v-equation-1} at the boundary), given the Hölder regularity of the right-hand side function \( H \), will immediately ensure the uniform boundedness of the \( E \) for the solutions \( u \) of \eqref{eq1V} with the same right-hand side function \(H\).
	
Near the vertex point \((0, 0)\), for $\epsilon>0$, we normalize our solutions at the point \((\epsilon, \epsilon)\), i.e.
$$ 
u^*(x_{1},x_{2}) = u(x_{1},x_{2}) - u(\epsilon,\epsilon) - u_{1}(\epsilon,\epsilon)(x_{1} - \epsilon) - u_{2}(\epsilon,\epsilon)(x_{2} - \epsilon).
$$
	
\begin{definition}[Donaldson's notion of $E$ of $u$ at $(\epsilon, \epsilon)$]
\label{Donaldson-E-constant} 
Let $\epsilon$ be a positive number and assume that $u$ is normalized at $p=(\epsilon,\epsilon)$. For the solution $u$, define: 
$$ E\left(u; (\epsilon,\epsilon) \right):=E(u;\epsilon):= \frac{u^*(2\epsilon,0)+u^*(0,2\epsilon)}{\epsilon}=\frac{u(2\epsilon,0)+u(0,2\epsilon)-2u(\epsilon,\epsilon)}{\epsilon}.$$
\end{definition}

First of all, we have the following theorem:
	
\begin{theorem}
\label{thm1V}
There exists a universal positive constant $\tilde{E}=\tilde{E}(a,A)$ such that 
$$ E(u;\epsilon) \leq \tilde{E}, $$
for any small positive $\epsilon$.
\end{theorem}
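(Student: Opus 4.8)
The plan is to mirror the contradiction argument used for Theorem~\ref{thm1}, adapted to the corner geometry. Suppose no such universal $\tilde E$ exists; then there is a sequence of solutions $u_n$ to \eqref{eq1V}, with right-hand sides $H_n$ satisfying $0<a\le H_n\le A$, and points $(\epsilon_n,\epsilon_n)$ with $E(u_n;\epsilon_n)\to+\infty$. The first ingredient we need is the analogue of Lemma~\ref{zero-D}, namely that $\epsilon^{1-\gamma}E(u;\epsilon)\to 0$ as $\epsilon\to0$ for $\gamma<\theta$, where $\theta$ is the H\"older exponent of the boundary $C^{0,\theta}$ bound on $u$ (which follows from Rubin's existence theorem, since $H_n$ is uniformly bounded above and below). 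This forces a choice of a ``worst point'' along the diagonal, exactly as in \eqref{eq1.5}--\eqref{worst point}, with a weight $\Lambda(p)^\theta$ measuring distance away from the vertex, so that after rescaling $E$ is almost maximal and we gain the near-maximality inequality \eqref{eq2}.

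Next I would perform the rescaling. Following Donaldson's choice (Remark~\ref{lambda_0}) the natural scaling at a corner is $\tilde u_n(x_1,x_2):=\dfrac{1}{\epsilon_n E_n}\,u_n^*(\sqrt{\epsilon_n E_n}\,x_1,\sqrt{\epsilon_n E_n}\,x_2)$ (or a product-form rescaling matching the two edges, $x_i\mapsto \lambda_{0,i}x_i$, using $u_{11}(t,0)\sim 1/t$ and $u_{22}(0,s)\sim 1/s$); one checks the rescaled equation has right-hand side $\dfrac{H_n(\cdots)}{E_n\,x_1x_2}$, which tends to $0$ on compact subsets of the open quadrant as $E_n\to\infty$. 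The rescaled solution is normalized at $(1,1)$, with $\tilde u_n(1,1)=0$ and $\nabla\tilde u_n(1,1)=0$. I then establish uniform local bounds on $\tilde u_n$ over compact subsets of the open quadrant $\mathbb{Q}=\{x_1,x_2>0\}$: the upper bound along each axis comes from the differential inequality encoded in $E\le k$ (close to $1$), giving bounds of the form $\tilde u_n(x_1,0)\le k\,x_1\log x_1 + \text{(linear)}$ and likewise in $x_2$ near the two boundary rays, together with the explicit lower bounds $\tilde u_n(x_1,0)\ge x_1\log x_1 - x_1+1$ etc.\ coming from the Guillemin boundary condition and \eqref{restriction-x_1-x_2}; convexity then propagates these to a uniform interior bound, and the domain exhausts $\mathbb{Q}$ because of the analogue of Lemma~\ref{zero-D}. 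By Arzel\`a--Ascoli (convexity $\Rightarrow$ local Lipschitz) a subsequence converges locally uniformly to a convex $u_\infty$ on $\mathbb{Q}$.

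The final step is to derive a contradiction from the structure of $u_\infty$. Since the rescaled equations have right-hand side tending to $0$, an Alexandrov maximum principle argument (as in STEP III of Section~\ref{section--2}) shows $u_\infty$ is nowhere strictly convex, so the zero set $Z=\{u_\infty=0\}$, a convex set containing $(1,1)$ and on which $u_\infty$ attains its minimum, must contain a full line through $(1,1)$. The lower bounds $\tilde u_n(x_1,0)\ge x_1\log x_1-x_1+1>0$ and $\tilde u_n(0,x_2)\ge x_2\log x_2-x_2+1>0$ prevent $Z$ from containing either coordinate direction, so $Z$ contains a line $x_2=\eta x_1 + (1-\eta)$ with $\eta\ne0,\infty$; by symmetry of the two axes we may take this to be a segment meeting both boundary rays, say $x_1+x_2=2$, with endpoints $(2,0)$ and $(0,2)$ (or scalings thereof). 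Then $\tilde u_n(x_1,0)\to$ an affine function on its segment, forcing $\tilde u_n(2,0)\to 0$, but integrating $\tilde u_{n,11}(t,0)=\dfrac{H_n(\cdots)}{t}$ twice (cf.\ the quadratic-versus-linear contradiction \eqref{quadratic-bound} vs.\ \eqref{linear-bound}) shows $\tilde u_n(x_1,0)$ converges to a strictly convex limit of the form $c_0 + c_1 x_1 + (\text{positive})\cdot x_1\log x_1$-type growth that cannot be affine, a contradiction. The main obstacle I anticipate is the bookkeeping at the corner: carefully choosing the rescaling so that the boundary behavior along \emph{both} edges is captured simultaneously, and ruling out that the limiting line $Z$ degenerates into one of the coordinate rays — this is where the quantitative lower barriers of Huang's type (Lemma~\ref{Barrier-modified-Huang}, giving Lipschitz control at the vertex) are essential, since they are what give the strict positivity of $\tilde u_n$ on the two boundary rays away from the vertex.
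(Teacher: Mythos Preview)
Your approach is genuinely different from the paper's, but it contains a real gap. The paper does \emph{not} run a Donaldson-style blow-up at the vertex for Theorem~\ref{thm1V}; it simply observes that once the barrier estimate of Lemma~\ref{Barrier-modified-Huang} gives $|v(x_1,x_2)-v(x_1,0)|\le Cx_2$ and $|v(x_1,x_2)-v(0,x_2)|\le Cx_1$ for $v=u-x_1\log x_1-x_2\log x_2$, then
\[
\epsilon\,E(u;\epsilon)=v(2\epsilon,0)+v(0,2\epsilon)-2v(\epsilon,\epsilon)+4\epsilon\log 2
\]
is $O(\epsilon)$, so $E$ is bounded. In other words the lemma you only invoke at the very end is, by itself, the full proof.

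The reason your rescaling argument cannot close is the following arithmetic. To have the normalization point $(\epsilon_n,\epsilon_n)$ land at $(1,1)$ you are forced to rescale space by $\epsilon_n$ (your factor $\sqrt{\epsilon_n E_n}$ sends it to $(\sqrt{\epsilon_n/E_n},\sqrt{\epsilon_n/E_n})\to 0$, so the stated normalization is lost). With the correct choice $\tilde u_n(x)=\mu^{-1}u_n^*(\epsilon_n x)$ one computes
\[
\det D^2\tilde u_n=\frac{\epsilon_n^2}{\mu^2}\cdot\frac{H_n}{x_1x_2},
\qquad
\tilde u_{n,11}(t,0)=\frac{\epsilon_n^2}{\mu}\,u_{11}(\epsilon_n t,0)=\frac{\epsilon_n}{\mu}\cdot\frac{H_n(\epsilon_n t,0)}{t}.
\]
Thus requiring $\epsilon_n^2/\mu^2\to 0$ (so the limit is nowhere strictly convex) forces $\epsilon_n/\mu\to 0$, and then the boundary second derivative $\tilde u_{n,11}(t,0)\to 0$ as well: the limit on the $x_1$-axis \emph{is} affine, and your ``quadratic versus linear'' contradiction evaporates. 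This is precisely where the vertex differs from the edge: at an edge $u_{11}(t,0)\sim 1$ survives the rescaling, whereas at the vertex $u_{11}(t,0)\sim 1/t$ carries the same homogeneity as the interior right-hand side and is killed simultaneously. Your claim that $\tilde u_{n,11}(t,0)=H_n(\cdots)/t$ omits the factor $\epsilon_n/\mu$, and that omission is exactly the gap. (In the paper, the blow-up argument is used only \emph{after} Theorem~\ref{thm1V}, for the anisotropic quantity $D_1$; there the needed lower bound on the boundary convexity is supplied by Lemma~\ref{lemmascale}, which in turn rests on Theorem~\ref{thm1V}.)
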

	
This theorem is a simple corollary of the following lemma.
	
\begin{lemma}
\label{Barrier-modified-Huang}
Suppose that \( H \in C^{0,\alpha}(\overline{Q}) \) and \( v \) is a solution of the Dirichlet problem \eqref{v-equation-1}. Then, the following estimates hold:  
\[
|v(x_1,x_2)-v(x_1,0)| \leq Cx_2,
\]
\[
|v(x_1,x_2)-v(0,x_2)| \leq Cx_1,
\]
where the constant \( C \) depends on \( \|H\|_{C^{0,\alpha}(\overline{Q})} \), \( \|v\|_{C^0(\overline{Q})} \), and \( \|Dv\|_{C^0\left(\overline{Q\setminus Q_{\delta}}\right)} \), while \( \delta \) depends on \( a \), \( \alpha \), and \( \|H\|_{C^{0,\alpha}(\overline{Q})} \).
\end{lemma}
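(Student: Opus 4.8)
\textbf{Proof plan for Lemma \ref{Barrier-modified-Huang}.} The plan is to construct explicit barrier functions that dominate $v$ from above and below near the edge $\{x_2=0\}$ (the other estimate follows by the symmetry $x_1 \leftrightarrow x_2$), thereby forcing the Lipschitz bound $|v(x_1,x_2)-v(x_1,0)|\le Cx_2$. Following Huang \cite{huang2023guillemin}, I would work on a small half-disc or rectangle $\{(x_1,x_2): |x_1-x_1^0|<r,\ 0<x_2<r\}$ sitting away from the corner $\mathbf 0$, where $v$ is already known to be smooth up to $\{x_2=0\}$ and where $v_{11}(x_1,0) = (H(x_1,0)-1)/x_1$ is a bounded Hölder function. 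The point is that we only want an estimate controlling the \emph{normal} variation $v(x_1,x_2)-v(x_1,0)$, so the natural comparison object is the linearization of the Monge--Ampère operator in \eqref{v-equation-1} around the known boundary data: treating $x_1 v_{11}+1$ as a positive, bounded factor, the equation says roughly $(x_2 v_{22}+1) = H/(x_1v_{11}+1) + x_1x_2v_{12}^2/(x_1v_{11}+1)$, and one wants to absorb the $v_{12}^2$ term or bound it via the a priori gradient control on $\overline{Q_1\setminus Q_\delta}$.

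\textbf{Key steps.} First, fix $\delta$ small enough (depending on $a,\alpha,\|H\|_{C^{0,\alpha}}$) so that on $Q_1\setminus Q_\delta$ the solution $v$ is smooth up to the two edges with $C^1$ norm controlled, as guaranteed by Rubin's construction; this isolates the genuinely singular behaviour to a neighbourhood of $\mathbf 0$, which is handled separately along the diagonal. Second, on a boundary rectangle $\mathcal R = (x_1^0-r, x_1^0+r)\times(0,r)$ with $x_1^0$ bounded below, introduce the upper barrier
\[
w^+(x_1,x_2) = v(x_1,0) + v_2^{\mathrm{bd}}(x_1)\,x_2 + K x_2 - M\big((x_1-x_1^0)^2 - \tfrac{1}{\beta}x_2^{\beta}\big),
\]
for a suitably chosen exponent $\beta\in(1,2)$ (here is where the mere Hölder regularity of $H$ forces $\beta<2$ rather than a clean quadratic, exactly as in the Example with $H(t)=\sqrt t$), and large constants $K,M$; symmetrically a lower barrier $w^-$. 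One then checks that $\det D^2 w^{\pm}$ (computed through the $v$-equation form) lies above/below $H$ on $\mathcal R$, that $w^+ \ge v \ge w^-$ on $\partial\mathcal R$ using the boundary data and the $C^1$ bound from Step one, and invokes the comparison principle for the Monge--Ampère operator to conclude $w^- \le v \le w^+$ on $\mathcal R$. Third, evaluating the barriers at $(x_1,x_2)$ versus $(x_1,0)$ and using $x_2^\beta \le x_2$ for $x_2\le 1$ yields $|v(x_1,x_2)-v(x_1,0)| \le C x_2$ with $C$ depending only on the advertised quantities. Fourth, patch this boundary estimate with the diagonal/corner estimate (the subject of Theorem \ref{thm1V}) to cover all of $\overline{Q_1}$.

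\textbf{Main obstacle.} The delicate point is the choice of barrier: because $H$ is only $C^{0,\alpha}$, one cannot differentiate the equation, and the quadratic term $M(x_1-x_1^0)^2$ in the $x_1$-direction interacts with the singular coefficient $x_1v_{11}+1$ and with the cross term $x_1x_2v_{12}^2$ in \eqref{v-equation-1}. The hard part will be verifying that $w^{\pm}$ are genuine sub/super-solutions of the fully nonlinear equation \eqref{v-equation-1} — i.e., controlling the mixed-derivative term $x_1 x_2 v_{12}^2$ in $\mathcal R$ — since a priori $v_{12}$ is not controlled up to $\{x_2=0\}$. Huang's original argument sidesteps this using $H\in C^1$; the modification I expect the authors to make is to trade one power of $x_2$ (replacing $x_2^2$ by $x_2^\beta$ with $\beta<2$) so that the extra negative term $-Mx_2^\beta/\beta$ in $w^+$ produces, upon computing the operator, a term of order $x_2^{\beta-2}$ that blows up and dominates any fixed power of the (still finite, for $x_2>0$) quantity $x_1x_2v_{12}^2$ near the edge, closing the sub/super-solution inequality in a neighbourhood of $\{x_2=0\}$ at the cost of only a Lipschitz — not $C^{1,1}$ — conclusion, which is exactly what the lemma claims.
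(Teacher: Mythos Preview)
Your proposal has the right intuition --- replace a quadratic correction in the barrier by a fractional power $x_2^\beta$ (the paper takes $\beta=1+\alpha$) so that only the H\"older seminorm of $H$ enters --- but the implementation differs from the paper's in a way that creates two genuine gaps.

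First, a logical circularity: you plan to establish the Lipschitz bound only on rectangles with $x_1^0$ bounded away from $0$, and then ``patch'' near the corner using Theorem~\ref{thm1V}. In the paper's architecture this is backwards: Theorem~\ref{thm1V} is stated as a simple corollary of Lemma~\ref{Barrier-modified-Huang}, not an independent input. The lemma must deliver the estimate uniformly on all of $Q_{r}$, including the corner, before the bound on $E$ is available.

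Second, your identified ``main obstacle'' --- the cross term $x_1x_2v_{12}^2$ in \eqref{v-equation-1} --- is real for your setup but you do not actually resolve it. Saying that the $x_2^{\beta-2}$ blowup ``dominates any fixed power of the (still finite, for $x_2>0$) quantity $x_1x_2v_{12}^2$'' is not an argument: $v_{12}$ has no a~priori bound as $x_2\to 0$, so there is nothing fixed to dominate.

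The paper sidesteps both issues simultaneously by building the barrier for $u$ rather than for $v$. Writing
\[
U_+(x_1,x_2)=A_+x_2-\frac{B_+}{\alpha(1+\alpha)}x_2^{1+\alpha}+x_1\log x_1+x_2\log x_2+v(x_1,0),
\]
one has $U_{+,12}\equiv 0$, so $\det D^2U_+$ is simply the product $\dfrac{H(x_1,0)}{x_1}\Bigl(\dfrac{1}{x_2}-B_+x_2^{\alpha-1}\Bigr)$, with no mixed term to control. The comparison $\det D^2u-\det D^2U_+$ then reduces exactly to $\dfrac{x_2^{\alpha-1}}{x_1}\Bigl(B_+H(x_1,0)+\dfrac{H(x_1,x_2)-H(x_1,0)}{x_2^{\alpha}}\Bigr)$, and the H\"older seminorm of $H$ bounds the difference quotient. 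This barrier is convex and works on the full rectangle $Q_{r_+}$, right up to the vertex; the gradient bound $\|Dv\|_{C^0(\overline{Q_1\setminus Q_\delta})}$ is used only to check $U_+\ge u$ on the two far sides $\{x_1=r_+\}$ and $\{x_2=r_+\}$, which lie away from the corner. No patching, no $v_{12}$ control, and no appeal to Theorem~\ref{thm1V} is needed.
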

	
\begin{proof}
We follow the proof of Huang \cite[Lemma 2.4]{huang2023guillemin}, with a slight modification of the barrier functions. The proof is very similar to the one in \cite[Lemma 2.4]{huang2023guillemin}. For the reader convenience, we provide the details.
Let
$$
U_+(x_1,x_2) = A_+ x_2 - \frac{B_+}{\alpha(1+\alpha)} x_2^{1+\alpha} + x_1 \log x_1 + x_2 \log x_2 + v(x_1,0),
$$
where the two positive constants $A_+$ and $B_+$ will be determined later. In the following, we select appropriate values for $r_+$, $A_+$, and $B_+$ to ensure that $U_+(x_1,x_2)$ serves as an upper barrier in $Q_{r_+}$.
		
Then, in $Q_{r_+}$, we have:
$$
\det D^2 U_+ =
\left(v_{x_1 x_1} (x_1,0) + \dfrac{1}{x_1}
\right) \left( \dfrac{1}{x_2} - B_+x_2^{\alpha-1} \right)
= \frac{H(x_1,0)}{x_1} \left( \frac{1}{x_2} - B_+x_2^{\alpha-1} \right).
$$
Then, for \( u = v + x_1 \log x_1 + x_2 \log x_2 \), we obtain:
\begin{align} \nonumber
\det D^2 u - \det D^2 U_+ 
& = \frac{x_2^{\alpha-1}}{x_1} \left( B_+ H(x_1,0) + \frac{H(x_1,x_2) - H(x_1,0)}{x_2^{\alpha}} \right) \\ \label{diff-bar}
& 
\geq \frac{x_2^{\alpha-1}}{x_1} \left( a B_+  - \|H\|_{C^{0,\alpha}(\overline{Q})} \right) > 0, \qquad \text{in }Q_{r_+}.
\end{align}
In deriving the last inequality in \eqref{diff-bar}, we fix: 
\[ B_+ = \frac{2}{a} \|H\|_{C^{0,\alpha}(\overline{Q})} \qquad \text{and} \qquad r_+ = \max \left\{ \left( \frac{a}{4 \left(\|H\|_{C^{0,\alpha}(\overline{Q})} + 1 \right)} \right)^{\frac{1}{\alpha}}, \left( \frac{\alpha(\alpha+1)}{2} \right)^{\frac{1}{\alpha}} \right\}. \]
With these choices, we have:
$$
U_{+,11}(x_1,x_2) = \frac{H(x_1,0)}{x_1}, \qquad U_{+,22}(x_1,x_2) = \frac{1}{x_2} - B_+x_2^{\alpha-1} > 0, \qquad U_{+,12}(x_1,x_2) = 0,
$$
which also implies that \( U_+ \) is convex in \( Q_{r_+} \).
		
Next, we compare the boundary values of \( u \) and \( U_+ \). Let \( A_+ \geq B_+ r_+ \). Then, we have:
$$
A_+ x_2 - \frac{B_+}{\alpha(1+\alpha)} x_2^{1+\alpha} \geq \frac{A_+}{2} x_2 \qquad 0 \leq x_2 \leq r_+.
$$
\begin{itemize}
\item On \( 0 \leq x_1 \leq r_+ \), with \( x_2 = 0 \), we have \( u = U_+ \).
			
\item On \( x_1 = 0 \), with \( 0 \leq x_2 \leq r_+ \), we have:
$$
U_+ - u \geq \frac{A_+}{2} x_2 - v(0, x_2) > 0
$$
for \( A_+ \geq 2 \max_{0 \leq x_2 \leq 1} |v_{x_2}(0,x_2)| \).
			
\item On \( x_1 = r_+ \), with \( 0 \leq x_2 \leq r_+ \), we have:
$$
U_+ - u \geq \frac{A_+}{2} x_2 - \left(v(r_+,x_2) - v(r_+,0) \right) > 0
$$
for \( A_+ \geq 2 \max_{0 \leq x_2 \leq 1} |v_{x_2}(r_+,x_2)| \).
			
\item On \( 0 \leq x_1 \leq r_+ \), with \( x_2 = r_+ \), we have
$$
U_+ - u \geq \frac{A_+}{2} r_+ + v(x_1,0) - v(x_1,r_+) > 0
$$
for
$$
A_+ \geq 4 \frac{\max_{Q} |v(x_1,x_2)|}{r_+}.
$$
\end{itemize}
The argument above shows that \( U_+(x_1,x_2) \) is an upper barrier for \( u \) on \( \partial Q_{r_+} \), provided \( A_+ \) is sufficiently large. From the standard maximum principle, it follows that:
$$
u(x_1,x_2) \leq U_+(x_1,x_2), \qquad \forall (x_1,x_2) \in Q_{r_+}.
$$
A similar argument also gives:
$$
u(x_1,x_2) \geq U_-(x_1,x_2), \qquad \forall (x_1,x_2) \in Q_{r_-}
$$
where
$$
U_-(x_1,x_2) = -A_- x_2 + \frac{B_-}{\alpha(1+\alpha)} x_2^{1+\alpha} + x_1 \log x_1 + x_2 \log x_2 + v(x_1,0).
$$
This proves that \( |v(x_1,x_2) - v(x_1,0)| \leq C x_2 \). By choosing similar barrier functions
$$
\hat{U}_\pm(x_1,x_2) = \pm \hat{A}_\pm x_1 \mp \frac{\hat{B}_\pm}{\alpha(1+\alpha)} x_1^{1+\alpha} + x_1 \log x_1 + x_2 \log x_2 + v(0,x_2)
$$
one also obtains the second inequality of the present lemma.
\end{proof}

Now, we proceed similarly to the proof of Proposition~\ref{proposition-edges} as follows:  
	
Suppose that \( u \) is normalized at the point \( (\epsilon, \epsilon) \), and consider the following scaling:  
\[
\tilde{u}(x_1, x_2) = \frac{1}{\epsilon} u^*\left(\epsilon x_1, \epsilon x_2\right).
\]
Then, the function \( \tilde{u} \) is defined on a ball of radius \( \frac{1}{2} \) centered at \( (0,1) \). It is important to note that \( \tilde{u} \) is uniformly bounded by Theorem~\ref{thm1V} and has a bounded modulus of strict convexity. This allows us to apply Caffarelli’s regularity result (\cite{zbMATH04154994}). However, although this argument effectively ensures regularity along the diagonal near the vertices, our proof of Proposition~\ref{proposition-vertices} essentially covers this case as well. The proof of this proposition will be presented in the next subsection.

\subsection{Regularity away from the diagonal near the vertices}
	
In the preceding subsection, we proved the regularity of solutions along the diagonal near the vertices. Near the vertex point \((0, 0)\), for a point $p=(p_1,p_2)$, we normalize our solutions at the point $(p_1,p_2)$ as follows:
$$ 
u^*(x_{1},x_{2}) = u(x_{1},x_{2}) -u(p_1,p_2) - u_{1}(p_1,p_2)(x_{1} - p_1) - u_{2}(p_1,p_2)(x_{2} - p_2).
$$
Recalling Definition \ref{DDD-12}, we also need to define the following constants.
\begin{definition}[Donaldson's notion of $D_i$ of $u$ at $(p_1,p_2)$] 
Let $(p_1,p_2)$ be close to the origin and assume that $u^*$ is normalized at $p=(p_1,p_2)$ for the solution $u$. We define: 
$$ D_{1}\left(u; p \right):= \frac{u^*(p_1,0)}{p_2}=\frac{u(p_1,0)-u(p_1,p_2)+p_2 u_{2}(p_1,p_2)}{p_2}.$$
Similarly, we can define $D_2\left(u; p \right)$. 
\end{definition}
Analogous to the result of Theorem~\ref{thm1}, we establish the following theorem:
	
\begin{theorem}
\label{thm1V2}
There exists a universal positive constant $\tilde{D}=\tilde{D}(a,A)$ such that 
$$ D_1(u;(p_1,p_2)) \leq \tilde{D}, $$
for any point $ (p_1,p_2).$
\end{theorem}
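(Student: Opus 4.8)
The plan is to run the same contradiction-and-rescaling scheme used for Theorem \ref{thm1}, but now anchored at the vertex $(0,0)$ and tailored to the regime $p_1 \gg p_2$. First I would set up the analogue of Lemma \ref{zero-D} near the vertex: using the Lipschitz estimate $|v(x_1,x_2)-v(0,x_2)|\le Cx_1$ and $|v(x_1,x_2)-v(x_1,0)|\le Cx_2$ from Lemma \ref{Barrier-modified-Huang}, together with the convexity of $u$ along vertical slices and the fact that $u_2(p_1,p_2)$ is controlled by the restriction ODE on the edge $\{x_2=0\}$, I would show that $D_1(u;(p_1,p_2))$ cannot blow up faster than a fixed negative power of $p_2$ as $p_2\to 0$; this is the compactness input that makes the "worst point" argument work. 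With that in hand, assume for contradiction that $D_1$ is not uniformly bounded, pick a near-maximizing point $p_0=(p_1^0,p_2^0)$ for $\Lambda(p)^\theta D_1(u;p)$ on the relevant region (with $\Lambda$ now measuring distance to the diagonal ray $\{x_1=x_2\}$, past which Theorem \ref{thm1V} already gives a bound, and to the far edge), and rescale
$$
\tilde u(x_1,x_2):=\frac{1}{p_2^0 D_1(u;p_0)}\,u^*\!\left(p_1^0+\sqrt{p_2^0 D_1(u;p_0)}\,x_1,\; p_2^0 x_2\right),
$$
exactly mirroring Definition \ref{D-constant-def}. The rescaled PDE becomes $\det D^2\tilde u = H(\cdots)/(D_1(u;p_0)\,\hat x_1 x_2)$ where $\hat x_1 = p_1^0 + \lambda_0 x_1$; the key point is that because we are in the regime $p_1^0 \gg p_2^0$ the factor $\hat x_1$ stays comparable to $p_1^0$ on the rescaled domain, so after absorbing the constant $p_1^0$ the right-hand side again tends to $0$, and the limit $u_\infty$ is a nowhere strictly convex convex function on a half-plane, affine along the boundary line $\{x_2=0\}$.

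Next I would extract the contradiction just as in Step III of the proof of Theorem \ref{thm1}. The zero set $Z=\{u_\infty=0\}$ is convex, contains the normalization point, has no extreme points (nowhere strict convexity), and cannot contain the vertical line $\{x_1=0\}$ because the Guillemin lower bound $g(x_2)\ge x_2\log x_2 - x_2 + 1$ survives the rescaling; hence $Z$ contains a line $\Gamma_\eta=\{x_2=\eta x_1+1\}$ with, after a separate normalization of the $x_1$-derivative at two points, $\eta\ne 0$. But then $\tilde u_n(x_1,0)$ converges along a suitable slice to an affine function of $x_1$, while the restriction $u_{11}(x_1,0)=H(x_1,0)/x_1$ forces $\tilde u_n(x_1,0)$ to have a nontrivial second-order term in $x_1$ — here one must be a little careful because the coefficient is $H/x_1$ rather than $H$, but in the regime $p_1^0\gg p_2^0$ and on the rescaled window the coefficient $H(\hat x_1,0)/\hat x_1$ is pinched between $a/(Cp_1^0)$ and $A/(cp_1^0)$, a fixed positive constant after the $p_1^0$-rescaling, so the quadratic term does not degenerate and one gets the same clash between a linear bound and a genuinely quadratic limit as in \eqref{linear-bound}--\eqref{quadratic-bound}.

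The main obstacle, and the place where this proof genuinely differs from Theorem \ref{thm1}, is controlling the extra singular factor $1/x_1$ uniformly throughout the rescaling: one has to choose the region over which $\Lambda(p)^\theta D_1(u;p)$ is maximized so that the worst point $p_0$ automatically satisfies $p_1^0 \gtrsim p_2^0$ (points with $p_1^0 \sim p_2^0$ being already handled by Theorem \ref{thm1V} via $D_1 \lesssim E$), and then verify that on the rescaled rectangle $\hat x_1/p_1^0$ stays bounded above and below, so that the $1/x_1$ singularity is harmless. A secondary technical point is the edge behavior of $u_2(p_1,p_2)$ as $p_2\to 0$ with $p_1$ fixed away from the vertex: this is governed by \eqref{restriction-x_1-x_2} and the one-dimensional expansion \eqref{l-r-en-1}, which gives the $O(p_2\log p_2)$ control needed for the near-vertex analogue of Lemma \ref{zero-D}. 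Once these two points are in place, the rest of the argument is a faithful transcription of Step I--Step III from Section \ref{section--2}. Finally, combining Theorem \ref{thm1V} (diagonal regime), Theorem \ref{thm1V2} (and its $D_2$ counterpart), the Lipschitz boundary estimate of Lemma \ref{Barrier-modified-Huang}, and Caffarelli's regularity theory applied to the rescaled solutions exactly as in the proof of Proposition \ref{proposition-edges}, yields Proposition \ref{proposition-vertices}.
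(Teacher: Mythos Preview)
Your proposal carries over the edge-case rescaling too literally, and this is where it breaks. With your choice $\tilde u(x_1,x_2)=\frac{1}{p_2^0 D_1}\,u^*(p_1^0+\lambda_0 x_1,\,p_2^0 x_2)$, $\lambda_0=\sqrt{p_2^0 D_1}$, the second tangential derivative on the boundary is $\tilde u_{11}(x_1,0)=u_{11}(\hat x_1,0)=H(\hat x_1,0)/\hat x_1$. Your claim that $\hat x_1=p_1^0+\lambda_0 x_1$ stays comparable to $p_1^0$ on a fixed window requires $\lambda_0\lesssim p_1^0$, i.e.\ $p_2^0 D_1\lesssim (p_1^0)^2$; you have no input that gives this, and in fact the sharp estimate available (the paper's Lemma~\ref{lemmascale}) only yields $p_2^0 D_1\le C p_1^0$, so that $\lambda_0/p_1^0\sim (p_1^0)^{-1/2}\to\infty$ near the vertex. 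Even granting $\hat x_1\sim p_1^0$, you would get $\tilde u_{11}(x_1,0)\sim 1/p_1^0\to\infty$, so the analogue of Lemma~\ref{UPER-LOWER-bound-2} fails and the local uniform boundedness of Step~II collapses. The phrase ``a fixed positive constant after the $p_1^0$-rescaling'' does not rescue this, since no $p_1^0$-rescaling appears in your definition of $\tilde u$.

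The paper's remedy is to change the $x_1$-rescaling from additive to \emph{multiplicative}: $\tilde u(x_1,x_2)=\frac{1}{\epsilon_0 D_1}\,u^*(p_1 x_1,\,\epsilon_0 x_2)$, so that the domain opens up to the full quarter-plane and the $1/x_1$ singularity is absorbed into the model rather than frozen at scale $p_1^0$. The rescaled boundary convexity then reads $\int_1^2 \tilde u_{11}(t,0)\,dt\sim \tfrac{p_1}{D_1\epsilon_0}\log 2$, and the decisive quantitative input is Lemma~\ref{lemmascale}, namely $\tfrac{\epsilon_0 D_1}{p_1}\le C$, obtained by first rescaling to the diagonal point $(p_1,p_1)$ and invoking the $E$-bound of Theorem~\ref{thm1V}. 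You invoke Theorem~\ref{thm1V} only to discard the regime $p_1\sim p_2$; the point you are missing is that it must also be used \emph{quantitatively} in the off-diagonal regime to tie together $p_1$, $p_2$, and $D_1$, and this relationship is precisely what forces a different rescaling from the edge case.
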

	
The proof is very similar to the proof of Theorem~\ref{thm1}. First, note that the result follows from Theorem~\ref{thm1V} if \( p_1 \sim p_2 \). Therefore, without loss of generality, we may assume that \( p_1 \gg p_2 \) and prove that \( D_1 = D_1 \left( u; (p_1, p_2) \right) \) is uniformly bounded. Suppose the maximum of \( D_1 \) occurs at \( p = (p_1, p_2) \). As in Section \ref{section--2}, we demonstrate that if \( D_1 \) is large, a sequence of solutions can be obtained that converges to a nowhere strictly convex function. Thus, to derive a contradiction with the affine linearity of the limit on the \( x_1 \)-axis, we must demonstrate that the scaled solutions, when restricted to the \( x_1 \)-axis, remain uniformly strictly convex.
	
\subsection{Proof of Theorem \ref{thm1V2}}

Let \( u \) be a solution for which \( D_1 \) is large near a vertex. We assume that the vertex is at the origin and that the edges coincide with the \( x_1 \)- and \( x_2 \)-axes. Suppose the maximum of \( D_1 \) occurs at \( p = (p_1, \epsilon_0) \), and \( u \) is normalized at \( p = (p_1, \epsilon_0) \). Let \( D_1 = D_1 \left(u; p \right) \), and rescale the function \( u \) as follows:  
$$  
\tilde{u}(x_1, x_2) := \frac{1}{\epsilon_0 D_1 \left(u; p \right)} u^* \left(p_1x_1, \epsilon_0 x_2 \right).  
$$  
One can easily verify that:  
$$  
\det D^2\tilde{u} (x_{1},x_{2}) = \frac{p_1^2\epsilon_0^2}{D_1^2\epsilon_0^2} \det D^2u (p_1x_{1},\epsilon_{0}x_{2}) = \frac{p_1}{D_1^2\epsilon_0} \frac{H(p_1x_{1},\epsilon_{0}x_{2})}{x_1x_2} \qquad \text{in }\left(0, \frac{1}{p_1} \right) \times \left(0, \frac{1}{\epsilon_0} \right).  
$$  
	
\begin{proof}[Proof of Theorem \ref{thm1V2}]
Suppose we have a sequence \( \{u_n\} \) and points \( (p_n, \epsilon_n) \) such that  
$$ D_1=D_1^{(n)}=D_1 \left(u_n; (p_n, \epsilon_n) \right) \to +\infty. $$  
We may assume that \( u_n \) is normalized at \( (p_n, \epsilon_n) \) and define \( \tilde{u}_n \) as follows:  
$$  
\tilde{u}_n(x_1, x_2) = \frac{1}{\epsilon_n D_1 \left(u_n; (p_n, \epsilon_n) \right)} u_n^* \left(p_n x_1, \epsilon_n x_2 \right).  
$$
Since the sequence \( \tilde{u}_n \) is locally uniformly bounded, it converges locally uniformly to a convex function \( u_{\infty} \) on \( \{(x_1,x_2) \, : \, x_1>0, \, x_2>0\} \).  
		
On the other hand, we have  
$$  
\det D^2\tilde{u}_n (x_1, x_2) \sim \frac{p_n}{D_1^2\epsilon_n} \frac{1}{x_1 x_2} \to 0,  
$$  
on any compact subset of \( \{(x_1,x_2) \, : \, x_1>0, \, x_2>0\} \). Therefore, \( u_{\infty} \) is nowhere strictly convex. This implies that the set \( \{u_{\infty} = 0\} \) contains a line, a half-plane, or a line segment \( l \) that intersects the boundary of \( \{(x_1,x_2) \, : \, x_1 > 0, \, x_2 > 0\} \). It is easy to check that this must be a half-plane intersecting the \( x_1 \)-axis. The same argument as in Section \ref{section--2} shows that the restriction of \( u_{\infty} \) to the \( x_1 \)-axis is affine linear.  
		
On the other hand, for any \( \tilde{u} = \tilde{u}_n \), we have:
\begin{align*}
\tilde{u}_1(2,0) - \tilde{u}_1(1,0) = \int_1^2 \tilde{u}_{11}(t,0) \, dt = \frac{p_1^2}{D_1 \epsilon_0} \int_1^2 u_{11}(p_1 t, 0) \, dt & = \frac{p_1^2}{D_1 \epsilon_0} \int_1^2 \frac{H(p_1 t, 0)}{p_1 t} \, dt \\
&\sim \frac{p_1}{D_1 \epsilon_0} \int_{p_1}^{2 p_1} \frac{1}{t} \, dt \\
&= \frac{p_1}{D_1 \epsilon_0} \log 2.
\end{align*}  
This implies that \( \tilde{u}_n \) is strictly convex on the \( x_1 \)-axis uniformly in \( n \), provided there is a uniform upper bound on \( \frac{\epsilon_n D_1}{p_n} \). Therefore, such a bound would complete the proof. We prove this in Lemma~\ref{lemmascale}, which is a direct consequence of Theorem~\ref{thm1V}.
\end{proof}
	
\begin{lemma}
\label{lemmascale}
There exists a universal constant \( C > 0 \) such that  
\[
\frac{p_2 D_1}{p_1} \leq C.
\]
\end{lemma}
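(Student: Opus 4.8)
The goal is to bound $\frac{p_2 D_1}{p_1}$ uniformly, where $D_1 = D_1(u;(p_1,p_2))$ and we are in the regime $p_1 \gg p_2$ near the vertex $(0,0)$. The natural strategy is to compare $D_1$, which measures the normalized gap in the $x_2$-direction at the point $(p_1,p_2)$, with the quantity $E(u;\epsilon)$ controlled by Theorem~\ref{thm1V}, by choosing $\epsilon \sim p_1$ so that the normalization point $(\epsilon,\epsilon)$ sits on the diagonal directly ``above'' $(p_1,p_2)$. The key geometric fact is that for a convex function, the second difference quotient in the $x_2$-direction is monotone, so the vertical gap of $u$ at horizontal coordinate $p_1$, measured over the interval $[0,p_2]$, is controlled from above by the vertical gap measured over the larger interval $[0,p_1]$ (up to an affine correction), and the latter is exactly the kind of quantity appearing in $E(u;p_1)$.

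\medskip

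\noindent\textbf{Key steps.} First, fix $\epsilon = p_1$ and let $u^\sharp$ denote the normalization of $u$ at $(p_1,p_1)$; by Theorem~\ref{thm1V}, $E(u;p_1) = \frac{u^\sharp(2p_1,0)+u^\sharp(0,2p_1)}{p_1} \leq \tilde E$, and since $u^\sharp \geq 0$ with minimum $0$ at $(p_1,p_1)$, each summand is separately nonnegative and bounded by $\tilde E p_1$. In particular $u^\sharp(p_1,0) \leq u^\sharp(0,0) + (\text{affine}) \lesssim \tilde E p_1$ after using convexity along the vertical segment $\{x_1 = p_1\}$ — more precisely, $u^\sharp$ restricted to $\{x_1 = p_1\}$ is a nonnegative convex function vanishing at $x_2 = p_1$, so its value at $x_2 = 0$ is bounded by its value at $x_2 = 2p_1$, which is $\leq E$-controlled. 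Second, relate $D_1(u;(p_1,p_2))$ to $u^\sharp(p_1,0)$: both are computed from the same function $u$ but with normalizations at different points. Write $g(t) := u(p_1,t)$; then $p_2 D_1 = g(0) - g(p_2) + p_2 g'(p_2)$ and $2p_1 \cdot (\text{the analogous quantity at height }p_1) \gtrsim$ the $E$-controlled term. Since $g$ is convex, the function $t \mapsto \frac{g(0) - g(t) + t g'(t)}{t}$ is \emph{monotone increasing} in $t$ (this is the standard monotonicity of $D$ in the normalization height, already implicit in the use of $\eqref{eq2}$ and Lemma~\ref{zero-D}); hence evaluating at $t = p_2 \leq p_1$ gives $D_1(u;(p_1,p_2)) = \frac{g(0)-g(p_2)+p_2 g'(p_2)}{p_2} \leq \frac{g(0) - g(p_1) + p_1 g'(p_1)}{p_1}$. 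Third, bound the right-hand side: $g(0) - g(p_1) + p_1 g'(p_1) = u^\sharp(p_1, 0)$ where $u^\sharp$ is normalized at $(p_1,p_1)$ (the affine corrections in $x_1$ cancel on the vertical line $x_1 = p_1$), and by Step~1 this is $\lesssim \tilde E p_1$. Combining, $D_1 \lesssim \tilde E \frac{p_1}{p_2} \cdot \frac{p_2}{p_1}$... wait — we get $D_1 \lesssim \frac{\tilde E p_1}{p_1} = \tilde E$ directly? No: we get $D_1 \leq \frac{u^\sharp(p_1,0)}{p_1} \lesssim \tilde E$, hence $\frac{p_2 D_1}{p_1} \leq \frac{p_2}{p_1}\tilde E \leq \tilde E$ trivially since $p_2 \leq p_1$. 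The content is therefore the chain $D_1(u;(p_1,p_2)) \leq D_1(u;(p_1,p_1)) \cdot \frac{?}{?}$ — one must be careful: the monotonicity is of $D$ \emph{as a function of the height with fixed first coordinate}, and $D_1(u;(p_1,p_1))$ is controlled not by Theorem~\ref{thm1} (which needs distance to the vertical edges) but by Theorem~\ref{thm1V} via the identity linking $D_1$ at a diagonal point to $E$. So the precise statement of Step~2 is: $\frac{p_2 D_1(u;(p_1,p_2))}{p_1} \leq D_1(u;(p_1,p_1)) \leq E(u;p_1) \leq \tilde E$, where the first inequality is the convexity/monotonicity bound $\frac{u^\sharp(p_1,0)}{p_1} \geq \frac{p_2}{p_1}\cdot\frac{u^\sharp(p_1,0)}{p_2} \geq \frac{p_2}{p_1} D_1$ — here $u^\sharp$ is normalized at $(p_1,p_1)$ and one uses that the vertical gap at height $p_2$ is dominated by that at height $p_1$ up to the factor accounting for the different normalization slopes.

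\medskip

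\noindent\textbf{Main obstacle.} The delicate point is Step~2: correctly accounting for the fact that $D_1(u;(p_1,p_2))$ and $D_1(u;(p_1,p_1))$ use \emph{different} supporting hyperplanes (different $u_2$ values at the two normalization points), so one cannot simply invoke ``$u^\sharp$ is the same function.'' One must work with $g(t) = u(p_1,t)$ directly: convexity gives $g(0) \geq g(p_2) + p_2\, g'(p_2)$ and the quantity $p_2 D_1 = g(0) - g(p_2) - p_2(-g'(p_2)) = g(0) - g(p_2) + p_2 g'(p_2)$, then the monotonicity $\frac{g(0)-g(t)+tg'(t)}{t}\nearrow$ (which follows since its derivative in $t$ equals $\frac{g(0) - g(t) + tg'(t) + t^2 g''(t) - (g(0)-g(t)+tg'(t))}{t^2}\cdot$... i.e.\ $= \frac{t g''(t) \cdot t - 0}{t^2} \geq 0$ by a short computation using $\frac{d}{dt}[g(0)-g(t)+tg'(t)] = tg''(t)$) lets us pass from height $p_2$ to height $p_1$, and finally the identification of $g(0)-g(p_1)+p_1 g'(p_1)$ with $u^\sharp(p_1,0)$ for the diagonal normalization, together with $E(u;p_1) \leq \tilde E$ from Theorem~\ref{thm1V}, closes the argument. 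Everything else is bookkeeping; the only real input beyond one-variable convexity is Theorem~\ref{thm1V}.
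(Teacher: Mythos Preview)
Your reduction via the one-variable monotonicity of $h(t)=g(0)-g(t)+tg'(t)$ (with $g(t)=u(p_1,t)$, so $h'(t)=tg''(t)\ge 0$) is correct and gives
\[
\frac{p_2 D_1(u;(p_1,p_2))}{p_1}=\frac{h(p_2)}{p_1}\le \frac{h(p_1)}{p_1}=D_1\bigl(u;(p_1,p_1)\bigr)=\frac{u^\sharp(p_1,0)}{p_1}.
\]
This is exactly the content of the paper's identity $D_1(u;(p_1,p_2))=D_1(\tilde u;(1,\epsilon))$ together with the elementary bound $\epsilon D_1(\tilde u;(1,\epsilon))\le \tilde u(1,0)$. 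So the two approaches coincide up to this point.

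The gap is in your final step, the inequality $D_1(u;(p_1,p_1))\le E(u;p_1)$. Unpacking, this reads
\[
u^\sharp(p_1,0)\ \le\ u^\sharp(2p_1,0)+u^\sharp(0,2p_1),
\]
and you justify it by ``$u^\sharp$ restricted to $\{x_1=p_1\}$ is a nonnegative convex function vanishing at $x_2=p_1$, so its value at $x_2=0$ is bounded by its value at $x_2=2p_1$, which is $E$-controlled.'' Neither assertion holds from convexity alone. First, a nonnegative convex function of one variable with minimum at $p_1$ need \emph{not} satisfy $g(0)\le g(2p_1)$; there is no such symmetry. Second, $E$ controls $u^\sharp(2p_1,0)$ and $u^\sharp(0,2p_1)$, not $u^\sharp(p_1,2p_1)$, and $(p_1,0)$ does not lie in the convex hull of $(2p_1,0)$, $(0,2p_1)$, $(p_1,p_1)$. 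In fact the displayed inequality fails for general convex $u^\sharp$: take
\[
u^\sharp(x_1,x_2)=(x_1+x_2-2p_1)^2+\epsilon\bigl[(x_1-p_1)^2+(x_2-p_1)^2\bigr],\qquad 0<\epsilon<\tfrac13,
\]
which is strictly convex with minimum $0$ at $(p_1,p_1)$, yet $u^\sharp(p_1,0)=(1+\epsilon)p_1^2>4\epsilon p_1^2=u^\sharp(2p_1,0)+u^\sharp(0,2p_1)$. (Your side computation that $h(t)/t$ is monotone is also wrong for the same reason: its derivative is $(t^2g''(t)-h(t))/t^2$, not $g''(t)$; take $g(t)=-\sqrt t$.)

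What is actually needed to bound $u^\sharp(p_1,0)/p_1=\tilde u(1,0)$ is more than Theorem~\ref{thm1V} plus two-dimensional convexity: one has to use the structure of the equation on the edge. The paper obtains it by rescaling to $\tilde u$ and invoking the analogues of Lemmas~\ref{UPER-LOWER-bound} and~\ref{UPER-LOWER-bound-2}: the $E$-bound (Theorem~\ref{thm1V}) pins down $\tilde u$ at the two boundary points $(2,0)$ and $(0,2)$, while the boundary second-derivative identity $\tilde u_{11}(x_1,0)=H(p_1x_1,0)/x_1$ (the vertex analogue of Lemma~\ref{UPER-LOWER-bound-2}) propagates this control along the edge to give a uniform bound on $\tilde u(x_1,0)$ for $x_1$ in a fixed compact interval of $(0,\infty)$. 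That boundary-ODE input is precisely what your argument omits.
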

	
\begin{proof}
Let \( u^* \) be the normalization of \( u \) at \( (p_1, p_1) \), and define  
\[
\tilde{u}(x_1, x_2) = \frac{1}{p_1} u^*(p_1 x_1, p_1 x_2).
\]  
Since \( E\left(u; (p_1, p_1) \right) \leq \tilde{E} \), by Theorem~\ref{thm1V}, and using similar arguments as those in Lemmas \ref{UPER-LOWER-bound} and \ref{UPER-LOWER-bound-2}, one can easily show that \( \tilde{u} \) is uniformly bounded on any compact subset of \( \{x_1 > 0, x_2 > 0\} \).  

On the other hand,  
\[
D_1 \left(u; (p_1, p_2) \right) = D_1 \left(\tilde{u}; (1, \epsilon) \right),
\]  
where \( \epsilon = \frac{p_2}{p_1} \). Hence, the uniform boundedness of \( \tilde{u} \) on compact sets implies that \( x_2 D_1 \left( \tilde{u}; (x_1, x_2) \right) \leq C \) for any \( (x_1, x_2) \sim (1, 0) \).  
		
This leads to the conclusion that  
\[
\frac{p_2 D_1}{p_1} = \epsilon D_1 = \epsilon D_1 \left( \tilde{u}; (1, \epsilon) \right) \leq C.
		\]  
\end{proof}
	
Finally, we present the proof of the up to corner counterpart of Proposition~\ref{proposition-edges}. Before proceeding, we recall Remark \ref{lambda_0}. Similar to that remark, we choose scaling \( \lambda \) such that  
\begin{equation}
\label{R-R-12-10000}
\lambda \int_{p_1-\lambda}^{p_1+\lambda} u_{11}(t,0) \, dt = p_1 D_1
\end{equation}
where \( D_1 \) is the same as in the previous arguments and is known to be universally bounded due to Theorem~\ref{thm1V2}. Since \( u_{11}(t,0) \sim t^{-1} \) near the vertex, it implies that \( \lambda \sim \sqrt{p_1p_2} \). This will be established in the next lemma.
	
\begin{lemma}  
\label{LLL-LLL-10}  
There exists a universal positive constant \( c \) such that:  
\[
0< c^{-1} \leq \frac{\lambda}{\sqrt{p_1p_2}} \leq c.
\]  
\end{lemma}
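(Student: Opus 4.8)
The plan is to read $\lambda$ off the defining relation \eqref{R-R-12-10000} from two inputs: the two-sided bound $a/t\le u_{11}(t,0)\le A/t$, immediate from \eqref{restriction-x_1-x_2} and $a\le H\le A$; and the two-sided bound $D_1\asymp 1$ for $D_1:=D_1(u;(p_1,p_2))$. We work in the regime in which the lemma is used, $p_1\gg p_2$ (the case $p_1\asymp p_2$, where $\sqrt{p_1p_2}\asymp p_1$, is similar and simpler, $D_1\asymp 1$ then being supplied by Theorem \ref{thm1V} and the interior estimate Theorem \ref{somehow-standard}). We also note that the right-hand side of \eqref{R-R-12-10000} is $p_2D_1$, with $p_2$ --- the distance from $p$ to the edge $\{x_2=0\}$ along which $u_{11}$ is integrated --- playing the role of $s_0$ in \eqref{R-R-12-100}.

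With $D_1\asymp 1$ in hand, the remaining estimate is elementary. For $0<\lambda<p_1$ put $F(\lambda):=\lambda\int_{p_1-\lambda}^{p_1+\lambda}u_{11}(t,0)\,dt$. Since $u_{11}(\cdot,0)>0$, $F$ is continuous and strictly increasing on $(0,p_1)$ with $F(0^+)=0$ and $F(\lambda)\to+\infty$ as $\lambda\to p_1^-$, so $F(\lambda)=p_2D_1$ has a unique root $\lambda\in(0,p_1)$. Writing $s:=\lambda/p_1$ and $\phi(s):=s\log\frac{1+s}{1-s}$, the bounds on $u_{11}(\cdot,0)$ give $a\,p_1\phi(s)\le F(\lambda)=p_2D_1\le A\,p_1\phi(s)$, hence $\phi(s)\asymp p_2D_1/p_1\asymp p_2/p_1$. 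As $p_2/p_1$ lies below a universal threshold, $\phi(s)<\phi(1/2)$, so $s<1/2$; and on $(0,1/2]$ one has $\phi(s)\asymp s^2$, since $\phi(s)/s^2=s^{-1}\log\frac{1+s}{1-s}$ increases there from $2$ to $2\log 3$. Therefore $s^2\asymp p_2/p_1$, i.e. $\lambda=p_1s\asymp\sqrt{p_1p_2}$.

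It remains to show $D_1\asymp 1$ uniformly. The bound $D_1\le\tilde D$ is Theorem \ref{thm1V2}. For the lower bound --- and, simultaneously, a matching upper bound --- I would pass to the vertex scale $1$: put $\hat u(y):=p_1^{-1}(u-l)(p_1y)$ for a suitable affine $l$, so that $\hat u$ is convex on $(0,2/p_1)^2$ and solves $\det D^2\hat u=\hat H(y)/(y_1y_2)$ with $\hat H(y)=H(p_1y)$ still satisfying $a\le\hat H\le A$ and $\|\hat H\|_{C^{0,\alpha}}\le\|H\|_{C^{0,\alpha}}$. The point $(1,0)$ lies on the edge $\{y_2=0\}$ at the fixed distance $1$ from the vertex, and by scale invariance of Donaldson's quantity (Definition \ref{DDD-12}) together with Theorem \ref{thm1V2} one has $D(\hat u;(1,y_2))=D_1(u;(p_1,p_1y_2))\le\tilde D$, uniformly in $y_2$. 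This is precisely the input that drives the proof of Proposition \ref{proposition-edges}: uniform boundedness of the relevant rescaling of $\hat u$ near $(1,0)$ via the arguments of Lemmas \ref{UPER-LOWER-bound}--\ref{UPER-LOWER-bound-2}, a bounded modulus of strict convexity for free by Heinz's theorem in dimension two, and then Caffarelli's $C^{2,\alpha}$ estimate. Hence $D^2\hat u(1,y_2)\sim\mathrm{diag}(1,y_2^{-1})$, that is $c\le y_2\hat u_{22}(1,y_2)\le C$ for all small $y_2>0$ with universal $c,C$. Since $D^2\hat u(y)=p_1D^2u(p_1y)$, this reads $c\le\tau\,u_{22}(p_1,\tau)\le C$ for $0<\tau\le p_1/2$, in particular for $0<\tau\le p_2$. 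Finally, Taylor's formula with integral remainder for $\tau\mapsto u(p_1,\tau)$ gives $u^*(p_1,0)=\int_0^{p_2}\tau\,u_{22}(p_1,\tau)\,d\tau$, so $D_1=p_2^{-1}\int_0^{p_2}\tau\,u_{22}(p_1,\tau)\,d\tau\in[c,C]$, which completes the argument.

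The step I expect to be the main obstacle is this last one: verifying that the $p_1$-dilated solution $\hat u$ falls, with constants independent of $p_1$ and $p_2$, under the hypotheses of Proposition \ref{proposition-edges} in a neighborhood of $(1,0)$, so that the boundary asymptotics $\tau\,u_{22}(p_1,\tau)\asymp 1$ transfer back to the vertex scale. Everything else --- the monotonicity of $F$, the comparison $\phi(s)\asymp s^2$ near $0$, and the implication $\phi(s)\asymp p_2/p_1\Rightarrow\lambda\asymp\sqrt{p_1p_2}$ --- is routine once $D_1\asymp 1$ is established.
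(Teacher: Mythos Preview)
Your approach is the same as the paper's at its core: both reduce the claim to the elementary estimate $\phi(s)\asymp s^2$ for $s=\lambda/p_1$ bounded away from $1$, using the defining relation together with $u_{11}(t,0)\asymp 1/t$ from \eqref{restriction-x_1-x_2}. You carry this step out more carefully and correctly than the paper does, and you rightly observe that the right-hand side of \eqref{R-R-12-10000} must be $p_2D_1$ rather than $p_1D_1$ as printed (with $p_1D_1$ the computation would give $\lambda\asymp p_1$, contradicting the lemma when $p_2\ll p_1$).

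Where you go beyond the paper is in establishing the two-sided bound $D_1\asymp 1$. The paper invokes only Theorem~\ref{thm1V2} for the upper bound and then asserts ``$0<c^{-1}\le\zeta\le c$'' from $p_2/p_1\le 1$ alone, which does not yield the lower bound on $\zeta$ (and hence not the lower bound on $\lambda/\sqrt{p_1p_2}$). Your route --- rescale to the vertex scale via $\hat u(y)=p_1^{-1}(u-l)(p_1y)$ and re-run the edge analysis of Section~\ref{section--2} near $(1,0)$ --- is sound: the $D$-bound transfers by scale-invariance through Theorem~\ref{thm1V2}, the local-boundedness mechanism of Lemmas~\ref{UPER-LOWER-bound}--\ref{UPER-LOWER-bound-2} applies since $\hat u_{11}(t,0)=H(p_1t,0)/t\asymp 1$ for $t$ near $1$, and Heinz's theorem and Caffarelli's estimate are scale-free. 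So the verification you flag as the main obstacle does go through with constants independent of $p_1,p_2$, and the integral identity $D_1=p_2^{-1}\int_0^{p_2}\tau\,u_{22}(p_1,\tau)\,d\tau$ then closes the argument. In short, your proof supplies precisely the missing ingredient in the paper's argument.
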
  
	
\begin{proof}  
By referencing the definition of \( \lambda \) in identity \eqref{R-R-12-10000}, along with \eqref{restriction-x_1-x_2}, we derive:  
\[
\lambda \log \left( \frac{p_1+\lambda}{p_1-\lambda} \right) \sim p_2
\]
since Theorem~\ref{thm1V2} ensures that \( D_1 \) remains universally bounded. Defining \( \zeta := \frac{\lambda}{p_1} \), we can rewrite the expression as:  
\begin{equation}
\label{Map-axillary}
F(\zeta):=\zeta \log \left( \frac{1 + \zeta}{1 - \zeta} \right) \sim \frac{p_2}{p_1}.
\end{equation}  
Given our assumption that \( \frac{p_2}{p_1} \leq 1 \), it follows that \( \zeta \) remains universally bounded, i.e.,  
\[
0 < c^{-1} \leq \zeta \leq c.
\]
On the other hand, since  
\[
\frac{\lambda}{\sqrt{p_1p_2}} = \zeta \sqrt{\frac{p_1}{p_2}},
\]  
if \( p_1 \ll p_2 \), then by \eqref{Map-axillary}, \( \frac{p_2}{p_1} \) cannot belong to the image of the mapping \( \zeta \mapsto F(\zeta) \). This concludes the proof.
\end{proof}

\begin{proof}[Proof of Proposition \ref{proposition-vertices}]
Suppose that \(u\) is normalized at the point \((p_1,p_2)\) and assume
\(p_1\gg p_2\). Consider the rescaling
\[
\tilde u(x_1,x_2)
=
\frac{1}{p_2}\,
u^*(p_1+\lambda x_1,\, p_2 x_2),
\]
where, by Lemma~\ref{LLL-LLL-10}, we have \(\lambda\sim\sqrt{p_1p_2}\).
Under this transformation, \(\tilde u\) is defined on the ball
\(B_{1/2}((0,1))\).

One verifies that \(\tilde u\) is uniformly bounded and strictly convex.
As in the proof of Proposition~\ref{proposition-edges}, we may therefore
apply Caffarelli’s interior regularity theorem
(\cite{zbMATH04154994}; see also \cite[Lemma 4.41]{zbMATH06669881}).
The strict convexity follows from Heinz’s theorem
\cite{heinz1959differentialungleichung}.

Under this scaling, we compute
\[
\det D^2\tilde u(x_1,x_2) = \lambda^2 \frac{H(p_1+\lambda x_1, p_2 x_2)}{(p_1+\lambda x_1) p_2 x_2}.
\]
Given that $\lambda^2 \sim p_1 p_2$, the right-hand side belongs to $C^{0,\alpha}(B_{1/2}(0,1))$, with a norm bounded independently of $p_1$ and $p_2$. Consequently, Caffarelli’s estimate implies
\[
C^{-1} I \leq D^2\tilde u(0,1) = \begin{bmatrix}
\dfrac{\lambda^2}{p_2}u_{11}(p_1,p_2) & \lambda u_{12}(p_1,p_2) \\[6pt]
\lambda u_{12}(p_1,p_2) & p_2 u_{22}(p_1,p_2)
\end{bmatrix} \leq C I,
\]
where the constant $C = C\big(P, a, A, \|H\|_{C^{0,\alpha}(\overline P)}, \alpha, \|u\|_{C^0(\overline P)}\big)$ is uniform with respect to $p_1$ and $p_2$.

Using the scaling relation $\lambda \sim \sqrt{p_1 p_2}$ to invert the transformation, we deduce the following estimate for the Hessian of the original function:
\[
D^2u(p_1,p_2) = \begin{bmatrix}
u_{11}(p_1,p_2) & u_{12}(p_1,p_2) \\
u_{12}(p_1,p_2) & u_{22}(p_1,p_2)
\end{bmatrix} \sim \begin{bmatrix}
p_1^{-1} & 0 \\
0 & p_2^{-1}
\end{bmatrix} = D^2\mathbf{u}_0(p_1,p_2).
\]
\end{proof}

\section{Regularity near the Vertex II}
\label{section--5}

The regularity of solutions near the vertices of the polygon is the most delicate part of the analysis, since the singularities arising from the two adjacent boundary edges coalesce there. This issue was recently addressed by Huang \cite{huang2023guillemin} and by Huang and Shen \cite{huang2023monge}, who established boundary estimates for the Monge–Ampère equation on cones and polygonal domains under the Guillemin boundary condition. Building on the classical boundary regularity theory of Trudinger and Wang \cite{zbMATH05578708}, in this section we extend these results to the setting in which the right-hand side \(H\) is only Hölder continuous. The goal of this section is to prove Proposition \ref{prop:vertex-sharp}. In the first step, we prove that the function
\[
v=u-x_1\log x_1-x_2\log x_2
\]
is \(C^1\) up to the corner and satisfies a sharp pointwise \(C^{1,\alpha}\) modulus of continuity at the origin (lemma \ref{lem:C1-v}). This is essentially proven by Huang (\cite{huang2023guillemin}). For the reader's convenience, we will give the proof.
In the second step, we show that Huang's barries work for
the H\"older continuous function of the form $H(x_1,x_2)=F(x_1)G(x_2)$ (Lemma \ref{lem:product}).
Finally, we approximate a H\"older continuous function $H$ with a sequence $H_{\epsilon}$ which are of the form $H_{\epsilon}(x_1,x_2)=F_{\epsilon}(x_1)G_{\epsilon}(x_2)$ near the origin. 
\subsection{Huang's result revisited}

Throughout this subsection, we assume that \(H(x_1,x_2) \in C^{\alpha}(\overline{Q_1})\) and \(H(0,0)=1\). Let \(u=x_1\log x_1+x_2\log x_2+v(x_1,x_2)\) be a convex solution of the Monge–Ampère equation
\begin{equation}\label{eq:MA-vertex}
\det D^2u(x_1,x_2)=\frac{H(x_1,x_2)}{x_1x_2} \quad \text{in } Q_1,
\end{equation}
which satisfies the boundary conditions
\begin{equation}\label{eq:vertex-bdr-conditions}
v_{11}(x_1,0)=\frac{H(x_1,0)-1}{x_1}, \quad
v_{22}(0,x_2)=\frac{H(0,x_2)-1}{x_2},
\end{equation}
with the normalization \(v(0,0)=0\) and \(Dv(0,0)=0\). Note that equation \eqref{eq:MA-vertex} can be written in terms of the regular part \(v\) as:
\begin{equation}\label{eq:v-eq}
\bigl(x_1v_{11}+1\bigr)\bigl(x_2 v_{22}+1\bigr)-x_1x_2 v_{12}^2=H.
\end{equation}

The first step is to establish that \(v\) is \(C^1\) up to the corner with a controlled modulus of continuity at the origin.

\begin{lemma}
\label{lem:C1-v}
The function \(v\) belongs to \(C^1(\overline{Q_1})\). There exists a constant \(C\), depending only on \(\|H\|_{C^{\alpha}(\overline{Q_1})}\) and \(\|v\|_{L^{\infty}(Q_1)}\), such that
\[
\|v\|_{C^1(\overline{Q_1})} \le C.
\]
Moreover, for every \(\epsilon > 0\), there exists a \(\delta = \delta(\epsilon, \|H\|_{C^{\alpha}}, \|v\|_{\infty}) > 0\) such that
\[
|Dv(p)| \le \epsilon \qquad \text{whenever } |p| \le \delta.
\]
In particular, \(Dv\) is continuous at the origin with \(Dv(0) = \mathbf{0}\).
\end{lemma}

\begin{proof}
Since \(H \in C^{0,\alpha}(\overline{Q_1})\) and \(H(0) = 1\), we have the H\"older estimates at the origin:
\[
|H(x_1,0) - 1| \le C x_1^\alpha \qquad \text{and} \qquad |H(0,x_2) - 1| \le C x_2^\alpha.
\]
By the boundary conditions \eqref{eq:vertex-bdr-conditions}, these estimates imply
\[
|v_{11}(x_1,0)| \le C x_1^{-1+\alpha} \qquad \text{and} \qquad |v_{22}(0,x_2)| \le C x_2^{-1+\alpha}.
\]
Integrating these inequalities along the coordinate axes starting from the origin, and utilizing the normalization \(Dv(0) = \mathbf{0}\), we obtain
\[
|v_1(x_1,0)| \le C x_1^\alpha \qquad \text{and} \qquad |v_2(0,x_2)| \le C x_2^\alpha,
\]
which yields
\[
|v(x_1,0)| \le C x_1^{1+\alpha} \qquad \text{and} \qquad |v(0,x_2)| \le C x_2^{1+\alpha}.
\]

To establish the full \(C^1\) regularity up to the corner, we proceed by contradiction. Suppose there exist \(\varepsilon_0 > 0\) and a sequence of points \(p_n \in Q_1\) converging to the origin such that
\[
|Dv(p_n)| \ge \varepsilon_0.
\]
Let \(\delta_n := |p_n|\) and define the rescaled functions
\[
\tilde{v}_n(x) := \delta_n^{-1} v(\delta_n x).
\]
Then, each \(\tilde{v}_n\) satisfies the rescaled Monge-Amp\`ere type equation
\[
\big(x_1 (\tilde{v}_n)_{11} + 1\big) \big(x_2 (\tilde{v}_n)_{22} + 1\big) - x_1 x_2 (\tilde{v}_n)_{12}^2 = H(\delta_n x)
\]
on the expanding domains \(\Omega_n := \delta_n^{-1} Q_1\). 

The uniform convexity of \(v\) together with interior and edge regularity estimates provide uniform \(C^{2,\beta}\) bounds on compact subsets away from the coordinate axes. Meanwhile, the boundary estimates established above control the behavior of \(\tilde{v}_n\) near the axes. Consequently, by Arzel\`a-Ascoli, we can pass to a subsequence (still denoted by \(\tilde{v}_n\)) that converges locally uniformly, along with its gradient on compact subsets of \(\overline{Q_\infty} \setminus \{\mathbf{0}\}\), to a convex limit \(v_\infty\).

The limit function \(v_\infty\) solves the homogeneous equation
\[
(x_1 v_{\infty,11} + 1)(x_2 v_{\infty,22} + 1) - x_1 x_2 v_{\infty,12}^2 = 1 \qquad \text{in } Q_\infty = \{(x_1,x_2) \, : \, x_1 > 0, \, x_2 > 0\},
\]
subject to the homogeneous boundary conditions
\[
v_\infty(x_1,0) = 0, \quad \partial_1 v_\infty(x_1,0) = 0 \qquad \text{and} \qquad v_\infty(0,x_2) = 0, \quad \partial_2 v_\infty(0,x_2) = 0.
\]
By the uniqueness of the corresponding homogeneous limit problem (see Lemma~2.4 in \cite{huang2023guillemin}, where Lemma~\ref{Barrier-modified-Huang} guarantees that the required growth estimate \(|v_\infty(x)| = o(|x| \log|x|)\) holds as \(|x| \to +\infty\)), we conclude that \(v_\infty \equiv 0\). This implies \(\lim_{n\to\infty} |D\tilde{v}_n(p_n/\delta_n)| = |Dv_\infty(p_\infty)| = 0\) (where \(|p_n/\delta_n| = 1\)), which contradicts our assumption that \(|Dv(p_n)| = |D\tilde{v}_n(p_n/\delta_n)| \ge \varepsilon_0\). This contradiction completes the proof.
\end{proof}

\begin{lemma}\label{lem:product}
Let 
\[
H(x_1,x_2)=F(x_1)G(x_2),
\qquad F,G\in C^{0,\alpha}([0,1]), \qquad F(0)=G(0)=1.
\]
Let $v$ denote the remainder term in the representation
\[
u(x)=x_1\log x_1 + x_2\log x_2 + v(x).
\]
Then there exists a constant 
\[
C=C\!\left(\|F\|_{C^{0,\alpha}([0,1])},\,\|G\|_{C^{0,\alpha}([0,1])}\right)
\]
such that
\[
|v(x)| \le C\,|x|^{1+\alpha}
\qquad\text{for all } x\in Q_1.
\]
\end{lemma}

\begin{proof}
The proof is essentially the same as in Huang's~\cite[Lemma~3.1]{huang2023guillemin}. For the completeness, we give the proof here.
Since $F(0)=G(0)=1$, we have
\[
H(x_1,x_2)=F(x_1)G(x_2)=H(x_1,0)\,H(0,x_2).
\]
We first show that, near the origin,
\[
|v(x_1,x_2)-v(x_1,0)-v(0,x_2)|\le C_0 (x_1x_2)^2
\]
for some constant $C_0$. Let $\delta,r\in(0,1)$ be small constants to be chosen later, and define
\[
\overline B(x)
=
x_1\log x_1+x_2\log x_2+v(x_1,0)+v(0,x_2)
+x_1x_2\left(\frac{\delta}{r}-\frac{\delta^2}{2r^2}(x_1+x_2)\right).
\]
We claim that $\overline B$ is an upper barrier in $Q_r$.

A direct computation, exactly as in Huang's~\cite[Lemma~3.1]{huang2023guillemin}, gives
\[
x_1x_2\det D^2\overline B
\le H(x_1,0)H(0,x_2)-\frac{\delta^2}{r^2}x_1x_2
+\frac{\delta^4}{r^4}x_1^2x_2^2.
\]
Hence, if $\delta\le \frac{1}{\sqrt2}$, then
\[
x_1x_2\det D^2\overline B
\le H(x_1,0)H(0,x_2)-\frac{\delta^2}{2r^2}x_1x_2.
\]
Since in the product case
\[
H(x_1,0)H(0,x_2)=H(x_1,x_2),
\]
it follows that
\[
\det D^2\overline B\le \det D^2u
\qquad\text{in }Q_r,
\]
where $u=x_1\log x_1+x_2\log x_2+v$ is the full solution.

Next, as in~\cite[Lemma~3.1]{huang2023guillemin}, we compute
\[
(\overline B)_{11}
=
\frac{1}{x_1}+\partial_{11}v(x_1,0)-\frac{\delta^2}{r^2}x_2
=
\frac{H(x_1,0)-\frac{\delta^2}{r^2}x_1x_2}{x_1}.
\]
Since $F(0)=1$ and $F\in C^{0,\alpha}([0,1])$, we may choose $r>0$ sufficiently small so that
\[
H(x_1,0)=F(x_1)\ge \frac12
\qquad\text{for }0\le x_1\le r.
\]
Thus
\[
(\overline B)_{11}\ge \left(\frac12-\delta^2\right)x_1^{-1}\ge0
\]
in $Q_r$, provided that $\delta\le \frac12$. Similarly,
\[
(\overline B)_{22}\ge0
\]
in $Q_r$ for sufficiently small $r$. Therefore, taking $\delta=\frac14$ and $r$ small enough, the function $\overline B$ is convex in $Q_r$.

Now compare boundary values on $\partial Q_r$. Let
\[
\varepsilon(r)=\max_{Q_r}|Dv|.
\]
By Lemma~\ref{lem:C1-v}, we know that $\varepsilon(r)\to0$ as $r\to0$. Hence, after decreasing $r$ if necessary, we may assume
\[
\varepsilon(r)\le \frac{\delta-\delta^2}{2}.
\]
Then:

\begin{itemize}
\item On $x_1=0$ or $x_2=0$, we have by definition
\[
u=\overline B.
\]

\item On $x_1=r$ and $0\le x_2\le r$, we compute
\[
\overline B(r,x_2)-u(r,x_2)
=
v(r,0)+v(0,x_2)
+x_2\left(\delta-\frac{\delta^2}{2r}(r+x_2)\right)
-v(r,x_2),
\]
so
\[
\overline B(r,x_2)-u(r,x_2)
\ge (\delta-\delta^2-2\varepsilon(r))x_2\ge0.
\]

\item On $x_2=r$ and $0\le x_1\le r$, the same argument gives
\[
\overline B(x_1,r)\ge u(x_1,r).
\]
\end{itemize}

Therefore, $\overline B$ is an upper barrier for $u$ in $Q_r$. By the same argument,
\[
\underline B(x)
=
x_1\log x_1+x_2\log x_2+v(x_1,0)+v(0,x_2)
-x_1x_2\left(\frac{\delta}{r}-\frac{\delta^2}{2r^2}(x_1+x_2)\right)
\]
is a lower barrier for $u$ in $Q_r$. Consequently,
\[
|v(x_1,x_2)-v(x_1,0)-v(0,x_2)|\le C_0(x_1x_2)^2
\qquad\text{for all }x\in Q_r.
\]

It remains to estimate the boundary terms. Along the $x_1$-axis, the equation reduces to
\[
\frac{d}{dt}v(t,0)=\frac{F(t)-1}{t}.
\]
Since $F\in C^{0,\alpha}([0,1])$ and $F(0)=1$, we have
\[
|F(t)-1|\le [F]_{C^{0,\alpha}}t^\alpha,
\]
and therefore
\[
\left|\frac{d}{dt}v(t,0)\right|
\le [F]_{C^{0,\alpha}}t^{\alpha-1}.
\]
Integrating from $0$ to $t$ and using $v(0,0)=0$, we obtain
\[
|v(t,0)|\le Ct^{1+\alpha}.
\]
Similarly,
\[
|v(0,t)|\le Ct^{1+\alpha}.
\]
Combining these estimates yields
\[
|v(x)|\le C|x|^{1+\alpha}
\qquad\text{for all }x\in Q_r,
\]
after possibly adjusting $C$.
\end{proof}

\subsection{ Approximation }

To extend the vertex estimates of Lemma~\ref{lem:product} to general non-product right-hand sides \(H \in C^{0,\alpha}\), we construct a family of approximating problems with a localized product structure near the corner.

Let \(\phi \in C^\infty(\mathbb{R})\) be a standard cut-off function such that \(\phi \equiv 1\) on \([0,1]\), \(\phi \equiv 0\) on \([2,\infty)\), and \(0 \le \phi \le 1\). For \(\epsilon > 0\), we define the cut-off function on the quadrant as
\[
\phi_{\epsilon}(x_1, x_2) = \phi\left(\frac{x_1 + x_2}{\epsilon}\right).
\]
Let \(G(x_1, x_2) := H(x_1, 0) H(0, x_2)\) be the product approximation and define the modified right-hand side
\[
H_{\epsilon} := \phi_{\epsilon} G + (1 - \phi_{\epsilon}) H.
\]
By construction, \(H_{\epsilon}\) coincides with \(H\) on the coordinate axes and satisfies \(H_{\epsilon}(0) = 1\). Since \(H \in C^{0,\alpha}(\overline{Q_1})\), we have
\[
\begin{aligned}
|H(x_1, x_2) - G(x_1, x_2)| &\le |H(x_1, x_2) - H(x_1, 0)| + |H(x_1, 0) - H(0,0)H(0, x_2)| \\
&\le |H(x_1, x_2) - H(x_1, 0)| + |H(0, x_2)|\,|H(x_1, 0) - H(0, 0)| \\
&\le C(x_2^\alpha + x_1^\alpha).
\end{aligned}
\]
Since \(\phi_\epsilon\) is supported on the set where \(x_1 + x_2 \le 2\epsilon\), we obtain the uniform approximation bound:
\begin{equation}\label{eq:H-approx-bound}
|H - H_{\epsilon}| = \phi_{\epsilon} |H - G| \le C \epsilon^{\alpha}.
\end{equation}

Let \(u_{\epsilon}\) be the unique convex solution to the Dirichlet problem
\[
\det D^2 u_{\epsilon} = \frac{H_{\epsilon}}{x_1 x_2} \quad \text{in } \Delta = \{(x_1,x_2) \, : \, x_1 > 0, \, x_2 > 0, \, x_1 + x_2 < 1\},
\]
with the same Guillemin boundary conditions as \(u\) on \(\partial \Delta\).

\begin{lemma}
\label{lem:approx}
There exist constants \(A, B > 0\), depending only on the data and independent of \(\epsilon\), such that
\[
|u(x) - u_{\epsilon}(x)| \le B \epsilon^{\alpha}(x_1 + x_2) \qquad \text{whenever } x_1 + x_2 \le \frac{\epsilon}{2}.
\]
\end{lemma}

\begin{proof}
We construct an upper barrier for \(u_{\epsilon}\) of the form
\[
\tilde{u} = u + A(x_1^{1+\alpha} + x_2^{1+\alpha}) - B \epsilon^{\alpha}(x_1 + x_2).
\]
The Hessian of the perturbation \(w = \tilde{u} - u\) is given by
\[
D^2 w = A \alpha(1+\alpha) \begin{pmatrix} x_1^{\alpha-1} & 0 \\ 0 & x_2^{\alpha-1} \end{pmatrix}.
\]
Recall the identity for the determinant of the sum of two \(2 \times 2\) symmetric matrices \(M\) and \(N\):
\[
\det(M + N) = \det M + \operatorname{tr}(\operatorname{cof}(M)N) + \det N \ge \det M + \operatorname{tr}(\operatorname{cof}(M)N).
\]
Multiplying by \(x_1 x_2\) and applying this to \(D^2 \tilde{u} = D^2 u + D^2 w\), we have
\[
x_1 x_2 \det D^2 \tilde{u} \ge H + x_1 x_2 \operatorname{tr}(\operatorname{cof}(D^2 u) D^2 w) = H + x_1 x_2 (u_{22} w_{11} + u_{11} w_{22} - 2 u_{12} w_{12}).
\]
Using the lower bounds \(u_{11} \ge c_0/x_1\) and \(u_{22} \ge c_0/x_2\) from Theorem~\ref{main-theorem}, it follows that
\[
x_1 x_2 \det D^2 \tilde{u} \ge H + c c_0 A (x_1^{\alpha} + x_2^{\alpha}),
\]
where \(c = \alpha(1+\alpha) > 0\). On the domain \(\Delta_{\epsilon} = \{(x_1,x_2) \, : \, x_1 + x_2 \le \epsilon\}\), the bound \eqref{eq:H-approx-bound} implies \(H_{\epsilon} \le H + C_1 \epsilon^{\alpha}\). By choosing \(A\) sufficiently large such that \(c c_0 A \ge 2C_1\), we ensure
\[
x_1 x_2 \det D^2 \tilde{u} \ge H_{\epsilon} = x_1 x_2 \det D^2 u_{\epsilon} \quad \text{in } \Delta_{\epsilon}.
\]
Regarding the boundary \(\partial \Delta_{\epsilon}\), on the coordinate axes (\(x_1 = 0\) or \(x_2 = 0\)), we choose \(B \ge A\) to ensure \(\tilde{u} \le u = u_\epsilon\). On the remaining face \(\Gamma_{\epsilon} = \{(x_1,x_2) \, : \, x_1 + x_2 = \epsilon\}\), the difference \(|u-u_\epsilon|\) is bounded by \(C\epsilon^\alpha\) (by the global stability estimate for Alexandrov solutions of the Monge--Amp\`ere equation with respect to perturbations of the right-hand side; see \cite{DePhilippisFigalliSavin, zbMATH01614149}). Thus, by taking \(B\) sufficiently large relative to this stability constant, we obtain \(\tilde{u} \le u_{\epsilon}\) on \(\Gamma_{\epsilon}\). 

The comparison principle then implies \(\tilde{u} \le u_{\epsilon}\) in \(\Delta_{\epsilon}\). An analogous construction for a lower barrier yields the reverse inequality, completing the proof.
\end{proof}

\subsection{Sharp regularity at the vertex}

We now establish the sharp pointwise decay for the regular component \(v\).

\begin{proposition}[Sharp regularity at the vertex]
\label{prop:vertex-sharp}
Assume \(H \in C^{0,\alpha}(\overline{Q_1})\) with \(H(0) = 1\), and let \(u = x_1 \log x_1 + x_2 \log x_2 + v\) be the solution to \eqref{eq:MA-vertex}. Then \(Dv(0) = 0\), and there exists a constant \(C\), depending only on \(\|H\|_{C^{0,\alpha}}\) and \(\|v\|_{L^\infty}\), such that
\[
|v(x)| \le C|x|^{1+\alpha} \qquad \text{for all } x \in Q_{1/2}.
\]
In particular, \(v\) exhibits \(C^{1,\alpha}\) regularity at the vertex in the pointwise sense.
\end{proposition}

\begin{proof}
Fix \(x \in Q_{1/2}\) and set \(\epsilon = 2|x|\). We construct the approximating solution 
\[ u_{\epsilon} = x_1 \log x_1 + x_2 \log x_2 + v_{\epsilon}, \] corresponding to the modified right-hand side \(H_{\epsilon}\) (as defined in the previous subsection). Since \(H_{\epsilon}\) possesses a product structure in \(\Delta_{\epsilon/2}\), Lemma~\ref{lem:product} implies that its regular part satisfies
\[
|v_{\epsilon}(x)| \le C|x|^{1+\alpha},
\]
where \(C\) is a constant independent of \(\epsilon\). By Lemma~\ref{lem:approx}, we can estimate the difference between the actual solution and the approximation:
\[
|v(x)| \le |v(x) - v_{\epsilon}(x)| + |v_{\epsilon}(x)| \le B\epsilon^{\alpha}|x| + C|x|^{1+\alpha}.
\]
Substituting \(\epsilon = 2|x|\), we obtain \(|v(x)| \le C' |x|^{1+\alpha}\), which yields the desired pointwise decay.
\end{proof}

\begin{proposition}[Euclidean regularity near the vertex]
\label{prop:vertex-euclidean}
Assume that \(H \in C^{0,\alpha}(\overline{Q_1})\) with \(H(0) = 1\), and let \(u = x_1 \log x_1 + x_2 \log x_2 + v\) solve 
\[ \det D^2u = \frac{H}{x_1 x_2} \qquad \text{in } Q_1. \] 
Then there exists a constant \(C\), depending only on \(\|H\|_{C^{0,\alpha}(\overline{Q_1})}\) and \(\|v\|_{L^\infty(Q_1)}\), such that
\[
\|v\|_{C^{1,\alpha/2}(\overline{Q_{1/2}})} \le C.
\]
In particular, \(v\) enjoys \(C^{1,\alpha/2}\) regularity near the vertex.
\end{proposition}

\begin{proof}
By Proposition~\ref{prop:vertex-sharp}, we have the growth bound
\begin{equation}
\label{eq:v-bound-vertex}
|v(x)| \le C |x|^{1+\alpha} \qquad \text{for all } x \in Q_{1/2}.
\end{equation}
This implies \(v(0) = 0\), and combined with the weighted second-derivative bounds from Theorem~\ref{main-theorem} and Appendix~\ref{app:Schauder}, it ensures \(Dv(0) = \mathbf{0}\) via Lemma~\ref{lem:ode-C1}.

Let \(p = (x_1, x_2)\) and \(q = (y_1, y_2)\) be points in \(Q_{1/2}\). By fixing \(x_2\) and considering the function \(f(t) = v(t, x_2)\), the hypotheses of Lemma~\ref{lem:ode-optimal} are satisfied due to \eqref{eq:v-bound-vertex} and the weighted estimates. This implies
\begin{equation}
\label{eq:deriv1-holder}
|\partial_1 v(x_1, x_2) - \partial_1 v(y_1, x_2)| \le C |x_1 - y_1|^{\alpha/2}.
\end{equation}
Similarly, fixing \(y_1\) and applying the same lemma to \(g(t) = v(y_1, t)\) yields
\begin{equation}
\label{eq:deriv2-holder}
|\partial_2 v(y_1, x_2) - \partial_2 v(y_1, y_2)| \le C |x_2 - y_2|^{\alpha/2}.
\end{equation}
Using the triangle inequality, the total variation of the gradient is bounded by
\[
|Dv(p) - Dv(q)| \le |Dv(x_1, x_2) - Dv(y_1, x_2)| + |Dv(y_1, x_2) - Dv(y_1, y_2)| \le C |p - q|^{\alpha/2}.
\]
Thus, we conclude that \(\|v\|_{C^{1,\alpha/2}(\overline{Q_{1/2}})} \le C\).
\end{proof}

\section{Proof of Theorem \ref{Low-regularity}: A Weaker Version of the Main Result}
\label{section--4}

In this section, we establish a priori estimates for solutions to the Monge--Amp\`ere equation \eqref{MA-0000} under the assumption that the right-hand side \(H\) belongs to the H\"older space \(C^{0,\alpha}(\overline{P})\); i.e. we prove that the solution satisfies the weak Guillemin boundary condition $u - \sum_{i=1}^{N} l_i \log l_i \in C^{1,\alpha/2}(\overline{P})$.

For simplicity of exposition, we assume the existence of solutions to the boundary value problem \eqref{MA-0000} when the function \(H\) is smooth up to the boundary \(\overline{P}\). In this case, we invoke the existence results stated in Theorem~\ref{known-regular-solution}. However, this assumption is not essential; one could alternatively construct solutions directly by approximating \(H\) with a sequence of smooth functions \(H_n\) converging to \(H\) in \(C^{0,\alpha}(\overline{P})\) and applying the classical continuity method. Indeed, the existence results of Theorem~\ref{known-regular-solution}, originally established by Rubin~\cite{rubin2015monge} and Huang~\cite{huang2023guillemin}, can be recovered from the key a priori estimate established in Theorem~\ref{main-theorem} for smooth right-hand sides, combined with standard elliptic regularity and bootstrap arguments. A similar regularization approach is discussed in Donaldson's work~\cite{donaldson2008extremal}, where analogous ideas are applied to the fourth-order Abreu equation \eqref{Abreu-eq-4th}.

For the reader's convenience, we restate the theorem to be proved in this section.

\begin{theorem}[A priori estimate for $C^{0,\alpha}$ right-hand side]
\label{main-a-priori-estimate}
Let \(P = \{x \in \mathbb{R}^2 : l_i(x) > 0, \, i = 1, \dots, N\}\) be a polytope in \(\mathbb{R}^2\), where each \(l_i(x)\) is an affine linear function. Let \(\{H_n\}\) be a sequence of smooth functions on \(\overline{P}\) satisfying the uniform bounds
\(
0 < a \le H_n(x) \le A
\).
Assume that \(H_n \to H\) in \(C^{0,\alpha}(\overline{P})\), and let \(\{u_n\}\) be the corresponding solutions of the Monge--Amp\`ere equation
\[
\det D^2 u_n = \frac{H_n}{l_1 \cdots l_N} \qquad \text{in } P,
\]
subject to the Guillemin boundary condition
\[
u_n - \sum_{i=1}^{N} l_i \log l_i \in C^\infty(\overline{P}).
\]
Then there exists a convex function $u$ in the class $C^{2,\alpha}(P)$ on \(\overline{P}\) solving
\[
\det D^2 u(x) = \frac{H(x)}{l_1(x) \cdots l_N(x)} \qquad \text{in } P,
\]
and \(u\) satisfies the weak Guillemin boundary condition
\[
u - \sum_{i=1}^{N} l_i \log l_i \in C^{1,\alpha/2}(\overline{P}).
\]
\end{theorem}

\begin{proof}
Choose a sequence \(\{H_n\}\subset C^\infty(\overline P)\) such that
\( H_n \to H \) in \( C^{0,\alpha}(\overline P) \)
as \(n \to \infty\), satisfying
\[
\sup_n \|H_n\|_{C^{0,\alpha}(\overline P)} < \infty
\qquad\text{and}\qquad
H_n \ge c_0 > 0
\]
for some constant \(c_0\) independent of \(n\). For each \(n\), let \(u_n\) be the unique smooth, strictly convex solution of the Dirichlet problem
\[
\det D^2u_n=\frac{H_n}{\prod_{i=1}^N l_i}\qquad\text{in }P,
\]
subject to the Guillemin boundary condition
\[
u_n-\mathbf u_0 =: v_n \in C^\infty(\overline P),
\qquad\text{where }
\mathbf u_0:=\sum_{i=1}^N l_i\log l_i.
\]
The existence and uniqueness of such a solution \(u_n\) is guaranteed by Theorem~\ref{known-regular-solution}.

\medskip
\noindent
\textbf{STEP 1: Uniform interior estimates and local compactness.}

By the smooth Hessian estimate established in Theorem~\ref{main-theorem}, there exists a constant \(C\ge 1\), depending only on \(P\), \(c_0\), and \(\sup_n\|H_n\|_{C^{0,\alpha}(\overline P)}\), such that
\[
C^{-1}D^2\mathbf u_0 \le D^2u_n \le C D^2\mathbf u_0
\qquad\text{in }P
\]
holds for all \(n\). Let \(K\Subset P\) be an arbitrary compact subset. Since \(\mathbf u_0\) is smooth in the interior of \(P\), there exists a constant \(C_K>0\) such that \(\|D^2\mathbf u_0\|_{L^\infty(K)}\le C_K\). Hence, we obtain the uniform bound
\(
\|D^2u_n\|_{L^\infty(K)}\le C\,C_K
\).
Using the relation \(v_n = u_n - \mathbf u_0\), this interior Hessian control yields
\(
\|v_n\|_{C^2(K)}\le C_K'
\)
for a constant \(C_K'\) independent of \(n\). By the Arzel\`a--Ascoli theorem and a standard diagonal argument, there exists a subsequence (still denoted by \(\{v_n\}\)) and a function \(v \in C^1_{\mathrm{loc}}(P)\) such that
\[
v_n \to v
\quad\text{and}\quad
u_n \to u := \mathbf u_0 + v
\quad\text{in } C^1_{\mathrm{loc}}(P).
\]
Since \(H_n\to H\) in \(C^{0,\alpha}(\overline P)\), the limit \(u \in C^{2,\alpha}(P)\) is strictly convex and satisfies the Monge--Amp\`ere equation
\[
\det D^2u=\frac{H}{\prod_{i=1}^N l_i}
\qquad\text{in }P.
\]

\medskip
\noindent
\textbf{STEP 2: Uniform boundary estimates near the edges.}

Let \(p\in \partial P\) be a point belonging to the relative interior of an edge of \(P\). We choose a neighborhood \(U_p\) of \(p\) and affine coordinates mapping this neighborhood to the normal form detailed in Appendix~\ref{app:Schauder}. Applying Corollary~\ref{cor:global}, we obtain a constant \(C_p > 0\), depending only on the local geometry of \(P\), \(\alpha\), \(c_0\), and \(\sup_n\|H_n\|_{C^{0,\alpha}(\overline P)}\), such that
\[
\|v_n\|_{C^{1,\alpha/2}(U_p\cap \overline P)}\le C_p
\]
for all \(n\). Passing to the subsequential limit, we obtain
\[
\|v\|_{C^{1,\alpha/2}(U_p\cap \overline P)}\le C_p.
\]

\medskip
\noindent
\textbf{STEP 3: Uniform boundary estimates near the vertices.}

Let \(q\) be a vertex of \(P\) and \(U_q\) be a neighborhood of \(q\). By Proposition~\ref{prop:vertex-euclidean}, there exists a constant \(C_q > 0\), depending only on the local geometry of \(P\), \(\alpha\), \(c_0\), and \(\sup_n\|H_n\|_{C^{0,\alpha}(\overline P)}\), such that
\[
\|v_n\|_{C^{1,\alpha/2}(U_q\cap \overline P)}\le C_q
\]
for all \(n\). Passing to the subsequential limit, we obtain
\[
\|v\|_{C^{1,\alpha/2}(U_q\cap \overline P)}\le C_q.
\]

\medskip
\noindent
\textbf{STEP 4: Globalization and passage to the limit.}

Since the boundary \(\partial P\) is compact, we can cover it with finitely many neighborhoods \(U_1, \dots, U_m\) of edge points and vertices where the estimates from STEP 2 and STEP 3 hold. Let
\[
K := \overline P \setminus \bigcup_{j=1}^m U_j.
\]
Because \(K\) is a compact subset lying strictly in the interior of \(P\), the uniform \(C^2(K)\) bound on \(v_n\) from STEP 1 implies a uniform \(C^{1,\alpha/2}(K)\) bound. 

Combining this interior estimate with the local boundary estimates from STEP 2 and STEP 3 via a partition of unity, we conclude that
\[
\|v_n\|_{C^{1,\alpha/2}(\overline P)}\le C
\]
for a constant \(C > 0\) depending only on \(P\), \(\alpha\), \(c_0\), and \(\sup_n\|H_n\|_{C^{0,\alpha}(\overline P)}\). By the compact embedding \(C^{1,\alpha/2}(\overline P) \hookrightarrow C^{1,\beta}(\overline P)\) for \(0 < \beta < \alpha/2\), the subsequence converges to \(v\) in \(C^{1,\beta}(\overline P)\). The limit function \(v\) inherits the bound
\[
\|v\|_{C^{1,\alpha/2}(\overline P)}\le C,
\]
which implies that \(u - \mathbf u_0 \in C^{1,\alpha/2}(\overline P)\). Finally, since the limit \(u\) solves the Monge--Amp\`ere equation in \(P\) and satisfies the weak Guillemin boundary condition, uniqueness of such a weak solution guarantees that the entire original sequence \(\{u_n\}\) converges to \(u\).
\end{proof}

Having completed the proof of Theorem~\ref{Low-regularity}, we turn to Section~\ref{sec:boundary-regularity-completion}, where we upgrade the regularity exponent from $\alpha/2$ to $\alpha$ using the algebraic properties of the Monge--Ampère equation near the boundary. This improvement is secondary, however, as the crux of the result resides in establishing \(
u - \sum_{i=1}^{N} l_i \log l_i \in C^{1,\alpha/2}(\overline{P})\).

\section{Boundary regularity and the Completion of our main Result Theorem~\ref{main-theorem}}
\label{sec:boundary-regularity-completion}

In this section we complete the proof of Theorem~\ref{main-theorem}. Recall 
\[
\mathbf u_0=\sum_{i=1}^N l_i\log l_i.
\]
Proposition \ref{proposition-edges}, Proposition \ref{proposition-vertices} and  Theorem \ref{main-a-priori-estimate} imply that
\[
D^2u\sim D^2\mathbf u_0
\qquad \text{in }P,
\]
and
\[
u-\mathbf u_0\in C^{1,\alpha/2}(\overline P).
\]
Thus it remains to improve the boundary H\"older exponent from $\alpha/2$ to $\alpha$, and to establish the sharp weighted second-derivative bounds near $\partial P$. All arguments in this section are carried out within the a priori setting, assuming $u-\textbf{u}_0$ is smooth up to the boundary.

Since the statements are local, we work in a small neighborhood of a boundary point. Around an arbitrary boundary point, we pass to toric complex coordinates and use the complex Monge--Amp\`ere equation satisfied by the corresponding K\"ahler potential. Since the real Hessian of $u$ is uniformly comparable to that of the Hessian of the Guillemin model $\mathbf u_0$, the associated K\"ahler metric is uniformly equivalent to the model toric metric associated to $\mathbf{u}_0$. This allows us to invoke the local $C^{2,\alpha}$ theory for the complex Monge--Amp\`ere equation.  We then transfer the resulting regularity back to the symplectic side and combine it with calculus Lemma~\ref{lem:ode-generalized} to bootstrap the boundary exponent to the sharp value $\alpha$ and sharp blow up rate for $D^2(u-\textbf{u}_0)$ (Propositions \ref{por:edge-C1a} and \ref{por:vertex-C1a}).

\subsection{Toric complex coordinates and the complex Monge--Amp\`ere equation}
\label{subsec:toric-complex-coord}

We begin by recalling the toric setup. Let $\phi$ denote the Legendre transform of $u$, namely
\[
\phi(t)=x\cdot t-u(x),
\qquad
x_i=\partial_{t_i}\phi(t),
\qquad
t_i=\partial_{x_i}u(x).
\]
Then the Hessians are related by
\[
D^2u(x)=\bigl(D^2\phi(t)\bigr)^{-1},
\qquad
\det D^2u(x)=\frac{1}{\det D^2\phi(t)}.
\]
On the dense torus $(\mathbb C^*)^2\subset X_P$, we introduce complex coordinates $z=(z_1, z_2)$ via
\[
z_i=e^{\frac{t_i+\sqrt{-1}\theta_i}{2}},
\qquad
t_i=\log|z_i|^2,
\]
and define the potential
\(
\Psi(z)=\phi(t)
\).
Then $\Psi$ is a torus-invariant K\"ahler potential, and the corresponding K\"ahler form is given by $\omega=\sqrt{-1}\,\partial\bar\partial\Psi$.

By a direct calculation in logarithmic coordinates, the determinant of the complex Hessian satisfies:
\begin{equation}
\label{eq:complex-density-local}
\log\det(\Psi_{z_i\bar z_j})
=
-\sum_{i=1}^2 \log |z_i|^2 -\log\det D^2\phi(t).
\end{equation}
Recalling that $t_i = u_{x_i}$ and using the relation between the determinants of $D^2u$ and $D^2\phi$, we may rewrite the right-hand side using \eqref{MA-0000} as:
\[
-\sum_{i=1}^2 u_{x_i}
-\log H
+\sum_{k=1}^N \log l_k(x).
\]
The crucial point of this transformation is that, after passing to a toric chart adapted either to an edge or to a vertex, the logarithmic singularities $\log l_k$ cancel exactly against the singular terms coming from the derivatives of the Guillemin potential (the $u_{x_i}$ terms). Therefore, the right-hand side of \eqref{eq:complex-density-local} extends as a $C^{0,\alpha}$ function in the local chart.

On the other hand, by \eqref{eq-EQ-00}, the metric $\omega$ is uniformly equivalent to the model toric metric $\omega_0$ induced by the canonical potential $\mathbf u_0$. Since $\omega_0$ extends smoothly across the toric divisor in each fixed chart, it follows that in these local complex coordinates the Hermitian matrix $(\Psi_{z_i\bar z_j})$ is uniformly elliptic and bounded. Consequently, the local complex Monge--Amp\`ere equation for $\Psi$ has a $C^{0,\alpha}$ right-hand side and uniformly elliptic coefficients.

To apply the local $C^{2,\alpha}$ theory, it remains to justify that the potential $\Psi$ is locally bounded in the relevant toric chart. This is provided in the edge and vertex normalizations below. Once boundedness is established, we apply the local $C^{2,\alpha}$ estimates for the complex Monge--Amp\`ere equation (as developed, for instance, by Tosatti--Wang--Weinkove--Yang \cite{TWWY}) to conclude that, in each toric chart intersecting the boundary:
\begin{equation}
\label{eq:Psi-C2alpha}
\Psi\in C^{2,\alpha}_{\mathrm{loc}}.
\end{equation}
We now explain how \eqref{eq:Psi-C2alpha} yields the desired boundary regularity on the symplectic side by examining the transition back to the $x$ coordinates.

\subsection{The edge case}

Let $p$ be a boundary point lying in the relative interior of an edge. After an affine change of coordinates preserving the polygonal structure, we may assume that $p=(0,0)=\mathbf{0}$, that the edge is given locally by $\{x_2=0\}$, and that
\[
u=x_2\log x_2-x_2+v(x_1,x_2),
\]
where
\[
v(0,0)=0,
\qquad
\nabla v(0,0)=(0,0).
\]
By \eqref{eq-EQ-01}, we already know that
\[
v\in C^{1,\alpha/2}
\]
up to the boundary. We shall improve this to $C^{1,\alpha}$ and obtain the sharp weighted bound
\[
|v_{x_2x_2}(x_1,x_2)|\le Cx_2^{-1+\alpha}.
\]

In this normalization, the Legendre variables are
\[
t_1=u_{x_1}=v_{x_1},
\qquad
t_2=u_{x_2}=\log x_2+v_{x_2}.
\]
The first lemma gives the basic geometry of the Legendre map near the edge.

\begin{lemma}
\label{lemma-1-E}
Let consider the Legendre map $\mathcal{L}: P \to \mathbb{R}^2$, given by:
$$ (x_1,x_2) \mapsto (t_1,t_2)=(u_{x_1},u_{x_2}).$$
There is a uniform $r_0$ such that
\begin{enumerate}
    \item[(i)] $\mathcal{L}(B_{r_0}^+(\mathbf{0})) \subset (-1,1)\times(-\infty,0)$,
    \item[(ii)] $\mathcal{L}^{-1}\bigl((-r_0,r_0)\times(-\infty,-\frac{1}{r_0})\bigr)\subset B_{1/10}^+(\mathbf{0})$.
\end{enumerate}
\end{lemma}

\begin{proof}
Since $v\in C^{1,\alpha/2}(\overline P)$, its gradient is continuous up to the boundary. Hence there exists $r_0>0$, depending only on the $C^{0,\alpha/2}$ norm of $\nabla v$, such that
\[
|v_{x_1}(x)|\le \frac12,
\qquad
|v_{x_2}(x)|\le \frac12
\qquad \text{for }x\in B_{r_0}^+(\mathbf{0}).
\]
Therefore, for $x\in B_{r_0}^+(\mathbf{0})$,
\[
t_1=v_{x_1}(x)\in(-1,1).
\]
Also, if $r_0\le e^{-1/2}$ and $x_2<r_0$, then
\[
t_2=\log x_2+v_{x_2}(x)\le \log r_0+\frac12<0,
\]
which proves (i).

For (ii), let $x=\mathcal{L}^{-1}(t)$ with
\[
t\in (-r_0,r_0)\times\Bigl(-\infty,-\frac1{r_0}\Bigr).
\]
Then
\[
\log x_2=t_2-v_{x_2}(x)\le -\frac1{r_0}+\frac12.
\]
After decreasing $r_0$ if necessary, this implies $x_2<1/20$. Since moreover
\[
|t_1|=|v_{x_1}(x)|<r_0,
\]
the continuity of $v_{x_1}$ at the origin and the normalization $v_{x_1}(0,0)=0$ imply, again after shrinking $r_0$, that $|x_1|<1/20$. Hence
\[
x\in B_{1/10}^+(\mathbf{0}),
\]
as required.
\end{proof}

The next lemma gives the boundedness of the Legendre potential in the half-strip reached by the map.

\begin{lemma}
\label{lemma-2-E}
Suppose $\mathcal{L}^{-1}\bigl((-1,1)\times(-\infty,0)\bigr)\subset B_{1/2}^+(\mathbf{0})$. Then
\[
\|\phi\|_{L^\infty((-1,1)\times(-\infty,0))}\le C.
\]
\end{lemma}

\begin{proof}
By definition,
\[
\phi(t)=x\cdot \nabla u(x)-u(x),
\qquad x=\mathcal{L}^{-1}(t).
\]
Using
\[
u=x_2\log x_2-x_2+v,
\qquad
u_{x_1}=v_{x_1},
\qquad
u_{x_2}=\log x_2+v_{x_2},
\]
we compute
\[
\phi
=
x_1v_{x_1}
+x_2(\log x_2+v_{x_2})
-\bigl(x_2\log x_2-x_2+v\bigr)
=
x_1v_{x_1}+x_2v_{x_2}+x_2-v.
\]
By assumption, $x$ ranges in $B_{1/2}^+(\mathbf{0})$. Since $v$ is bounded there and $\nabla v$ is bounded by \eqref{eq-EQ-01}, each term on the right-hand side is uniformly bounded. Hence
\[
\|\phi\|_{L^\infty((-1,1)\times(-\infty,0))}\le C.
\]
\end{proof}

Passing to the toric edge chart with coordinates
\[
z_1=e^{\frac{t_1+\sqrt{-1}\theta_1}{2}},
\qquad
z_2=e^{\frac{t_2+\sqrt{-1}\theta_2}{2}},
\]
the set $(-1,1)\times(-\infty,0)$ corresponds to a neighborhood of the divisor $\{z_2=0\}$. Lemma~\ref{lemma-1-E} identifies a fixed symplectic neighborhood with a fixed complex neighborhood, and Lemma~\ref{lemma-2-E} gives the local boundedness of $\phi=\Psi$. Therefore the discussion in the previous subsection applies, and we obtain $\Psi\in C^{2,\alpha}$ in a neighborhood of the edge point in the complex chart.

We now explain how this implies the first gain on the symplectic side. Since
\[
x_i=\phi_{t_i},
\qquad
D^2\phi=(D^2u)^{-1},
\]
the $C^{2,\alpha}$ regularity of $\Psi=\phi(t)$ in the toric chart implies that the coefficients of $D^2\phi$ are $C^{0,\alpha}$ in the chart variables. Translating this through the explicit edge normal form, and using the comparability \eqref{eq-EQ-00}, we obtain that $v_{x_1}$ is $C^{0,\alpha}$ up to the boundary and that the normal second derivative satisfies an estimate
\begin{equation}
\label{eq:edge-initial-weight}
|v_{x_2x_2}(x_1,x_2)|\le Cx_2^{-1+\beta}
\end{equation}
for  $\beta=\alpha/2$. This yields the following proposition.

\begin{proposition}
\label{por:edge-C1a}
Let $p$ lie in the relative interior of an edge of $P$. In the above normalization, there exist a neighborhood $U_p$ of $p$ and a constant $C$ depending only on the data of the problem such that
\[
[v_{x_1}]_{C^{0,\alpha}(\overline{U_p}\cap \overline P)}
+
[v_{x_2}]_{C^{0,\beta}(\overline{U_p}\cap \overline P)}
\le C,
\]
where $\beta=\alpha/2$, and
\begin{equation}
\label{eq:edge-weighted-beta}
|v_{x_2x_2}(x_1,x_2)|\le Cx_2^{-1+\beta}
\qquad \text{in }U_p\cap P.
\end{equation}
Moreover, after iterating Lemma~\ref{lem:ode-generalized}, the exponent $\beta$ may be improved to $\alpha$. In particular,
\begin{equation}
\label{eq:edge-sharp-weight}
|v_{x_2x_2}(x_1,x_2)|\le Cx_2^{-1+\alpha}
\qquad \text{in }U_p\cap P,
\end{equation}
and
\begin{equation}
\label{eq:edge-C1alpha}
\|v\|_{C^{1,\alpha}(\overline{U_p}\cap \overline P)}\le C.
\end{equation}
\end{proposition}

\begin{proof}
The preceding discussion, based on Lemmas~\ref{lemma-1-E} and \ref{lemma-2-E}, yields the initial estimate \eqref{eq:edge-initial-weight}, which is exactly \eqref{eq:edge-weighted-beta} after possibly shrinking $U_p$.

Fix $x_1$ and consider the function
\[
f(s)=v(x_1,s)-v(x_1,0)-s\,v_{x_2}(x_1,0).
\]
Then
\[
f(0)=f'(0)=0,
\qquad
f''(s)=v_{x_2x_2}(x_1,s).
\]
Since $v\in C^{1,\alpha/2}$ by \eqref{eq-EQ-01}, we have
\[
|f(s)|\le Cs^{1+\alpha/2}.
\]
Together with \eqref{eq:edge-weighted-beta}, Lemma~\ref{lem:ode-generalized} yields
\[
|f'(s)|\le Cs^{(\alpha/2+\beta)/2}.
\]
Equivalently,
\[
|v_{x_2}(x_1,s)-v_{x_2}(x_1,0)|
\le
Cs^{(\alpha/2+\beta)/2},
\]
so that $v_{x_2}$ is H\"older continuous in the normal direction with an improved exponent. Repeating this argument finitely many times, we produce a monotone sequence of exponents converging to $\alpha$. Since at each step the corresponding weighted estimate for $v_{x_2x_2}$ improves accordingly, a standard compactness argument gives the endpoint estimate \eqref{eq:edge-sharp-weight}.

Finally, $v_{x_1}$ is already $C^{0,\alpha}$ up to the boundary from the complex-side estimate, and the improved normal control of $v_{x_2}$ yields the full bound \eqref{eq:edge-C1alpha}. This proves the proposition.
\end{proof}

\begin{remark}
The point of the preceding argument is that the complex $C^{2,\alpha}$ estimate supplies one nontrivial gain in the normal direction, while Lemma~\ref{lem:ode-generalized} upgrades this gain to the sharp exponent $\alpha$ by exploiting the already known vanishing order of $v$ at the boundary.
\end{remark}

\subsection{The vertex case}

We next consider a vertex. After an affine normalization, we may assume that the vertex is the origin and that near $\mathbf{0}$ the polygon coincides with the first quadrant. We then write
\[
u=x_1\log x_1-x_1+x_2\log x_2-x_2+v(x_1,x_2),
\]
with
\[
v(0,0)=0,
\qquad
\nabla v(0,0)=(0,0).
\]
Again, \eqref{eq-EQ-01} gives the preliminary regularity
\[
v\in C^{1,\alpha/2}
\]
up to the corner.

In this normalization,
\[
t_i=u_{x_i}=\log x_i+v_{x_i},
\qquad i=1,2.
\]
The analogue of Lemma~\ref{lemma-1-E} is the following.

\begin{lemma}
\label{lemma-1-V}
There is a uniform $r_0$ such that
\begin{enumerate}
    \item[(i)] $\mathcal{L}(B_{r_0}(\mathbf{0})\cap P)\subset (-\infty,0)\times(-\infty,0)$,
    \item[(ii)] $\mathcal{L}^{-1}\bigl((-\infty,-\frac1{r_0})\times(-\infty,-\frac1{r_0})\bigr)\subset B_{1/10}(\mathbf{0})$.
\end{enumerate}
\end{lemma}

\begin{proof}
Since $v\in C^{1,\alpha/2}(\overline P)$ and $\nabla v(0,0)=\mathbf{0}$, there exists $r_0>0$ such that
\[
|v_{x_i}(x)|\le \frac12
\qquad \text{for }x\in B_{r_0}(\mathbf{0})\cap P,\ i=1,2.
\]
Hence, for such $x$,
\[
t_i=\log x_i+v_{x_i}(x)\le \log r_0+\frac12<0
\]
provided $r_0$ is chosen sufficiently small. This proves (i).

For (ii), let $x=\mathcal{L}^{-1}(t)$ with
\[
t\in \Bigl(-\infty,-\frac1{r_0}\Bigr)\times\Bigl(-\infty,-\frac1{r_0}\Bigr).
\]
Then for $i=1,2$,
\[
\log x_i=t_i-v_{x_i}(x)\le -\frac1{r_0}+\frac12.
\]
After decreasing $r_0$ if necessary, this implies
\[
x_1<\frac1{20},
\qquad
x_2<\frac1{20},
\]
and therefore $x\in B_{1/10}(\mathbf{0})$.
\end{proof}

The vertex version of the boundedness statement is equally simple.

\begin{lemma}
\label{lemma-2-V}
Suppose $\mathcal{L}^{-1}\bigl((-\infty,0)\times(-\infty,0)\bigr)\subset B_{1/2}(\mathbf{0})$. Then
\[
\|\phi\|_{L^\infty((-\infty,0)\times(-\infty,0))}\le C.
\]
\end{lemma}

\begin{proof}
Using
\[
u=x_1\log x_1-x_1+x_2\log x_2-x_2+v,
\qquad
u_{x_i}=\log x_i+v_{x_i},
\]
we compute
\begin{align*}
\phi(t)
&=x\cdot \nabla u-u \\
&=\sum_{i=1}^2 x_i(\log x_i+v_{x_i})
-\Bigl(\sum_{i=1}^2(x_i\log x_i-x_i)+v\Bigr) \\
&=x_1v_{x_1}+x_2v_{x_2}+x_1+x_2-v.
\end{align*}
By assumption, $x$ ranges in $B_{1/2}(\mathbf{0})\cap P$. Since $v$ and $\nabla v$ are bounded there, the right-hand side is uniformly bounded, and therefore
\[
\|\phi\|_{L^\infty((-\infty,0)\times(-\infty,0))}\le C.
\]
\end{proof}

Passing to the toric vertex chart with coordinates
\[
z_i=e^{\frac{t_i+\sqrt{-1}\theta_i}{2}},
\qquad i=1,2,
\]
the quadrant $(-\infty,0)\times(-\infty,0)$ corresponds to a neighborhood of the toric fixed point. Lemmas~\ref{lemma-1-V} and \ref{lemma-2-V} thus place us in the same situation as in the edge case: the potential $\Psi=\phi$ is locally bounded, the complex Monge--Amp\`ere density is $C^{0,\alpha}$ by the cancellation in \eqref{eq:complex-density-local}, and the metric is uniformly equivalent to the smooth model metric. Hence
\[
\Psi\in C^{2,\alpha}
\]
in a neighborhood of the toric fixed point corresponding to the vertex.

Transferring this information back to the symplectic side gives initial weighted control of the pure second derivatives in each coordinate direction:
\begin{equation}
\label{eq:vertex-initial-weight}
|v_{x_1x_1}(x_1,x_2)|\le Cx_1^{-1+\beta},
\qquad
|v_{x_2x_2}(x_1,x_2)|\le Cx_2^{-1+\beta}
\end{equation}
for $\beta=\alpha/2$. 

We record the resulting estimate as follows.

\begin{proposition}
\label{por:vertex-C1a}
Let $p$ be a vertex of $P$. In the above normalization, there exist a neighborhood $U_p$ of $p$ and a constant $C$ depending only on the data of the problem such that
\[
\|v\|_{C^{1,\beta}(\overline{U_p}\cap \overline P)}\le C,
\]
where $\beta=\alpha/2$, and
\begin{equation}
\label{eq:vertex-weighted-beta}
|v_{x_1x_1}(x_1,x_2)|\le Cx_1^{-1+\beta},
\qquad
|v_{x_2x_2}(x_1,x_2)|\le Cx_2^{-1+\beta}
\qquad \text{in }U_p\cap P.
\end{equation}
After iterating Lemma~\ref{lem:ode-generalized} in each coordinate direction, the exponent $\beta$ improves to $\alpha$. Consequently,
\begin{equation}
\label{eq:vertex-weighted-alpha}
|v_{x_1x_1}(x_1,x_2)|\le Cx_1^{-1+\alpha},
\qquad
|v_{x_2x_2}(x_1,x_2)|\le Cx_2^{-1+\alpha}
\qquad \text{in }U_p\cap P,
\end{equation}
and
\begin{equation}
\label{eq:vertex-C1alpha}
\|v\|_{C^{1,\alpha}(\overline{U_p}\cap \overline P)}\le C.
\end{equation}
\end{proposition}

\begin{proof}
The preceding discussion, based on Lemmas~\ref{lemma-1-V} and \ref{lemma-2-V}, yields \eqref{eq:vertex-initial-weight}, which is exactly \eqref{eq:vertex-weighted-beta} after possibly shrinking $U_p$.

To improve the exponent, fix $x_2$ and apply Lemma~\ref{lem:ode-generalized} to the one-variable function
\[
f_{x_2}(s)=v(s,x_2)-v(0,x_2)-s\,v_{x_1}(0,x_2).
\]
Since $v\in C^{1,\alpha/2}$ up to the boundary, we have
\[
|f_{x_2}(s)|\le Cs^{1+\alpha/2},
\]
while \eqref{eq:vertex-weighted-beta} gives
\[
|f_{x_2}''(s)|=|v_{x_1x_1}(s,x_2)|\le Cs^{-1+\beta}.
\]
Lemma~\ref{lem:ode-generalized} therefore improves the H\"older exponent of $v_{x_1}$ in the $x_1$-direction. Repeating the same argument with the roles of $x_1$ and $x_2$ exchanged improves the exponent of $v_{x_2}$ in the $x_2$-direction.

Iterating alternately in the two coordinate directions, we obtain a sequence of exponents increasing to $\alpha$. As in the edge case, the endpoint estimate follows by a standard limiting argument, yielding \eqref{eq:vertex-weighted-alpha}. Once the pure second derivatives satisfy these sharp weighted bounds, integration along coordinate segments gives \eqref{eq:vertex-C1alpha}.

It remains to control the mixed first-order H\"older seminorms. This follows from the already established H\"older control of each component of $\nabla v$ along coordinate directions, together with the uniform local equivalence between Euclidean distance and polygonal distance in the fixed corner chart. Hence $\nabla v$ is jointly $C^{0,\alpha}$ in the neighborhood of the vertex, which completes the proof.
\end{proof}

\subsection{Globalization and completion of the proof}

We now combine the local edge and vertex estimates to conclude the proof of Theorem~\ref{main-theorem}.

\begin{proof}[Completion of the proof of Theorem~\ref{main-theorem}]
Let
\[
v=u-\mathbf u_0.
\]
We already know from \eqref{eq-EQ-00} that
\[
C^{-1}D^2\mathbf u_0\le D^2u\le C D^2\mathbf u_0
\qquad \text{in }P,
\]
which is the first assertion of Theorem~\ref{main-theorem}.

It remains to prove the global $C^{1,\alpha}$ estimate for $v$ and the sharp weighted interior bound for $D^2v$ near $\partial P$.

Let $p\in\partial P$. If $p$ lies in the relative interior of an edge, Proposition~\ref{por:edge-C1a} gives a neighborhood $U_p$ of $p$ such that
\[
\|v\|_{C^{1,\alpha}(\overline{U_p}\cap \overline P)}\le C,
\]
and
\[
|D^2v(x)|\le C\,\operatorname{dist}(x,\partial P)^{-1+\alpha}
\qquad \text{for }x\in U_p\cap P.
\]
If $p$ is a vertex, Proposition~\ref{por:vertex-C1a} yields the same conclusion in a neighborhood of $p$.

Since $\partial P$ is compact, we may choose finitely many such neighborhoods
\[
U_{p_1},\dots,U_{p_m}
\]
covering $\partial P$. Therefore,
\begin{equation}
\label{eq:global-boundary-C1alpha}
\|v\|_{C^{1,\alpha}\left(\bigcup_{j=1}^m(\overline{U_{p_j}}\cap \overline P)\right)}\le C,
\end{equation}
and
\begin{equation}
\label{eq:global-boundary-D2}
|D^2v(x)|
\le
C\,\operatorname{dist}(x,\partial P)^{-1+\alpha}
\qquad
\text{whenever }x \text{ belongs to this boundary collar.}
\end{equation}

Away from the boundary, standard interior regularity for the real Monge--Amp\`ere equation applies. Indeed, on every compact subset $K\Subset P$, the equation
\[
\det D^2u=\frac{H}{l_1\cdots l_N}
\]
has a strictly positive $C^{0,\alpha}$ right-hand side, so classical interior estimates imply that $u$, and hence $v$, are $C^{2,\alpha}$ on $K$. In particular, $v\in C^{1,\alpha}(K)$ for every such $K$.

Combining the interior estimates with the boundary estimate \eqref{eq:global-boundary-C1alpha}, we conclude that
\[
v=u-\mathbf u_0\in C^{1,\alpha}(\overline P),
\]
and
\[
\|v\|_{C^{1,\alpha}(\overline P)}\le C.
\]
Moreover, \eqref{eq:global-boundary-D2} shows that
\[
\operatorname{dist}(x,\partial P)^{-\alpha+1}|D^2v(x)|\le C
\qquad \text{for all }x\in P.
\]
This is exactly the remaining assertion of Theorem~\ref{main-theorem}. The proof is complete.
\end{proof}

\appendix

\section{Boundary expansions for a Keldysh-type operator}
\label{app:Schauder}

In this appendix, we establish boundary regularity and expansions near the degenerate boundary for a class of variable-coefficient Keldysh-type operators. Specifically, we consider the model degenerate elliptic operator
\[
\mathcal L u \coloneqq u_{xx} + y u_{yy}
\]
defined on the upper half-plane $\mathbb H \coloneqq \{(x,y) \in \mathbb R^2 : y > 0\}$. While $\mathcal L$ is uniformly elliptic in the interior of $\mathbb H$, it degenerates along the boundary $\{y = 0\}$. This operator arises naturally from the partial Legendre transform of the two-dimensional Monge–Ampère equation
\begin{equation}\label{after-leg-eq}
\det D^2 u(x_1,x_2) = \frac{H(x_1,x_2)}{x_2}.
\end{equation}
These Keldysh type operators have been widely studied in the context of degenerate elliptic equations (see, e.g., \cite{zbMATH06631393,MR1724206,zbMATH01967886,
zbMATH07619008,zbMATH06376249,zbMATH07319422,
jang2024Holderregularitysolutionsdegenerate,zbMATH07942221,
han2025globalschauderregularityconvergence,
kukavica2023pointwiseschauderestimateselliptic,
zbMATH07210270}). 

A key feature of $\mathcal L$ is its anisotropic scaling behavior. For any $\lambda > 0$, the scaled function 
\(
u_\lambda(x,y) \coloneqq u(\lambda^{1/2}x, \lambda y)
\)
satisfies the scaling relation
\(
\mathcal L u_\lambda(x,y) = \lambda (\mathcal L u)(\lambda^{1/2}x, \lambda y)
\).
In other words, the operator has homogeneity $-1$ under the parabolic-type scaling $(x, y) \mapsto (\lambda^{1/2}x, \lambda y)$. This scaling dictates the geometry of the domain and the regularity classes used throughout the analysis.

\subsection*{The Model Geometry and Sublevel Sets}

To study the local behavior of solutions near the degenerate boundary, we introduce a model function that generates the natural geometry (the "sections") associated with the operator. Let
\[
u_0(x,y) \coloneqq x^2 - y\log y.
\]
A direct computation shows that $u_0$ solves the inhomogeneous equation:
\(
\mathcal{L}u_0 = 1
\).
For any $h > 0$, we define the section (or sublevel set) of $u_0$ by
\(
S(h) \coloneqq \{(x,y) \in \mathbb{H} : u_0(x,y) < h\}
\).
The boundary $\partial S(h)$ decomposes naturally into a flat degenerate portion and a curved upper boundary
\(
\partial S(h) = \F_h \cup \C_h
\)
where $\F_h \coloneqq \{(x,0) : x^2 < h\}$ lies on the boundary $\{y=0\}$, and $\C_h \coloneqq \{(x,y) \in \mathbb H : u_0(x,y) = h\}$ is the upper boundary curve.

\subsection*{The Compatibility Condition}

Before analyzing variable-coefficient equations, we identify the necessary compatibility conditions at the boundary. Consider the variable-coefficient operator
\[
\mathcal{H} w \coloneqq H(x,y) w_{xx} + y w_{yy} = f(x,y)
\]
on a domain touching $\{y=0\}$. Suppose $w$ admits a formal expansion near the boundary of the form:
\[
w(x,y) = \phi(x) + A(x) y\log y + B(x) y + o(y).
\]
Substituting this expansion into the equation, we find:
\begin{align*}
\mathcal{H} w &= H(x,y) \left( \phi''(x) + A''(x) y\log y + B''(x) y + o(y) \right) + y \left( \frac{A(x)}{y} + o(1/y) \right) \\
&= H(x,y)\phi''(x) + A(x) + o(1) \quad \text{as } y \to 0^+.
\end{align*}
If we require the solution and its derivatives to remain bounded near $\{y=0\}$, we must prevent logarithmic singularities. Thus, to ensure the expansion is consistent as $y \to 0^+$, we must enforce:
\begin{equation}\label{eq:compatibility}
f(x,0) = H(x,0)\phi''(x), \quad \text{where } \phi(x) = w(x,0).
\end{equation}
This compatibility relation \eqref{eq:compatibility} is assumed throughout the rest of our analysis whenever a Dirichlet boundary condition is prescribed on $\{y=0\}$.

\subsection*{Failure of Classical Boundary Regularity}

It is important to note that the classical Euclidean $C^{2,\alpha}$ regularity generally fails up to the boundary for these degenerate equations. The following one-dimensional example demonstrates that even with smooth coefficients and $C^{0,\alpha}$ data, solutions may fail to be twice differentiable at $\{y=0\}$.

\begin{example}[Failure of classical boundary regularity] \label{one-dim-ex}
Consider the one-dimensional equation $u''(t) = \frac{H(t)}{t}$ on $(0,1)$ with $H(t) = \sqrt{t}$. Note that $H \in C^{0,1/2}([0,1])$. Integrating the equation
\(
u''(t) = \frac{\sqrt{t}}{t} = t^{-1/2}
\)
twice yields the solution
\(
u(t) = \frac{4}{3} t^{3/2} + C_1 t + C_2
\).
While $u \in C^{1,1/2}([0,1])$, the second derivative $u''(t) = t^{-1/2}$ blows up as $t \to 0^+$. Thus, $u \notin C^2([0,1])$.
\end{example}

\subsection*{The Constant-Coefficient Expansion}

Our perturbative argument relies on a precise boundary expansion for the constant-coefficient operator. The following result, adapted from Rubin \cite{rubin2015monge}, estimates how closely a solution to the homogeneous equation can be approximated by its boundary trace and a logarithmic corrector.

\begin{proposition}[Constant-coefficient expansion \cite{rubin2015monge}] \label{prop:const}
Let $\varphi \in C^4\left(\overline{S(1)} \cap \{y=0\}\right)$. Extend $\varphi$ to a smooth function on $\partial S(1)$ such that it restricts to $\varphi$ on $\F_1$ and is constant in $y$ along $\C_1$. Let $u$ be the unique bounded solution of the Dirichlet problem
\[
\begin{cases}
\mathcal{L} u = 0 & \text{in } S(1), \\
u = \varphi & \text{on } \partial S(1),
\end{cases}
\]
satisfying $\|u\|_{L^\infty(S(1))} \le 1$. Then there exists a universal constant $C > 0$ such that in the scaled domain $S(1/2)$, we have:
\[
\bigl|u(x,y) - \bigl(\varphi(x) - \varphi''(x) y\log y + \varphi'(x) y\bigr)\bigr| \le C y^2 |\log y|.
\]
Furthermore, the derivatives of $u$ satisfy the bounds:
\[
\|u_x\|_{L^\infty(S(1/2))}, \ \|u_{xx}\|_{L^\infty(S(1/2))} \le C \|\varphi\|_{C^4}.
\]
\end{proposition}

\begin{remark}
If we choose the boundary data $\varphi(x) = x^2/2$, the corresponding solution $u_0$ satisfies:
\[
|u_0(x,y) - w_0(x,y)| \le C y^2 |\log y| \quad \text{in } S(1/2),
\]
where $w_0(x,y) = \frac{x^2}{2} + xy - y\log y$ is an explicit solution to the equation $\mathcal L w_0 = 0$ with the flat boundary condition $w_0(x,0) = x^2/2$. Here, $u_0$ is the unique solution honoring the boundary values on the entire boundary $\partial S(1)$, while $w_0$ serves as a local polynomial-type approximation sharing the same flat boundary trace.
\end{remark}

\subsection*{The Perturbation Lemma}

Using the above constant-coefficient expansion (Proposition~\ref{prop:const}) as a reference, we now show that solutions to the variable-coefficient equation with small perturbations remain close to the model solution $u_0$.

\begin{lemma}[Perturbation lemma] \label{lem:pert}
Let $\alpha \in (0,1)$, and let $u_0$ be the reference solution from Proposition~\ref{prop:const} corresponding to $\varphi(x) = x^2/2$. There exist positive constants $\delta_0 > 0$, $\theta_0 \in (0, 1/2)$, and $\gamma \in (0, 1]$ (depending only on $\alpha$) such that if:
\[
H(0,0) = 1, \quad f(0,0) = 0, \quad \|H - 1\|_{C^{0,\alpha}(S(1))} \le \delta_0, \quad \|f\|_{C^{0,\alpha}(S(1))} \le \delta_0,
\]
and $w$ solves the Dirichlet problem:
\[
\begin{cases}
H w_{xx} + y w_{yy} = f & \text{in } S(1), \\
w = \frac{x^2}{2} & \text{on } \partial S(1),
\end{cases}
\]
with $\|w\|_{L^\infty(S(1))} \le 1$, then:
\[
|w(x,y) - u_0(x,y)| \le \theta_0^{1+2\gamma} \quad \text{in } S(\theta_0).
\]
\end{lemma}
	
\begin{proof}
	We argue by contradiction. Suppose the conclusion fails. Then for some fixed
	$\theta_0\in (0,1/2)$ and for every sequence $\delta_k\downarrow 0$ there exist
	$H_k,f_k,w_k$ such that
	\[
	H_k(0,0)=1,\quad f_k(0,0)=0, \quad
	\|H_k-1\|_{C^{0,\alpha}(S(1))}\le \delta_k,\quad
	\|f_k\|_{C^{0,\alpha}(S(1))}\le \delta_k,
	\]
	and $w_k$ solves
	\begin{equation}\label{eq:pertwk}
	\begin{cases}
		H_k (w_k)_{xx}+y (w_k)_{yy}=f_k & \text{in } S(1),\\
		w_k=\dfrac{x^2}{2} & \text{on } \partial S(1),
	\end{cases}
	\end{equation}
	with $\|w_k\|_{L^\infty(S(1))}\le 1$, but
	\begin{equation}\label{eq:contradiction-assumption}
	\|w_k-u_0\|_{L^\infty(S(\theta_0))}>\theta_0^{\,1+2\gamma}
	\end{equation}
	for every $k$.

	We shall derive a contradiction.

	\medskip
	\noindent\textbf{STEP I: Compactness.}
	We claim that $\{w_k\}$ is precompact in $C(\overline{S(r)})$ for every
	$r\in (0,1)$. Fix $r\in (0,1)$. Since $\|w_k\|_{L^\infty(S(1))}\le 1$, the sequence is uniformly
	bounded. It remains to prove equicontinuity on $\overline{S(r)}$. If $K\Subset S(1)\cap \{y>0\}$, then for all large $k$ the operator
	\(L_k:=H_k(x,y)\partial_{xx}+y\partial_{yy}
	\)
	is uniformly elliptic on $K$, because $H_k\to 1$ uniformly and $y$ is bounded away
	from $0$ on $K$. Hence standard interior Schauder estimates yield uniform
	$C^{1,\alpha}$ bounds for $w_k$ on compact subsets of $S(1)\cap \{y>0\}$.

	Near the degenerate boundary $\{y=0\}$, the equation in \eqref{eq:pertwk} is of
	Keldysh type. Since the coefficients $H_k$ and $f_k$ is uniformly bounded in $C^{0,\alpha}$,
	the boundary H\"older regularity theory for degenerate elliptic equations of
	Keldysh type applies; see, for instance, Lieberman \cite{zbMATH06631393}, and also
	Daskalopoulos--Lee \cite{zbMATH01967886}. Therefore there exist
	$\beta\in (0,1)$ and $C_r>0$, independent of $k$, such that
	\(
	\|w_k\|_{C^{0,\beta}(\overline{S(r)})}\le C_r
	\).	In particular, $\{w_k\}$ is equicontinuous on $\overline{S(r)}$. By the Arzel\`a--Ascoli theorem and a diagonal argument, after passing to a subsequence we may assume that \( w_k\to w_\infty \) locally uniformly in \(\overline{S(1)}\).

	\medskip
	\noindent\textbf{STEP II: The limit equation.}
	We now show that $w_\infty$ solves
	\begin{equation}\label{eq:model-limit}
	\begin{cases}
		(w_\infty)_{xx}+y(w_\infty)_{yy}=0 & \text{in } S(1),\\
		w_\infty=\dfrac{x^2}{2} & \text{on } \partial S(1).
	\end{cases}
	\end{equation}
    Indeed, since $H_k\to 1$ uniformly in $S(1)$ and $f_k\to 0$ uniformly in $S(1)$,
	the equations \eqref{eq:pertwk} converge to
	\(
	w_{xx}+y w_{yy}=0
	\).
	By the stability of viscosity solutions under locally uniform convergence,
	$w_\infty$ is a viscosity solution of
	\((w_\infty)_{xx}+y(w_\infty)_{yy}=0
	\) in \(S(1)\). Moreover, since each $w_k$ satisfies the boundary condition
	$w_k=x^2/2$ on $\partial S(1)$ and the convergence is uniform up to compact
	subsets touching the boundary, we obtain
	\(w_\infty=\frac{x^2}{2}\) on \(\partial S(1)\).
    By Proposition~\ref{prop:const}, the model problem \eqref{eq:model-limit} has the
	unique solution $u_0$. Hence
	\(
	w_\infty=u_0
	\) in \(\overline{S(1)}\)

	\medskip
	\noindent\textbf{STEP III: Contradiction.}
	Since $w_k\to u_0$ uniformly on $\overline{S(\theta_0)}$, we have
	\(
	\|w_k-u_0\|_{L^\infty(S(\theta_0))}\to 0
	\).
	Choosing $k$ large enough, this gives
	\(
	\|w_k-u_0\|_{L^\infty(S(\theta_0))}\le \theta_0^{\,1+2\gamma}
	\),	which contradicts \eqref{eq:contradiction-assumption}. 

\end{proof}

\subsection*{Approximation for solutions with zero boundary data}
	
We also need a perturbation lemma for solutions that vanish on the boundary.
	
\begin{lemma}[Perturbation with zero boundary data]\label{lem:zero}
Let $\alpha\in(0,1)$. There exist constants $\delta_1>0$, $\theta_1\in(0,1/2)$, and $\gamma\in(0,1]$ such that if
\[
H(0,0)=1,\quad f(0,0)=0, \quad
\|H-1\|_{C^{0,\alpha}(S(1))}\le \delta_1, \quad
\|f\|_{C^{0,\alpha}(S(1))}\le \delta_1,
\]
and $v$ solves
\[
\begin{cases}
H v_{xx}+y v_{yy}=f & \text{in } S(1),\\
v=0 & \text{on } \partial S(1),
\end{cases}
\]
with
\(
\|v\|_{L^\infty(S(1))}\le 1
\),
then there exist $a,b\in\mathbb R$ such that
\begin{equation}
\label{z-f-c-eq01}
\|v-(ax+b)y\|_{L^\infty(S(\theta_1))}
\le \theta_1^{\,1+2\gamma}.
\end{equation}
\end{lemma}

\begin{proof}
We argue by contradiction.
Assume that the conclusion fails. Then for some fixed $\theta_1\in(0,1/2)$ there exist a sequence $\delta_k\downarrow 0$ and functions
\(
H_k,\ f_k,\ v_k
\)
such that
\[
H_k(0,0)=1,\quad f_k(0,0)=0, \quad
\|H_k-1\|_{C^{0,\alpha}(S(1))}\le \delta_k,\quad
\|f_k\|_{C^{0,\alpha}(S(1))}\le \delta_k,
\]
\[
\begin{cases}
H_k (v_k)_{xx}+y (v_k)_{yy}=f_k & \text{in } S(1),\\
v_k=0 & \text{on } \partial S(1),
\end{cases}
\qquad
\|v_k\|_{L^\infty(S(1))}\le 1,
\]
but
\begin{equation}
\label{Z-eq-01}
\inf_{a,b\in\mathbb R}
\|v_k-(ax+b)y\|_{L^\infty(S(\theta_1))}
>\theta_1^{\,1+2\gamma}.
\end{equation}

We shall derive a contradiction by compactness.

\medskip
\noindent
\textbf{STEP 1. Uniform Hölder control and compactness.}
Since $\|v_k\|_{L^\infty(S(1))}\le 1$, the right-hand side $f_k$ is uniformly small and the coefficients $H_k$ converge uniformly to $1$. In particular, the family $\{v_k\}$ satisfies a uniformly elliptic equation away from the degenerate boundary $y=0$, and a Keldysh-type boundary equation up to $y=0$.

Fix any $\varepsilon\in(0,1/2)$. On the truncated domain
\(
S(1)\cap\{y\ge \varepsilon\},
\)
the operator
\(
L_k:=H_k(x,y)\partial_{xx}+y\partial_{yy}H_k(x,y)\partial_{xx}+y\partial_{yy}
\)
is uniformly elliptic, with ellipticity constants independent of $k$ because $y\ge\varepsilon$. Therefore standard interior Schauder estimates yield, for every $\beta\in(0,1)$ and every compact subset
\(
K\Subset S(1)\cap\{y\ge \varepsilon\}
\),
a bound of the form
\begin{equation}
\label{eq:interior-schauder}
\|v_k\|_{C^{2,\beta}(K)}\le C(K,\varepsilon,\beta),
\end{equation}
where the constant is independent of $k$.

Near the degenerate boundary $y=0$, one uses the boundary regularity theory for Keldysh-type operators. Since $\|v_k\|_{L^\infty(S(1))}\le 1$ and the coefficients are uniformly close to the constant coefficient operator $u_{xx}+y u_{yy}$, there exists a Hölder exponent $\beta\in(0,1)$ and a constant $C>0$, independent of $k$, such that
\begin{equation}
\label{eq:boundary-holder}
\|v_k\|_{C^{0,\beta}(\overline{S(r)})}\le C
\qquad\text{for every }r\in(0,1).
\end{equation}
More precisely, one may invoke the boundary Hölder estimate for degenerate elliptic equations of Keldysh type, applied after flattening the boundary if necessary, to obtain equicontinuity up to $\{y=0\}$.

From \eqref{eq:boundary-holder} and \eqref{eq:interior-schauder}, Arzel\`a--Ascoli implies that, after passing to a subsequence,
\(
v_k\to v_\infty
\)
locally uniformly in $\overline{S(1)}$, and in fact locally in $C^{0,\beta}$ away from $y=0$. Since the boundary values satisfy $v_k=0$ on $\partial S(1)$, the limit also satisfies
\(
v_\infty=0\) on \(\partial S(1)
\).

\medskip
\noindent
\textbf{STEP 2. Passage to the limit in the equation.}
Because
\[
H_k\to 1 \quad\text{in } C^{0,\alpha}(S(1)),\qquad
f_k\to 0 \quad\text{in } C^{0,\alpha}(S(1)),
\]
the sequence of equations converges, in the viscosity sense, to the constant-coefficient homogeneous equation
\(
v_{xx}+y v_{yy}=0
\).
Hence the limit function $v_\infty$ is a viscosity solution of
\[
\begin{cases}
(v_\infty)_{xx}+y (v_\infty)_{yy}=0 & \text{in } S(1),\\
v_\infty=0 & \text{on } \partial S(1).
\end{cases}
\]
Since the operator is linear and the coefficients are smooth in the interior, standard regularity theory implies that $v_\infty$ is actually a classical solution in $S(1)\cap\{y>0\}$.

\medskip
\noindent
\textbf{STEP 3. Boundary expansion for the limit equation.}
We now invoke the constant-coefficient boundary expansion theorem for solutions of
\(
u_{xx}+y u_{yy}=0
\)
with zero boundary data. Applied to $v_\infty$, this theorem gives the existence of constants $a,b\in\mathbb R$ such that, in a smaller neighborhood of the origin,
\( v_\infty(x,y)=(ax+b)y + R(x,y)
\),
where the remainder satisfies
\(
|R(x,y)|\le C(x^2+y^2+y^2|\log y|) \)
for \((x,y)\in S(1/4)\).
In particular, since the section $S(\rho)$ scales anisotropically as
\(x=O(\rho^{1/2})\) and \(y=O(\rho)\), we obtain
\begin{equation}
\label{eq:small-scale-expansion}
\|v_\infty-(ax+b)y\|_{L^\infty(S(\rho))}
\le C\rho^2|\log\rho|
\qquad\text{for all sufficiently small }\rho.
\end{equation}
Indeed, in $S(\rho)$ the term $y^2|\log y|$ is $O(\rho^2|\log\rho|)$, while the tangential quadratic term is also controlled by the anisotropic geometry of the section.

Now choose $\theta_1\in(0,1/2)$ so small that
\[
C\theta_1^2|\log\theta_1|
\le \frac12\,\theta_1^{\,1+2\gamma}.
\]
This is possible because $2>1+2\gamma$ whenever $\gamma<1/2$, and in the borderline cases one chooses $\theta_1$ small enough so that the logarithmic factor is dominated. Therefore \eqref{eq:small-scale-expansion} implies
\begin{equation}
\label{eq:limit-good}
\inf_{a,b\in\mathbb R}
\|v_\infty-(ax+b)y\|_{L^\infty(S(\theta_1))}
\le \frac12\,\theta_1^{\,1+2\gamma}.
\end{equation}

\medskip
\noindent
\textbf{STEP 4. Transfer of the expansion to $v_k$.}
Since $v_k\to v_\infty$ uniformly on $\overline{S(\theta_1)}$, we have
\(
\|v_k-v_\infty\|_{L^\infty(S(\theta_1))}\to 0
\).
Hence for all sufficiently large $k$,
\begin{equation}
\label{eq:uniform-close}
\|v_k-v_\infty\|_{L^\infty(S(\theta_1))}
\le \frac12\,\theta_1^{\,1+2\gamma}.
\end{equation}
Combining \eqref{eq:limit-good} and \eqref{eq:uniform-close}, we obtain
\[
\inf_{a,b\in\mathbb R}
\|v_k-(ax+b)y\|_{L^\infty(S(\theta_1))}
\le
\|v_k-v_\infty\|_{L^\infty(S(\theta_1))}
+
\inf_{a,b\in\mathbb R}
\|v_\infty-(ax+b)y\|_{L^\infty(S(\theta_1))}
\le \theta_1^{\,1+2\gamma},
\]
for all sufficiently large $k$, contradicting \eqref{z-f-c-eq01}.

This contradiction proves the lemma.
\end{proof}

\subsection*{Pointwise \texorpdfstring{$C^{1,\alpha/2}$}{} estimate at the origin}
	
We now combine the previous lemmas to obtain the full boundary expansion.
	
\begin{proposition}[Pointwise Schauder estimate]\label{prop:pointwise}
Let $\alpha\in(0,1)$, and let $\delta_0,\delta_1,\theta_0,\theta_1,\gamma$ be the constants from Lemmas~\ref{lem:pert}--\ref{lem:zero}, with $\gamma=\alpha/2$ so that $1+2\gamma=1+\alpha$. Assume that
\[
H(0,0)=1,\quad f(0,0)=0, \quad
\|H-1\|_{C^{0,\alpha}(S(1))}\le \min\{\delta_0,\delta_1\},\quad
\|f\|_{C^{0,\alpha}(S(1))}\le \min\{\delta_0,\delta_1\}.
\]
Suppose moreover that the compatibility condition
\(
f(x,0)=H(x,0)\bigl(1+g''(x)\bigr)
\), on \(\F_1\)
holds for some $g\in C^{2,\alpha}$ satisfying
\(g(x)=O(x^{2+\alpha})\)
and 
\(\|g\|_{C^{2,\alpha}}\le \delta
\)
for $\delta>0$ sufficiently small.

Let $w$ be a solution of
\[
\begin{cases}
H w_{xx}+y w_{yy}=f & \text{in } S(1),\\
w(x,0)=\dfrac{x^2}{2}+g(x) & \text{on } \mathcal F_1,\\
w \text{ is bounded on } \mathcal C_1,
\end{cases}
\]
with
\(
\|w\|_{L^\infty(S(1))}\le 1
\).
Then $w\in C^{1,\alpha/2}$ at the origin. More precisely, there exists a linear polynomial
\(
P(x,y)=(ax+b)y
\)
such that
\[
\left|w(x,y)-\left(\frac{x^2}{2}-y\log y+xy+P(x,y)\right)\right|
\le C\,u_0(x,y)^{1+\alpha/2},
\]
where $C$ depends only on the data.

\end{proposition}
	
\begin{proof}
We proceed in three steps.
	
\medskip
\noindent\textbf{STEP 1: Reduction to zero curved boundary perturbation.}
Extend $g$ to a $C^{2,\alpha}$ function on $\overline{S(1)}$ that vanishes on $\C_1$ and equals $g$ on $\F_1$ (for instance, by multiplying $g(x)$ by a smooth cut-off in $y$ that is $1$ near $y=0$ and $0$ for $y\ge c>0$). Set $\tilde{w} = w - g$; then $\tilde{w}$ satisfies
\[
H \tilde{w}_{xx} + y \tilde{w}_{yy} = \tilde{f} \coloneqq f - (H g_{xx} + y g_{yy}),
\]
$\tilde{w}(x,0)=x^2/2$ on $\F_1$, and $\tilde{w}$ is bounded.
Thanks to the compatibility condition, $\tilde{f}(0,0)=0$ and $\|\tilde{f}\|_{C^{0,\alpha}}\le C\delta$.
	
Now we localize to obtain a function that equals $x^2/2$ on the entire boundary $\partial S(1)$. Let $\eta\in C^\infty_0(S(1))$ be a cut-off with $\eta\equiv1$ on $S(1/2)$ and $\eta\equiv0$ near $\C_1$.
Define $\bar{w} = \eta \tilde{w} + (1-\eta)\frac{x^2}{2}$. Then $\bar{w} = \frac{x^2}{2}$ on $\partial S(1)$,
and in $S(1/2)$ we have $\bar{w} = \tilde{w}$. A direct computation shows
\[
H \bar{w}_{xx} + y \bar{w}_{yy} = \bar{f},
\]
with $\bar{f}$ supported outside $S(1/2)$ and satisfying $\|\bar{f}\|_{C^{0,\alpha}}\le C(\|\tilde{f}\|_{C^{0,\alpha}} + \|\tilde{w}\|_{L^\infty}) \le C\delta$. Hence, the hypotheses of Lemma~\ref{lem:pert} are met (after rescaling the domain if necessary) and we obtain
\(
|\bar{w} - u_0| \le \theta_0^{\,1+\alpha}\) in \(S(\theta_0)\),
where $u_0$ is the constant-coefficient solution with boundary data $x^2/2$.
	
\medskip
\noindent\textbf{STEP 2: Extraction of the linear term.}
Because $\bar{w} = \tilde{w} = w - g$ on $S(1/2)$, and $g = O(x^{2+\alpha})$ is already of order $u_0^{1+\alpha/2}$, we have
\[
|w - u_0| \le C u_0^{1+\alpha/2} \quad\text{in } S(\theta_0).
\]
Proposition~\ref{prop:const} gives $u_0 = w_0 + O(y^2|\log y|) = w_0 + O(u_0^{2-\varepsilon})$. Thus
\[
w = w_0 + v, \qquad v = w - w_0 = (u_0 - w_0) + (w - u_0)
\]
satisfies $v(x,0)=g(x)$ on $\F_{\theta_0}$ and $v$ is bounded by $C\theta_0^{1+\alpha}$ on $S(\theta_0)$.
	
Subtract an extension of $g$ again and localize near $\F_{\theta_0}$ to obtain a function $v_0$ that vanishes on $\partial S(\theta_0)$ and solves $H v_{0,xx} + y v_{0,yy} = f_0$ with small $C^{0,\alpha}$ norm. Rescaling back to $S(1)$ via
\[
v_1(x,y) = \theta_0^{-(1+\alpha)} v_0(\theta_0^{1/2}x,\theta_0 y),
\]
we obtain a solution with zero boundary data, $\|v_1\|_{L^\infty(S(1))}\le 1$, where the coefficients satisfy the smallness required in Lemma~\ref{lem:zero}. Hence, there exists a linear polynomial $L_1(x,y)=(a_1 x+b_1)y$ such that
\[
|v_1 - L_1| \le \theta_1^{1+\alpha} \quad\text{in } S(\theta_1).
\]
Reverting the scaling gives a linear polynomial $P_1$ (of the form $(\tilde{a}_1 x+\tilde{b}_1)y$) on $S(\theta_0\theta_1)$ satisfying
\[
|v_0 - P_1| \le \theta_0^{1+\alpha} \theta_1^{1+\alpha} \quad\text{in } S(\theta_0\theta_1).
\]
	
\medskip
\noindent\textbf{STEP 3: Iteration.}
Iterating this procedure with the same parameters $\theta = \theta_0=\theta_1$ (we assume $\theta_0=\theta_1$ after adjusting constants) yields a sequence of linear polynomials $Q_k$ and the estimate
\[
|w - w_0 - Q_k| \le C \theta^{k(1+\alpha)} \quad\text{in } S(\theta^k).
\]
The coefficients of $Q_k$ converge geometrically to a limit linear polynomial $P(x,y)=(a x+b)y$. Rewriting the estimate in terms of $u_0$ gives
\[
|w - w_0 - P| \le C u_0^{1+\alpha/2},
\]
which is the desired $C^{1,\alpha/2}$ bound at the origin.
\end{proof}

\subsection*{Global boundary estimate}
	
Finally, we patch the pointwise estimate to obtain a global result on a half-ball.
	
\begin{corollary}[Global boundary Schauder estimate]\label{cor:global}
Let $B_1^+ := \{(x,y)\in\mathbb{R}^2 : x^2+y^2<1, \, y>0\}$. 
Assume that $H(0,0)=1$, $f \in C^{0,\alpha}(\overline{B_1^+})$, and $g \in C^{2,\alpha}\left(\overline{B_1^+}\cap\{y=0\}\right)$. 
Suppose also that the compatibility condition 
\begin{equation}\label{eq:compatibility_global}
f(x,0) = H(x,0)g''(x) \quad \text{for all } (x,0) \in \overline{B_1^+} \cap \{y=0\}
\end{equation}
is satisfied. If $w$ is a bounded solution of 
\[
\begin{cases}
H w_{xx} + y w_{yy} = f & \text{in } B_1^+, \\
w = g & \text{on } \partial B_1^+ \cap \{y=0\},
\end{cases}
\]
then $w \in C^{1,\alpha/2}(\overline{B_{1/2}^+})$. Moreover, there exists a universal constant $C$ such that
\begin{equation}
\|w\|_{C^{1,\alpha/2}(\overline{B_{1/2}^+})} \le C \left( \|f\|_{C^{0,\alpha}(\overline{B_1^+})} + \|g\|_{C^{2,\alpha}\left(\overline{B_1^+}\cap\{y=0\}\right)} + \|w\|_{L^\infty(B_1^+)} \right).
\end{equation}
\end{corollary}

\begin{proof}
The proof proceeds by reducing the problem to the case of zero boundary data and then applying local estimates via a covering argument.

\medskip
\noindent
\textbf{STEP 1: Reduction to zero boundary data.} 
Let $\bar{g} \in C^{2,\alpha}(\overline{B_1^+})$ be a standard extension of $g$ into the domain such that $\|\bar{g}\|_{C^{2,\alpha}(\overline{B_1^+})} \le C \|g\|_{C^{2,\alpha}}$. Define $\tilde{w} := w - \bar{g}$. Then $\tilde{w}$ satisfies $\tilde{w} = 0$ on the flat boundary $\{y=0\}$, and it solves the equation
\( H \tilde{w}_{xx} + y \tilde{w}_{yy} = \tilde{f}\) in \(B_1^+\), where $\tilde{f} := f - (H \bar{g}_{xx} + y \bar{g}_{yy})$. 
By the compatibility condition \eqref{eq:compatibility_global}, we have at $y=0$:
\(
\tilde{f}(x,0) = f(x,0) - H(x,0)g''(x) = 0
\).
The source term $\tilde{f}$ remains in $C^{0,\alpha}(\overline{B_1^+})$ with $\|\tilde{f}\|_{C^{0,\alpha}} \le C (\|f\|_{C^{0,\alpha}} + \|g\|_{C^{2,\alpha}})$.

\medskip
\noindent
\textbf{STEP 2: Local estimates at the degenerate boundary.} 
Let $p_0 = (x_0, 0) \in \overline{B_{1/2}^+} \cap \{y=0\}$. Note that for any such point, the Euclidean distance to the curved part of the boundary $\partial B_1^+ \cap \{y>0\}$ is at least $1/2$. Consequently, there exists a universal radius $r_0 > 0$ such that the intrinsic section (or neighborhood) $S_{r_0}(p_0)$ centered at $p_0$ is strictly contained in $B_1^+$.

Applying the local pointwise regularity (Proposition~\ref{prop:pointwise}) at $p_0$ to the function $\tilde{w}$, and using the fact that $\tilde{f}(x,0)=0$, we obtain a linear polynomial $P_{p_0}(x,y) = (a_{p_0}(x-x_0) + b_{p_0})y$ such that
\[
|\tilde{w}(x,y) - P_{p_0}(x,y)| \le C \left( \|\tilde{w}\|_{L^\infty} + \|\tilde{f}\|_{C^{0,\alpha}} \right) u_0(x-x_0, y)^{1+\alpha/2},
\]
where $u_0$ is the model function. This implies a local $C^{1,\alpha/2}$ bound for $\tilde{w}$ in a universal neighborhood $\mathcal{N}(p_0)$ of the form:
\[
\|\tilde{w}\|_{C^{1,\alpha/2}\left(\overline{\mathcal{N}(p_0)\cap B_1^+}\right)} \le C \left( \|w\|_{L^\infty} + \|f\|_{C^{0,\alpha}} + \|g\|_{C^{2,\alpha}} \right).
\]

\medskip
\noindent
\textbf{STEP 3: Interior estimates.} 
For the region $D_\eta := \overline{B_{1/2}^+} \cap \{y \ge \eta\}$ where $\eta > 0$ is a small universal constant, the operator $\mathcal{L}u = H u_{xx} + y u_{yy}$ is uniformly elliptic since $y \ge \eta$. Classical interior Schauder estimates for elliptic equations imply that 
\[
\|\tilde{w}\|_{C^{1,\alpha/2}(D_\eta)} \le C_\eta \left( \|\tilde{w}\|_{L^\infty(B_1^+)} + \|\tilde{f}\|_{C^{0,\alpha}(\overline{B_1^+})} \right).
\]

\medskip
\noindent
\textbf{STEP 4: Finite covering and global bound.} 
The compact set $\overline{B_{1/2}^+}$ is covered by a finite number of boundary neighborhoods from STEP 2 and the interior region $D_\eta$ from STEP 3. Since each local $C^{1,\alpha/2}$ norm is bounded by the right-hand side of the desired estimate, and the number of such neighborhoods is finite (depending only on the geometry of $B_1^+$), we conclude that 
\[
\|\tilde{w}\|_{C^{1,\alpha/2}(\overline{B_{1/2}^+})} \le C \left( \|w\|_{L^\infty} + \|f\|_{C^{0,\alpha}} + \|g\|_{C^{2,\alpha}} \right).
\]
Recalling $w = \tilde{w} + \bar{g}$, and noting that $\|\bar{g}\|_{C^{1,\alpha/2}(\overline{B_{1/2}^+})} \le C \|g\|_{C^{2,\alpha}}$, the result follows.
\end{proof}

\section{Some calculus lemmas}
\label{sec:calc}

The lemmas are organized hierarchically. Lemma~\ref{lem:ode-C1} establishes that $f'$ extends continuously to the origin with $f'(0)=0$. The subsequent H\"older estimates in Lemmas~\ref{lem:ode-optimal} and \ref{lem:ode-generalized} build upon this property. Lemma~\ref{lem:local2global} is self-contained and provides a local-to-global tool for proving H\"older bounds near the origin.

\begin{lemma}[$C^1$ regularity at the origin]\label{lem:ode-C1}
Let $f:[0,\infty)\to\mathbb{R}$ satisfy the following conditions:
\begin{itemize}
\item $f(0)=0$;
\item $f\in C^{1}_{\mathrm{loc}}((0,\infty))$ and $f'$ is absolutely continuous on every compact subinterval of $(0,\infty)$;
\item $f''$ which exists a.e. satisfies $|f''(x)|\le \frac{C_*}{x}$ for a.e. $x\in(0,\delta_1]$ and some constant $C_*>0$;
\item $|f(x)|\le C_0\,x^{1+\alpha}$ for all $x\in[0,\delta]$ and some constants $C_0>0$ and $\alpha\in(0,1)$.
\end{itemize}
Then $\lim_{x\to0^+}f'(x)=0$. In particular, defining $f'(0)=0$, we have $f\in C^1([0,\infty))$.
\end{lemma}

\begin{proof}
Let $\tilde\delta=\min\{\delta,\delta_1\}>0$. Since $f'$ is absolutely continuous on every compact subinterval of $(0,\infty)$, the fundamental theorem of calculus holds. For any $0<b<x\le\tilde\delta$, we have
\[
|f'(x)-f'(b)|\le \int_b^x |f''(t)|\,dt \le C_*\int_b^x \frac{dt}{t}=C_*\log\left(\frac{x}{b}\right).
\]
Assume for contradiction that $\limsup_{x\to0^+}f'(x)>0$. Then there exist $\varepsilon>0$ and a sequence $b_k\searrow0$ such that $f'(b_k)\ge\varepsilon$. Set $c=\varepsilon/2>0$ and choose $b=b_k$ sufficiently small so that $b e^{c/C_*}\le\tilde\delta$. For any $x\in\left[b, b e^{c/C_*}\right]$, we estimate
\[
f'(x)\ge f'(b)-C_*\log\left(\frac{x}{b}\right)\ge 2c-c = c>0.
\]
Integrating this inequality over $\left[b, b e^{c/C_*}\right]$ yields
\[
f\left(b e^{c/C_*}\right)-f(b) \ge c\left(b e^{c/C_*}-b\right) = c b\left(e^{c/C_*}-1\right).
\]
On the other hand, the growth condition on $f$ implies
\[
\left|f\left(b e^{c/C_*}\right)-f(b)\right| \le C_0\left(\left(b e^{c/C_*}\right)^{1+\alpha}+b^{1+\alpha}\right) = C_0 b^{1+\alpha}\left(e^{(1+\alpha)c/C_*}+1\right).
\]
Combining these bounds, dividing by $b$, and taking the limit as $b\to0^+$ yields $c\left(e^{c/C_*}-1\right)\le0$, which is a contradiction since $c>0$ and $C_*>0$. 

Hence, $\limsup_{x\to0^+}f'(x)\le0$. A symmetric argument using a sequence where $f'(b_k)\le -\varepsilon$ establishes that $\liminf_{x\to0^+}f'(x)\ge0$. We conclude that $\lim_{x\to0^+}f'(x)=0$.
\end{proof}

\begin{lemma}[Improved H\"older regularity]\label{lem:ode-optimal}
Let $f$ satisfy the hypotheses of Lemma~\ref{lem:ode-C1} with the bound on $f''$ replaced by $|f''(x)|\le \frac{C_2}{x}$ for a.e.\ $x\in(0,\delta]$ and some constant $C_2>0$, and assume $|f(x)|\le C_0 x^{1+\alpha}$ on $[0,\delta]$. Then, after possibly shrinking $\delta$, there exists a constant $C>0$ depending only on $C_0,C_2$, and $\alpha$ such that
\[
|f'(x)|\le C\,x^{\alpha/2}\qquad\text{for all } x\in[0,\delta].
\]
In particular, $f'$ is $(\alpha/2)$-H\"older continuous at the origin.
\end{lemma}

\begin{proof}
By Lemma~\ref{lem:ode-C1}, $\lim_{x\to0^+}f'(x)=0$, so $f'$ extends continuously to the origin with $f'(0)=0$. It therefore suffices to prove the bound $|f'(x)|\le C x^{\alpha/2}$ for all sufficiently small $x>0$.

Assume without loss of generality that both the growth condition and the bound on $f''$ hold on $(0,\delta]$. Choose $c>0$ to be fixed later. Suppose there exists $b\in(0,\delta]$ such that $f'(b)\ge 2c b^{\alpha/2}$. Define $T = \frac{c}{C_2} b^{\alpha/2}$. For $x\in[b, b e^{T}]$, assuming $b e^{T}\le\delta$ (which holds for small $b$ since $T\to0$ as $b\to0$), we have
\[
f'(x)\ge f'(b)-C_2\log\left(\frac{x}{b}\right) \ge 2c b^{\alpha/2} - C_2 T = c b^{\alpha/2} >0.
\]
Integrating this inequality over $[b, b e^T]$ yields
\[
f(b e^{T})-f(b) \ge c b^{\alpha/2} (b e^{T}-b) \ge c b^{1+\alpha/2} T = \frac{c^2}{C_2} b^{1+\alpha}.
\]
Using the inequality $e^T \le e^1$ for small $b$, the growth hypothesis gives
\[
|f(b e^{T})-f(b)| \le C_0 b^{1+\alpha} \left(e^{(1+\alpha)T}+1\right) \le C_0\left( e^{1+\alpha}+1\right) b^{1+\alpha}.
\]
Thus, we obtain the inequality $\frac{c^2}{C_2} \le C_0(e^{1+\alpha}+1)$. Choosing $c > \sqrt{C_0C_2(e^{1+\alpha}+1)}$ makes this impossible. Thus, no such $b$ exists, and we obtain $f'(x) < 2c x^{\alpha/2}$ for all sufficiently small $x>0$. The symmetric bound $f'(x) > -2c x^{\alpha/2}$ follows analogously. 

Since $f'(0)=0$, this implies $|f'(x)-f'(0)| \le C x^{\alpha/2}$ for all $x\in[0,\delta]$, completing the proof.
\end{proof}

\begin{lemma}[Local-to-global $C^{\alpha}$ regularity]\label{lem:local2global}
Let $g:[0,\infty)\to\mathbb{R}$ be continuous with $g(0)=0$, and assume $g\in C^{\alpha}_{\mathrm{loc}}((0,\infty))$ for some $\alpha\in(0,1)$. Define the local oscillation at scale $x$ by
\[
M(x)\coloneqq \sup_{y\,\in\,[x/2,\,2x],\;y\neq x} \frac{|g(y)-g(x)|}{|y-x|^{\alpha}},\qquad x>0.
\]
If $\lim_{x\to0^+}M(x)=L<\infty$, then $g\in C^{\alpha}([0,\delta])$ for some $\delta>0$. More precisely, for every $\varepsilon>0$ there exists $\delta>0$ such that for all $0\le a<b\le\delta$,
\[
|g(a)-g(b)|\le \frac{2^\alpha(L+\varepsilon)}{1-2^{-\alpha}}\;|a-b|^{\alpha}.
\]
\end{lemma}

\begin{proof}
Fix $\varepsilon>0$ and choose $\delta>0$ so that $M(x)\le L+\varepsilon$ for all $x\in(0,\delta]$. Let $0 < a < b \le \delta$. We construct a partition from $a$ to $b$ as follows: set $a_0=a$, and define $a_{k+1}=2a_k$ for $k \ge 0$ until the first index $n$ such that $a_n \ge b$. We then set $a_{n+1}=b$. The intervals satisfy $|a_{k+1}-a_k| \le a_k$ for all $k \le n$, and for the final step we have $|b-a_n| \le a_n$.

By the triangle inequality,
\[
|g(b)-g(a)|\le\sum_{k=0}^{n-1}|g(a_{k+1})-g(a_k)|+|g(b)-g(a_n)|.
\]
For each $k\le n-1$, we have $a_k\le a_{k+1}\le 2a_k$. Since $a_k\in(0,\delta]$, the definition of $M(a_k)$ yields
\[
|g(a_{k+1})-g(a_k)|\le (L+\varepsilon)\,|a_{k+1}-a_k|^{\alpha} \le (L+\varepsilon)\,a_k^{\alpha}.
\]
For the final term, since $a_n \le b \le 2a_n$, both $b$ and $a_n$ belong to $[a_n/2, 2a_n]$. Since $a_n \le \delta$, we similarly obtain
\[
|g(b)-g(a_n)|\le (L+\varepsilon)\,|b-a_n|^{\alpha}\le (L+\varepsilon)\,a_n^{\alpha}.
\]
Summing these estimates gives
\[
|g(b)-g(a)|\le (L+\varepsilon)\sum_{k=0}^{n}a_k^{\alpha}.
\]
Since $a_k = 2^{-(n-k)}a_n$ for $k < n$ and $a_n \le 2a_{n-1}$, we bound the sum by
\[
\sum_{k=0}^{n}a_k^{\alpha}\le \frac{a_n^{\alpha}}{1-2^{-\alpha}}.
\]
To relate $a_n$ to $|b-a|$, we observe that if $n=0$, then $a_n = b \le 2(b-a)$ (since $a = a_0 < b \le 2a_0$ implies $b-a = b-a_0 \ge b/2$). If $n \ge 1$, we have $b - a \ge a_n - a_{n-1} = a_{n-1} = a_n/2$, which implies $a_n \le 2(b-a)$. Thus, in all cases, $a_n^{\alpha}\le 2^{\alpha}|b-a|^{\alpha}$. Substituting this back, we obtain
\[
|g(b)-g(a)|\le \frac{2^\alpha(L+\varepsilon)}{1-2^{-\alpha}}\;|b-a|^{\alpha}.
\]
By continuity of $g$ at the origin with $g(0)=0$, this inequality extends to $a=0$. Thus, $g \in C^{\alpha}([0,\delta])$.
\end{proof}

\begin{lemma}[Generalized improved H\"older regularity]\label{lem:ode-generalized}
Let $f$ satisfy the hypotheses of Lemma~\ref{lem:ode-C1} with the bound on $f''$ replaced by $|f''(x)|\le C_\beta x^{-1+\beta}$ for a.e.\ $x\in(0,\delta]$, some $\beta\in[0,1]$, and some constant $C_\beta>0$. Assume $|f(x)|\le C_0 x^{1+\alpha}$ on $[0,\delta]$ with $\alpha>\beta$. Then, after possibly shrinking $\delta$, there exists a constant $C>0$ depending only on $C_0,C_\beta,\alpha$, and $\beta$ such that
\[
|f'(x)|\le C\,x^{\frac{\alpha+\beta}{2}}\qquad\text{for all } x\in[0,\delta].
\]
In particular, $f'$ is $\frac{\alpha+\beta}{2}$-H\"older continuous at the origin.
\end{lemma}

\begin{proof}
By Lemma~\ref{lem:ode-C1}, $\lim_{x\to0^+}f'(x)=0$, so $f'$ extends continuously to $0$ with $f'(0)=0$. We only need to show the estimate $|f'(x)|\le C x^{(\alpha+\beta)/2}$ for all sufficiently small $x>0$.

Let $\delta_0>0$ be such that both the growth condition and the bound on $f''$ hold on $(0,\delta_0]$. Fix $\varepsilon\in(0,\delta_0/2]$ and set $t=1-\delta_1$ with $\delta_1\in(0,1/2]$ to be chosen later. For any $y\in[t\varepsilon,\varepsilon]$, the bound on $f''$ implies
\[
|f'(\varepsilon)-f'(y)|\le \int_y^\varepsilon C_\beta s^{-1+\beta}\,ds.
\]
If $\beta>0$, the integral evaluates to $\frac{C_\beta}{\beta}(\varepsilon^\beta-y^\beta)$. If $\beta=0$, the integral evaluates to $C_0\log(\varepsilon/y)$ (where we denote the coefficient as $C_0$ for $\beta=0$). In either case, setting $y=t\varepsilon$, we obtain
\[
|f'(\varepsilon)-f'(t\varepsilon)|\le C_1\varepsilon^\beta\delta_1,
\]
where $C_1>0$ depends only on $C_\beta$ and $\beta$. Indeed, for $\beta>0$, we have $\varepsilon^\beta-(t\varepsilon)^\beta=\varepsilon^\beta(1-(1-\delta_1)^\beta)\le \varepsilon^\beta\max(1,\beta)\delta_1$, while for $\beta=0$, we have $\log(1/t) = -\log(1-\delta_1)\le 2\delta_1$ for all $\delta_1\le 1/2$.

Without loss of generality, assume $f'(\varepsilon)\ge 0$ (the case $f'(\varepsilon)<0$ follows symmetrically by considering $-f$). For all $x\in[t\varepsilon,\varepsilon]$, we have
\[
f'(x)\ge f'(\varepsilon)-C_1\varepsilon^\beta\delta_1.
\]
Integrating this inequality over $[t\varepsilon,\varepsilon]$ yields
\[
f(\varepsilon)-f(t\varepsilon)=\int_{t\varepsilon}^\varepsilon f'(x)\,dx \ge (1-t)\varepsilon\left(f'(\varepsilon)-C_1\varepsilon^\beta\delta_1\right) = \delta_1\varepsilon\left(f'(\varepsilon)-C_1\varepsilon^\beta\delta_1\right).
\]
Conversely, the growth condition on $f$ yields
\[
|f(\varepsilon)-f(t\varepsilon)|\le C_0\left(\varepsilon^{1+\alpha}+(t\varepsilon)^{1+\alpha}\right) \le 2C_0\varepsilon^{1+\alpha}.
\]
Combining these two estimates, we obtain
\[
\delta_1\varepsilon\left(f'(\varepsilon)-C_1\varepsilon^\beta\delta_1\right)\le 2C_0\varepsilon^{1+\alpha},
\]
which simplifies to
\[
f'(\varepsilon)\le C_1\varepsilon^\beta\delta_1 + 2C_0\varepsilon^\alpha\delta_1^{-1}.
\]
We now minimize the right-hand side with respect to $\delta_1>0$. The minimum of the function $h(\delta_1) = C_1\varepsilon^\beta\delta_1 + 2C_0\varepsilon^\alpha\delta_1^{-1}$ occurs at
\[
\delta_1 = \sqrt{\frac{2C_0}{C_1}}\,\varepsilon^{(\alpha-\beta)/2}.
\]
Since $\alpha>\beta$, for sufficiently small $\varepsilon$ we have $\delta_1\le 1/2$, rendering this choice admissible. Substituting this optimal $\delta_1$ back yields
\[
f'(\varepsilon)\le 2\sqrt{2C_0C_1}\,\varepsilon^{(\alpha+\beta)/2}.
\]
Applying the symmetric argument to the case $f'(\varepsilon)<0$ yields the same bound for $|f'(\varepsilon)|$. Since $f'(0)=0$, this implies $|f'(x)-f'(0)|\le C x^{(\alpha+\beta)/2}$ for all $x\in [0, \delta]$, which completes the proof.
\end{proof}

\medskip
	
\paragraph{\bf{Acknowledgements}}
M. Bayrami, R. Seyyedali and M. Talebi were supported by a grant from IPM.

\end{document}